\documentclass[12pt,onecolumn]{IEEEtran}
\usepackage{amssymb}
\usepackage{graphicx, epstopdf}
\usepackage{epstopdf}
\usepackage[tbtags]{amsmath}
\usepackage{amsfonts}
\usepackage{mathrsfs}
\usepackage{cite,url}
\usepackage{upquote}
\usepackage{pifont}
\usepackage{amsmath}
\usepackage{mathtools}

\usepackage{units}

\newtheorem{thm}{Theorem}
\newtheorem{theorem}[thm]{Theorem}
\newtheorem{lemma}[thm]{Lemma}
\newtheorem{definition}[thm]{Definition}
\newtheorem{proposition}[thm]{Proposition}
\newtheorem{corollary}[thm]{Corollary}

\newtheorem{example}{Example}

\newtheorem{remark}{Remark}
\newtheorem{proof}{Proof}

\begin{document}
	\title {Projection Theorems and Estimating Equations for Power-Law Models}
	\author{
		\IEEEauthorblockN{Atin Gayen and M. Ashok Kumar}\\
		\IEEEauthorblockA{\small{
				Discipline of Mathematics\\
				Indian Institute of Technology Palakkad \\
				Kerala 678557, India\\
				Email: atinfordst@gmail.com; ashokm@iitpkd.ac.in}}
	}
	
	\maketitle

\begin{abstract}
We extend projection theorems concerning Hellinger and Jones et al. divergences to the continuous case. These projection theorems reduce certain estimation problems on generalized exponential models to linear problems. We introduce the notion of regularity for generalized exponential models and show that the projection theorems in this case are similar to the ones in discrete and canonical case. We also apply these ideas to solve certain estimation problems concerning Student and Cauchy distributions.
\end{abstract}

\begin{keywords}
	\noindent Cauchy distribution, Divergence, Estimating equation, Power-law family, Projection theorem, Student distribution.
\end{keywords}

\section{Introduction}
\label{1sec:introduction}
Divergence is a non-negative extended real-valued function $D$ defined for any pair of probability distributions $(p,q)$ satisfying $D(p,q) = 0$ if and only if $p = q$. Minimum divergence (or distance) method is popular in statistical inference because of its many desirable properties including robustness and efficiency \cite{BasuSP11B,Pardo06B}. Minimization of information divergence ($I$-divergence) or relative entropy is closely related to the maximum likelihood estimation (MLE) \cite[Lem.~3.1]{CsiszarS04B}. MLE is not a preferred method when the data is contaminated by outliers. However, $I$-divergence can be extended by replacing the logarithmic function by some power function to produce divergences that are robust to outliers \cite{BasuHHJ98J, JonesHHB01J, CichockiA10J}. In this paper, we consider three such families of divergences that are well-known in the context of robust statistics. They are defined as follows.

Let $p$ and $q$ be probability distributions having a common 
support $\mathbb{S}\subseteq \mathbb{R}^d$.
Let $\alpha>0, \alpha\neq 1$.

\begin{enumerate}

	\item[(a)] The Hellinger divergence $D_\alpha$ (also known as 
	Cressie-Read power divergence \cite{CressieR84J} or  power divergence \cite{PatraMPB13J} and, up-to a monotone function, same as R\'enyi divergence \cite{Renyi61J}):
	\begin{eqnarray}
	\label{1defn:D_alpha_divergence}
	D_\alpha(p, q) := 
	\frac{1}{\alpha-1}\left(\int
	p(\textbf{{x}})^\alpha q(\textbf{{x}})^{1-\alpha} 
	d\textbf{{x}} - 1\right).
	\end{eqnarray}
	
	\item[(b)] The Basu et al. divergence $B_\alpha$ (also known as power pseudo-distance \cite{BroniatowskiTV12J, BroniatowskiV12J}, density power divergence \cite{BasuHHJ98J, Naudts04J, Kanamori14J}, $\beta$-divergence \cite{MinamiE02J}):
	\begin{equation}
	\label{1defn:B_alpha_divergence}
	B_{\alpha}(p, q) := \frac{\alpha}{1 - \alpha}\int
	p (\textbf{{x}})q(\textbf{{x}})^{\alpha -1}d\textbf{{x}}
	- \frac{1}{1 - \alpha}\int p(\textbf{{x}})^{\alpha}d\textbf{{x}} 
	+ \int q(\textbf{{x}})^{\alpha}d\textbf{{x}}.
	\end{equation}

	\item[(c)] The Jones et al. divergence $\mathscr{I}_\alpha$ \cite{Sundaresan02ISIT, LutwakYZ05J, Sundaresan07J, FujisawaE08J} (also known as relative $\alpha$-entropy \cite{KumarS15J1, KumarS15J2},  R\'enyi pseudo-distance \cite{BroniatowskiV12J, BroniatowskiTV12J}, logarithmic density power divergence \cite{MajiGB16J},
	projective power divergence \cite{EguchiKK11J}, $\gamma$-divergence \cite{FujisawaE08J, CichockiA10J}):
	\begin{equation}
	\label{1defn:I_alpha_divergence}
	\mathscr{I}_\alpha(p, q):= 
	\frac{\alpha}{1 - \alpha}\ln\int 
	p (\textbf{{x}})q(\textbf{{x}})^{\alpha -1}d\textbf{{x}}
	- \frac{1}{1 - \alpha}\ln\int p(\textbf{{x}})^{\alpha}d\textbf{{x}} + \ln\int  q(\textbf{{x}})^{\alpha}d\textbf{{x}}.
	\end{equation}
	
\end{enumerate}
Throughout the paper we assume that all the integrals are well defined over $\mathbb{S}$. The integrals are with respect to the Lebesgue measure on 
$\mathbb{R}^d$ in the continuous case and with respect to the counting measure in the discrete case. Many well-known divergences fall in the above classes of divergences. For example, Chi-square divergence,
Bhattacharyya distance
\cite{Bhattacharyya43J} and Hellinger distance \cite{Beran77J} fall in the $D_\alpha$-divergence class; Cauchy-Schwarz divergence \cite[Eq. (2.90)]{Principe10B} falls in the  $\mathscr{I}_\alpha$-divergence class; squared Euclidean distance falls in the 
$B_\alpha$-divergence class \cite{BasuHHJ98J}.
All three classes of divergences coincide with the $I$-divergence as $\alpha\rightarrow 1$ \cite{CichockiA10J}, where
\begin{align}
\label{1eqn:kullback-leibler}
I(p, q) :=\int p(\textbf{{x}})\ln\frac{p(\textbf{{x}})}
{q(\textbf{{x}})}d\textbf{{x}}.
\end{align}
In this sense, each of these three classes of divergences can be regarded as a generalization of $I$-divergence.

$D_\alpha$-divergences also arise as generalized cut-off rates in information theory \cite{Csiszar95J1}. $B_\alpha$-divergences belong to the Bregman class which is characterized by transitive projection rules \cite[Eq.~(3.2), Theorem 3]{Csiszar91J}, \cite[Example 3]{Kanamori14J}. $\mathscr{I}_\alpha$-divergences (for $\alpha < 1$) arise in information theory as a redundancy measure in the mismatched cases of guessing \cite{Sundaresan02ISIT}, source coding \cite{KumarS15J2} and encoding of tasks \cite{BunteL14J}. The three classes of  divergences are closely related to robust estimation, for $\alpha>1$ in case of $B_\alpha$ and $\mathscr{I}_\alpha$, and $\alpha<1$ in case of $D_\alpha$, as we shall see now.

Let $\textbf{X}_1,\ldots, \textbf{X}_n$ be an independent and identically distributed (i.i.d.) sample drawn from an unknown distribution $p$. Let us suppose that $p$ is a member of a parametric family of probability
distributions $\Pi = \{p_\theta : \theta\in\Theta\}$, where $\Theta$ is an open subset of $\mathbb{R}^k$ and all $p_\theta$ have a common support $\mathbb{S}\subseteq \mathbb{R}^d$. MLE picks the distribution
$p_{\theta^*}\in \Pi$ that would have most likely caused the sample.
MLE solves the so-called score equation or estimating equation for $\theta$, given by
\begin{eqnarray}
\label{1eqn:score_equation_mle_in_terms_of_sample}
\frac{1}{n}\sum\limits_{j=1}^n s(\textbf{X}_j; \theta) = 0,
\end{eqnarray}
where $s(\textbf{{x}};\theta) := \nabla\ln p_\theta(\textbf{{x}})$, called the score function and $\nabla$ stands for gradient with respect to $\theta$. 
In the discrete case, the above equation can be re-written as
\begin{equation}
\label{1eqn:score_equation_mle_discrete_case}
\sum\limits_{\textbf{{x}}\in \mathbb{S}} p_n(\textbf{{x}})
s(\textbf{{x}};\theta) = 0,
\end{equation}
where $p_n$ is the empirical measure of the sample $\textbf{X}_1,\dots, \textbf{X}_n$.

Let us now suppose that the sample $\textbf{X}_1,\ldots, \textbf{X}_n$
is from a mixture distribution of the
form $p_\epsilon = (1-\epsilon)p + \epsilon\delta$, $\epsilon\in [0,1)$, 
where $p$ is supposed to be a member of $\Pi = \{p_\theta : \theta\in\Theta\}$; $p$ is regarded as
the distribution of ``true'' samples and $\delta$, that of outliers. Assume that support of $\delta$ is a subset of $\mathbb{S}$.
While the usual MLE tries to fit a distribution for $p_\epsilon$, robust estimation tries to fit for $p_\theta$. Throughout the paper, the above will be the setup in all the estimation problems, unless otherwise stated. Thus for robust estimation, one needs to modify
the estimating equation so that the effect of outliers is 
down-weighted. The following modified estimating equation, referred as generalized Hellinger estimating equation, was proposed in \cite{BasuBC97J}, where the score function was weighted by $p_n(\textbf{x})^\alpha p_\theta
(\textbf{x})^{1-\alpha}$ instead of $p_n(\textbf{x})$ in
(\ref{1eqn:score_equation_mle_discrete_case}):
\begin{eqnarray}
\label{1eqn:score_equation_for_D_alpha_discrete_case}
\sum\limits_{\textbf{{x}}\in \mathbb{S}} p_n(\textbf{{x}})^\alpha p_\theta (\textbf{{x}})^{1-\alpha}
s(\textbf{{x}};\theta) = 0,
\end{eqnarray}
where $\alpha\in (0,1)$.
This was proposed based on the following intuition. If $\textbf{{x}}$ is an outlier, then $p_n(\textbf{x})^\alpha p_\theta(\textbf{x})^{1-\alpha}$ will be smaller than $p_n(\textbf{x})$
for sufficiently smaller values of $\alpha$.
Hence the terms corresponding to outliers 
in (\ref{1eqn:score_equation_for_D_alpha_discrete_case}) are down-weighted (c.f. \cite[Section 4.3]{BasuSP11B} and the references therein).

Notice that (\ref{1eqn:score_equation_for_D_alpha_discrete_case}) does not extend to continuous case due
to the appearance of $p_n^\alpha$. However in literature,
to avoid this technical difficulty, some smoothing techniques
such as kernel density estimation \cite[Section 3]{Beran77J},
\cite[Section 3.1,~3.2.1]{BasuSP11B}, Basu-Lindsay
approach \cite[Section 3.5]{BasuSP11B}, Cao et al. modified approach \cite{CaoCF95J} and so on are
used for a continuous estimate of $p_n$. The resulting estimating equation is of the form
\begin{equation}
\label{1eqn:score_equation_for_D_alpha}
\int \widetilde{p}_n (\textbf{{x}})^\alpha p_\theta(\textbf{{x}})^{1-\alpha} s(\textbf{{x}};\theta)
d\textbf{{x}} = 0,
\end{equation}
where $\widetilde{p}_n$ is some continuous estimate of $p_n$. To avoid this smoothing, Broniatowski et al. derived a duality technique where one first finds a dual representation for the Hellinger distance and then minimizes the empirical estimate of this dual representation to find the estimator. The empirical estimate of this dual representation does not require any smoothing.
See \cite{BroniatowskiK09J, TomaB11J, BroniatowskiV12J, BroniatowskiTV12J, Broniatowski14J, Mohamad18J} for details.

The following estimating equation, where the score function is weighted by power of model density
and equated to its hypothetical one, was proposed by Basu et al. \cite{BasuHHJ98J}:
\begin{eqnarray}
\label{1eqn:score_equation_B_alpha}
\frac{1}{n}\sum\limits_{j=1}^n p_{\theta}(\textbf{X}_j)^{\alpha-1} 
s(\textbf{X}_j,\theta) = \int p_{\theta}(\textbf{{x}})^{\alpha} s(\textbf{{x}},\theta)d\textbf{{x}},
\end{eqnarray}
where $\alpha>1$. Motivated by the works of Field and Smith \cite{FieldS94J} and Windham \cite{Windham95J},
an alternate estimating equation, 
where
the weights are further normalized,
was proposed by Jones et al. \cite{JonesHHB01J}:
\begin{eqnarray}
\label{1eqn:score_equation_I_alpha}
\dfrac{\frac{1}{n}\sum\limits_{j=1}^n  p_\theta(\textbf{X}_j)^{\alpha-1}
	s(\textbf{X}_j;\theta)}
{\frac{1}{n}\sum\limits_{j=1}^n  p_\theta(\textbf{X}_j)^{\alpha-1}}
= \dfrac{\int p_\theta(\textbf{{x}})^\alpha s(\textbf{{x}};\theta)d\textbf{{x}}}
{\int p_\theta(\textbf{{x}})^\alpha d\textbf{{x}}},
\end{eqnarray} 
where $\alpha>1$. Notice that (\ref{1eqn:score_equation_B_alpha})
and (\ref{1eqn:score_equation_I_alpha}) do not require the use of empirical
distribution. Hence no smoothing is required in these cases.
The estimators of (\ref{1eqn:score_equation_for_D_alpha}),
(\ref{1eqn:score_equation_B_alpha}) and
(\ref{1eqn:score_equation_I_alpha}) are
consistent and asymptotically normal \cite[Theorem 2]{BasuHHJ98J}, 
\cite[Section 3]{JonesHHB01J}, \cite[Theorem 3]{Beran77J}.
They also satisfy two invariance properties,
one when the underlying model is re-parameterized by a one-one
function of the parameter 
\cite[Section 3.4]{BasuHHJ98J}, and the other when the samples 
are replaced by some of their linear transformation \cite[Theorem 3.1]{TamuraB86J}, 
\cite[Section 3.4]{BasuHHJ98J}.
They
coincide with the ML-estimating equation (\ref{1eqn:score_equation_mle_in_terms_of_sample}) when $\alpha=1$ under the condition that
$\int p_\theta(\textbf{{x}})s(\textbf{{x}};\theta)d\textbf{{x}} = 0$.
The estimating equations (\ref{1eqn:score_equation_mle_in_terms_of_sample}), (\ref{1eqn:score_equation_for_D_alpha}), (\ref{1eqn:score_equation_B_alpha})
and (\ref{1eqn:score_equation_I_alpha}) are, respectively, associated with the divergences in (\ref{1eqn:kullback-leibler}), (\ref{1defn:D_alpha_divergence}), (\ref{1defn:B_alpha_divergence}), and (\ref{1defn:I_alpha_divergence}) in a sense that will be made clear in the following.

Observe that the estimating equations (\ref{1eqn:score_equation_mle_in_terms_of_sample}), (\ref{1eqn:score_equation_for_D_alpha}), (\ref{1eqn:score_equation_B_alpha}),
and (\ref{1eqn:score_equation_I_alpha}) are implications of the first order optimality condition of maximizing,
respectively, the usual log-likelihood function 
\begin{eqnarray}
\label{1eqn:log_likelihood_function}
L(\theta) := \frac{1}{n}\sum\limits_{j=1}^n\ln p_\theta(\textbf{X}_j),
\end{eqnarray}
and the following generalized
likelihood functions
\begin{align}
\label{1eqn:likelihood_function_for_D_alpha}
L_1^{(\alpha)}(\theta) &:= \dfrac{1}{1 - \alpha}
\int \widetilde{p}_n(\textbf{{x}})^\alpha p_\theta (\textbf{{x}})^{1-\alpha} d\textbf{{x}},\\
\label{1eqn:likelihood_function_for_B_alpha}
L_2^{(\alpha)}(\theta) &:= \dfrac{1}{n}\sum\limits_{j=1}^n\left[\dfrac{\alpha p_\theta(\textbf{X}_j)
	^{\alpha-1}-1}{\alpha-1}\right] - \int p_\theta(\textbf{{x}})^\alpha d\textbf{{x}},\\
\label{1eqn:likelihood_function_for_I_alpha}
L_3^{(\alpha)}(\theta) &:= \dfrac{\alpha}{\alpha-1}\ln\left[\dfrac{1}{n}\sum\limits_{j=1}^n p_\theta(\textbf{X}_j)^{\alpha-1}\right] - \ln\left[\int p_\theta(\textbf{{x}})^\alpha 
d\textbf{{x}}\right].
\end{align}
The above likelihood functions (\ref{1eqn:likelihood_function_for_D_alpha}), (\ref{1eqn:likelihood_function_for_B_alpha}) and (\ref{1eqn:likelihood_function_for_I_alpha}) are not defined for $\alpha=1$. However it can be shown that they all coincide with $L(\theta)$ 
as $\alpha\to 1$.

It is easy to see that the probability  distribution  $p_\theta$ that maximizes (\ref{1eqn:likelihood_function_for_D_alpha}),
(\ref{1eqn:log_likelihood_function}), (\ref{1eqn:likelihood_function_for_B_alpha}) or (\ref{1eqn:likelihood_function_for_I_alpha})  is same as,
respectively, the one that minimizes
$D_\alpha(\widetilde{p}_n,p_\theta)$ or the empirical estimates of 
$I(p, p_\theta)$, $B_\alpha(p_\epsilon, p_\theta)$ or $\mathscr{I}_\alpha(p_\epsilon, p_\theta)$.
Thus for MLE or ``robustified MLE," one needs to solve
\begin{eqnarray}
\label{1eqn:reverse_projection}
\inf_{p_\theta\in \Pi} D(\bar{p}_n,p_\theta),
\end{eqnarray}   
where $D$ is either $I$, $D_\alpha$, $B_\alpha$ or $\mathscr{I}_\alpha$;
$\bar{p}_n = p_n$ when $D$ is $I,B_\alpha$ or $\mathscr{I}_\alpha$ and
$\bar{p}_n = \widetilde{p}_n$ when $D$ is $D_\alpha$.
Notice that (\ref{1eqn:score_equation_for_D_alpha}) for $\alpha>1$,
(\ref{1eqn:score_equation_B_alpha}) and
(\ref{1eqn:score_equation_I_alpha}) for $\alpha<1$, do not make
sense in terms of robustness. However, they still serve as first
order optimality condition for the divergence minimization problem
(\ref{1eqn:reverse_projection}). A probability distribution that attains the infimum is known as a reverse $D$-projection of $\bar{p}_n$ on $\Pi$.

A ``dual'' minimization problem is the so-called forward projection problem, where the minimization is over the first argument of the divergence function. Given  a set $\mathbb{C}$ of probability 
distributions with support $\mathbb{S}$ and a probability distribution
$q$ with the same support,
any $p^*\in \mathbb{C}$ that attains 
\begin{eqnarray}
\label{1eqn:forward_projection}
\inf_{p\in \mathbb{C}} D(p,q)
\end{eqnarray}
is called a forward $D$-projection of $q$ on $\mathbb{C}$. Forward projection is usually on a convex set or on an $\alpha$-convex set of probability distributions. Forward projection on a convex set is motivated by the well-known maximum entropy principle of statistical physics \cite{Jaynes83J}. Motivation for forward projection on $\alpha$-convex set comes from the so-called  non-extensive statistical physics \cite{Tsallis88J, TsallisMP98J, Tsallis09B, KumarS15J1}. Forward $I$-projection on convex set was extensively studied by  Csisz\'ar \cite{Csiszar75J, Csiszar84J, Csiszar95J2},  Csisz\'ar and Mat\'u\v{s} \cite{CsiszarM12J, CsiszarM03J},  Csisz\'ar and Shields \cite{CsiszarS04B}, and Csisz\'ar and Tusn\'ady
\cite{CsiszarT84J}.

The forward projections of either of the divergences
in (\ref{1defn:D_alpha_divergence})-(\ref{1eqn:kullback-leibler}) on convex (or $\alpha$-convex) sets of probability
distributions yield a parametric family of probability distributions. A reverse projection on this parametric family turns into a 
forward projection
on the convex (or $\alpha$-convex) set, which further reduces to solving
a system of linear equations. We call such a result a 
projection theorem of the divergence. 
These projection theorems were mainly due to an 
``orthogonal'' relationship between the convex (or the $\alpha$-convex) family and the
associated parametric family. The Pythagorean theorem of the
associated divergence plays a key role in this context.

Projection theorem of the $I$-divergence is due to Csisz\'ar and Shields
\cite[pp. 24]{CsiszarS04B}
where the convex family is a linear family and the associated parametric family is an exponential family.
Projection theorem for $\mathscr{I}_\alpha$-divergence was established by Kumar and Sundaresan \cite[Theorem 18 and Theorem 21]{KumarS15J2}, where the so-called $\alpha$-power-law family ($\mathbb{M}^{(\alpha)}$-family) plays the role of the exponential family. Projection theorem for $D_\alpha$-divergence was established by Kumar and Sason \cite[Theorem 6]{KumarS16J}, where a variant of the $\alpha$-power-law family, called $\alpha$-exponential family ($\mathscr{E}^{(\alpha)}$-family), plays the role of the exponential family and the so-called $\alpha$-linear family plays the role of the linear family. Projection theorem for more general class of Bregman divergences, in which $B_\alpha$ is a subclass, was established by  Csisz\'ar and Mat\'u\v{s} \cite{CsiszarM12J} using techniques from convex analysis. (See also \cite{OharaW10J}.) We observe that the parametric family associated with the projection theorem of $B_\alpha$-divergence is closely related to the $\alpha$-power-law family, which we call a $\mathbb{B}^{(\alpha)}$-family. 

Thus projection theorems enable us to find the estimator (MLE or any of the generalized estimators) as a forward projection if the estimation is done under a specific parametric family. While for MLE the required family is exponential, for the generalized estimations, it is one of the power-law families.
\vspace*{0.1cm}

Our main contributions in this paper are the following.
\begin{itemize}
		\item[1.] The projection theorem for $\mathscr{I}_\alpha$-divergence is known in the literature only for the discrete, canonical case. We first define the associated power-law family $\mathbb{M}^{(\alpha)}$ in a more general setup and establish projection theorem for $\mathscr{I}_\alpha$ on $\mathbb{M}^{(\alpha)}$.
    \item[2.]  We derive the projection theorem for $D_\alpha$-divergence on $\mathscr{E}^{(\alpha)}$-family in more generality by establishing a one-to-one correspondence between this problem and the projection problem concerning $\mathscr{I}_\alpha$-divergence on $\mathbb{M}^{(\alpha)}$-family.
	\item[3.] We introduce the concept of regularity (full-rank family) for the power-law families $\mathbb{B}^{(\alpha)}$, $\mathbb{M}^{(\alpha)}$ and $\mathscr{E}^{(\alpha)}$. We also establish a close relationship among them.
	\item[4.] We show that the Cauchy distributions (also known as $q$-Gaussian distributions \cite{PratoT99J, VignatP07J, OharaW10J, GhoshdastidarDB12ISIT, Matsuzoe17J}) are the escort distributions of the Student distributions \cite{JohnsonV07J}, \cite{EguchiKK11J}.  
	Also Cauchy and Student distributions, respectively, form regular $\mathscr{E}^{(\alpha)}$ and regular $\mathbb{M}^{(\alpha)}$ (and $\mathbb{B}^{(\alpha)}$) families. 
	\item[5.] We find some generalized estimators for the location and scale parameters of the Student and Cauchy distributions using the projection theorems of the Jones et al. and Hellinger divergences. We also observe that these projection theorems can not be applied when the distributions are compactly supported. In this case the estimators should be found on a case by case basis. We find estimators in one such a case and compare it with MLE.
\end{itemize}

Rest of the paper is organized as follows.  In Section \ref{1sec:the_general_form_of_the_alpha_families}, we first generalize the power-law families to the continuous case and show that the Student and Cauchy distributions belong to this class. We also introduce the notion of regularity to these power-law families and establish the relationship among them in this section. In Section \ref{1sec:estimating_equation_for_the_general_family}, we establish projection theorems for the general power-law families. In Section
{\ref{1sec:generalized_estimation_on_student_t_and_cauchy}}, we apply projection theorems to Student and Cauchy distributions to find generalized estimators for their parameters. We also perform some simulations to analyze the efficacy of such estimators. We end the paper with a summary and concluding remarks in Section \ref{1sec:summary_remarks}. In the Appendix, we establish projection theorem of $B_\alpha$-divergence in the discrete case using elementary tools and identify the parametric family associated with this divergence.
\section{The power-law families: definition and examples}
\label{1sec:the_general_form_of_the_alpha_families}
In this section, we define the power-law families associated with the projection theorems of the divergences $B_\alpha$, $\mathscr{I}_\alpha$ and $D_\alpha$ in a more general set-up than they are studied in the literature. We also introduce the concept of regularity for these families. In the literature such a notion for exponential family has been studied, which sometimes is referred as full-rank family (see \cite{LehmannC98B, HoggCM13B}). We then make a comparison among these families. We also show that the well-known Student and Cauchy distributions can be expressed as regular power-law families.

\subsection{\bf{The $\mathbb{B}^{(\alpha)}$-family}}
\label{1subsec:B_alpha_family}
Motivation for $\mathbb{B}^{(\alpha)}$-family comes from the forward projection of $B_\alpha$-divergence on a linear family (See (\ref{1eqn:linear_family})). 
Csisz\'ar and Mat\'u\v{s} \cite{CsiszarM12J} studied a more general form of this family in connection with the projection problems of Bregman divergences. 
\vspace{0.2cm}

\begin{definition}
	\label{1defn:general_B_alpha_family}
\emph{	Consider a family of probability distributions $\lbrace p_\theta : \theta\in\Theta \rbrace$ on $\mathbb{R}^d$, where $\Theta$ is an open subset of $\mathbb{R}^k$. Let $\mathbb{S}$ be the support of $p_\theta$ (which may depend on $\theta$). Let $w = [w_1,\ldots,w_s]^\top$ and $f =$ $[f_1,\ldots,$ $f_s]^\top$, where $w_i: \Theta \to \mathbb{R}$ is differentiable for $i\in\{1,\ldots,s\}$, $f_i: \mathbb{R}^d\to \mathbb{R}$ for $i\in\{1,\ldots,s\}$ and $h:\mathbb{R}^d\to \mathbb{R}$. The family is said to form a $k$-parameter $\mathbb{B}^{(\alpha)}$-family characterized by $h,w,f,\Theta$ and $\mathbb{S}$ if
	\begin{eqnarray}
	\label{1eqn:form_of_general_B_alpha_family}
	{p_\theta({\bf{x}})} = \left\{
	\begin{array}{ll}
	{\big[ h({\bf{x}}) +  F(\theta) + 
		w(\theta)^\top f({\bf{x}}) \big]^{\frac{1}{\alpha - 1}},} &\hbox{~} 
	{\bf{x}}\in \mathbb{S},\\
	{0,} &\hbox{~otherwise},
	\end{array}
	\right.
	\end{eqnarray}
	for some differentiable function $F:\Theta \to \mathbb{R}$. Here $F(\theta)$ is the normalizing factor that can be determined from $\int_{\mathbb{S}} [ h({\bf{x}}) +  F(\theta) + w(\theta)^\top f({\bf{x}})]^{1/(\alpha - 1)}d{\bf{x}} = 1$.}
\end{definition}
\vspace{0.2cm}

The family is said to be regular if, in addition, the following conditions are satisfied.
\begin{enumerate}
	\label{1defn:regular_exponential_family}
	\item[(i)] support $\mathbb{S}$ does not depend on the parameter $\theta$,
	\item[(ii)] number of $\theta_i$'s equals the number of $w_i$'s, that is,
	$s=k$,
	\item[(iii)] the functions 1, $w_1, \ldots, w_s$ are linearly
	independent on $\Theta$,
	\item[(iv)] the
	functions $1,f_1,\ldots,f_s$ are linearly independent on $\mathbb{S}$.
\end{enumerate} 
Further, it is said to be in canonical form if $w_i(\theta) = \theta_i$ for $i\in\{1,\ldots,k\}$. 
The natural parameter space in this case is given by the set of all $\theta\in\mathbb{R}^k$ such that $[h({\bf{x}}) +  F(\theta) + w(\theta)^\top f({\bf{x}})]^{1/(\alpha-1)}>0$ on $\mathbb{S}$ and $\int_{\mathbb{S}} [ h({\bf{x}}) +  F(\theta) + w(\theta)^\top f({\bf{x}})]^{1/(\alpha - 1)}d{\bf{x}} = 1$. 

Observe that $\mathbb{B}^{(\alpha)}$-family is a special case of the family $\mathcal{F}_{[\beta h]}$ in \cite[Eq.~(28)]{CsiszarM12J} with
$h= q$ and $\beta(\cdot, t) = \frac{1}{\alpha -1}[t^{\alpha}-\alpha t +\alpha -t]$.
Bashkirov \cite[Eq. (15)]{Bashkirov04J} derived maximum R\'enyi entropy distribution subject to linear constraints on underlying probability distribution, as in (\ref{1eqn:form_of_general_B_alpha_family}), and called it to be in S-form. Naudts \cite[Ex. 4]{Naudts04J} derived the canonical $\mathbb{B}^{(\alpha)}$-family with $h\equiv 1$ as the `free energy' minimizing distributions with respect to Tsallis entropy.
We shall now see some examples of $\mathbb{B}^{(\alpha)}$-family.

\begin{example}[\textbf{Student distributions}]
	\label{1expl:example_of_B_alpha}
\emph{	Let ${\boldsymbol{\mu}}:= [\mu_1,\ldots,\mu_d]^\top\in\mathbb{R}^d$, $\boldsymbol{\Sigma}:=(\sigma_{ij})$ be a symmetric, positive-definite matrix of order $d$ and $\nu\in\mathbb{R}\setminus \{0\}$. The $d$-dimensional Student distribution with location parameter ${\boldsymbol{\mu}}$, scale parameter $\boldsymbol{\Sigma}$ and degrees of freedom parameter $\nu$, with $\nu\notin[2-d,0]$ when $d\geq 3$, is given by 
	\begin{equation}
	\label{1eqn:student_distribution_in_nu}
	p_{{\boldsymbol{\mu}},\boldsymbol{\Sigma}} ({\bf{x}}) = N_{\boldsymbol{\Sigma},\nu} \Big[ 1 +  \frac{1}{\nu} ({\bf{x}} - {\boldsymbol{\mu}})^\top \boldsymbol{\Sigma} ^{-1} ({\bf{x}} - {\boldsymbol{\mu}}) \Big]_+ ^{-\frac{\nu + d}{2}},
	\end{equation}
	where for a real number $r$, $[r]_+:= \max\{r,0\}$.	
	The support of this distribution is given by
	\begin{displaymath}
	\mathbb{S} = \left\{
	\begin{array}{ll}
	{\big\lbrace {\bf{x}} : ({\bf{x}}
		- \boldsymbol{\mu})^\top \boldsymbol{\Sigma}^{-1} ({\bf{x}} - \boldsymbol{\mu}) < -\nu
		\big\rbrace,} &\hbox{~if~}
	\nu\in (-\infty, \min\{0,2-d\}),\\
	{\mathbb{R}^d}, &\hbox{~if~} 
	\nu \in (0,\infty),
	\end{array}
	\right.
	\end{displaymath}
	and the normalizing factor
	\begin{displaymath}
	N_{\boldsymbol{\Sigma},\nu} := \left\{
	\begin{array}{ll}
	{\frac{\Gamma (1-[\nu/2])}{\Gamma (1- [\nu +d]/2) (-\nu\pi) ^{d/
				2} |\boldsymbol{\Sigma}|^{1/2}},} &\hbox{~if~}
	\nu\in (-\infty, \min\{0,2-d\}),\\\\
	{\frac{\Gamma ([\nu +d]/2)}{\Gamma (\nu/2) (\nu\pi )^{d/2}
			|\boldsymbol{\Sigma}|^{1/2}}, } &\hbox{~if~} 
	\nu \in (0,\infty).
	\end{array}
	\right.
	\end{displaymath}
	It should be noted that Student distributions are not defined for $\nu\in[2-d,0]$ when $d\geq 3$ as (\ref{1eqn:student_distribution_in_nu}) is not integrable in this case. While these distributions do not have finite mean for $\nu\in[0,1]$, they do not have finite variance for $\nu\in[0,2]$. For all other values of $\nu$, the mean and covariance matrix of these distributions are given by $\boldsymbol{\mu}$ and $[\nu/(\nu-2)]\cdot \boldsymbol{\Sigma}$ respectively. Further, (\ref{1eqn:student_distribution_in_nu}) coincides with a normal distribution when $|\nu|\to \infty$.\\
	Let $\alpha := 1-\frac{2}{\nu+d}$. Then $\nu\to +\infty$ and $\nu\to -\infty$ correspond to $\alpha\to 1$ from the left and the right respectively. Let $\theta = [\mu_i,\sigma_{ij}]^\top_{i,j\in\{1,\ldots,d\}, i\le j}$. Then
	(\ref{1eqn:student_distribution_in_nu}) can be re-written as
	\begin{equation}
	\label{1eqn:student_distribution}
	p_{\theta} ({\bf{x}}) = N_{\theta,\alpha} \big[ 1 +  b_\alpha ({\bf{x}}
	- \boldsymbol{\mu})^\top \boldsymbol{\Sigma}^{-1} ({\bf{x}} - \boldsymbol{\mu}) \big]_+ ^{\frac{1}{\alpha - 1}},
	\end{equation}
	where $\alpha\in (-\infty,\min \{0, (d-2)/d\})\cup ((d-2)/d,1)\cup (1,\infty)$, $b_\alpha = 1/\nu =(1-\alpha)/[2 - d (1 - \alpha)]$ and 
	$N_{\theta,\alpha}=N_{\boldsymbol{\Sigma},\nu}$, the normalizing factor. Notice that the Student distribution with $\nu=-d$ is not considered in (\ref{1eqn:student_distribution}) as $\nu=-d$ corresponds to an infinite value of $\alpha$. For a matrix $A=(a_{ij})_{d\times d}$, we use the following notations. 
	\begin{align*}
	\begin{array}{ll}
	{\rm Tr}(A) :=  \sum\limits_{i=1}^d a_{ii},\quad
	\rm{vec} (A) := [a_{11},\dots,a_{1d},a_{21},\ldots,a_{2d},\ldots,a_{d1},\ldots,a_{dd}]^\top,
	\end{array}
	\end{align*}
	that is, $\rm{vec}(A)$ is a column vector of dimension $d^2$ where its $[(i-1)d + j]$-th element is $a_{ij}$ for
	$i,j\in\{1,\dots,d\}$. With these notations
	(\ref{1eqn:student_distribution}) can be re-written,
	for ${\bf{x}}\in \mathbb{S}$, as
	\begin{align}
	\label{1eqn:student_t_as_B_alpha}
	p_{\theta} ({\bf{x}}) &= N_{\theta,\alpha} \big[ 1 + b_\alpha \lbrace
	{\bf{x}}^\top \boldsymbol{\Sigma} ^{-1} {\bf{x}} - 2 \boldsymbol{\mu}^\top \boldsymbol{\Sigma} ^{-1} {\bf{x}}
	+\boldsymbol{\mu}^\top \boldsymbol{\Sigma} ^{-1}\boldsymbol{\mu}\rbrace\big]^{\frac{1}{\alpha -1}}\nonumber\\
	& \stackrel{(a)}{=} \big[ N_{\theta,\alpha}^{\alpha -1} + b_\alpha
	N_{\theta,\alpha}^{\alpha -1} \lbrace \rm{Tr}(
	{\bf{x}}^\top \boldsymbol{\Sigma}^{-1} {\bf{x}}) - 2 \boldsymbol{\mu}^\top\boldsymbol{\Sigma} ^{-1} {\bf{x}}
	+\boldsymbol{\mu}^\top\boldsymbol{\Sigma} ^{-1}\boldsymbol{\mu}\rbrace\big]^{\frac{1}{\alpha -1}}\nonumber\\
	& \stackrel{(b)}{=} \big[ N_{\theta,\alpha}^{\alpha -1} + b_\alpha 
	N_{\theta,\alpha}^{\alpha -1}\lbrace \rm{Tr}(
	\boldsymbol{\Sigma} ^{-1} {\bf{x}}{\bf{x}}^\top) - 2 \boldsymbol{\mu}^\top \boldsymbol{\Sigma} ^{-1} {\bf{x}}
	+\boldsymbol{\mu}^\top\boldsymbol{\Sigma} ^{-1}\boldsymbol{\mu}\rbrace\big]^{\frac{1}{\alpha -1}}\nonumber\\
	& \stackrel{(c)}{=} \big[ N_{\theta,\alpha}^{\alpha -1} + b_\alpha
	N_{\theta,\alpha}^{\alpha -1} \lbrace 
	\rm{vec} (\boldsymbol{\Sigma} ^{-1})^\top \rm{vec}({\bf{x}}{\bf{x}}^\top) - 2 \boldsymbol{\mu}^\top \boldsymbol{\Sigma} ^{-1} {\bf{x}} +\boldsymbol{\mu}^\top\boldsymbol{\Sigma} ^{-1}\boldsymbol{\mu}\rbrace\big]^{\frac{1}{\alpha -1}}\nonumber\\
	& = \big[ 1 + (N^{\alpha - 1}_{\theta,\alpha} +
	b_\alpha N^{\alpha - 1}_{\theta,\alpha}\boldsymbol{\mu}^\top \boldsymbol{\Sigma} ^{-1}\boldsymbol{\mu} -1)
	- 2b_\alpha N^{\alpha - 1}_{\theta,\alpha} (\boldsymbol{\Sigma} ^{-1}\boldsymbol{\mu})^\top {\bf{x}} + b_\alpha N^{\alpha - 1}_{\theta,\alpha} \rm{vec}(\boldsymbol{\Sigma} ^{-1})^\top \rm{vec}({\bf{x}}{\bf{x}}^\top) 
	\big]^{\frac{1}{\alpha -1}},
	\end{align}
	where equality (a) follows because ${\bf{x}}^\top \boldsymbol{\Sigma}^{-1} {\bf{x}}$ is a scalar, (b) follows because $\rm{Tr}(AB) = \rm{Tr}(BA)$, and (c) follows because $\rm{Tr}(AB) = \rm{vec}(A)^\top\rm{vec}(B^\top)$.
	Comparing (\ref{1eqn:student_t_as_B_alpha}) with
	(\ref{1eqn:form_of_general_B_alpha_family}), we conclude that the Student distributions form a $d(d+3)/2$-parameter $\mathbb{B}^{(\alpha)}$-family with
	\begin{align*}
	\begin{array}{ll}
	{\theta = [\mu_i,\sigma_{ij}]^\top_{i,j\in\{1,\ldots,d\}, i\le j},\quad
		F(\theta) = N^{\alpha - 1}_{\theta,\alpha} +
		b_\alpha N^{\alpha - 1}_{\theta,\alpha}\boldsymbol{\mu}^\top \boldsymbol{\Sigma} ^{-1}\boldsymbol{\mu} -1,}\\
	{h({\bf{x}})\equiv 1,\quad w(\theta)=\big[w^{(1)}(\theta),w^{(2)}(\theta)\big]^\top,\quad
		f({\bf{x}}) = \big[f^{(1)}({\bf{x}}),
		f^{(2)}({\bf{x}})\big]^\top,}
	\end{array}
	\end{align*}
	where
	\begin{eqnarray}
	\label{1eqn:parameter_in_student_as_B_alpha}
	\begin{array}{ll}
	{w^{(1)}(\theta) = - 2b_\alpha N^{\alpha - 1}_{\theta,\alpha} {\boldsymbol{\Sigma}} ^{-1}{\boldsymbol{\mu}},\quad
		f^{(1)}({\bf{x}}) = {\bf{x}},} \quad
	{w^{(2)}(\theta) = b_\alpha N^{\alpha - 1}_{\theta,\alpha} \rm{vec} ({\boldsymbol{\Sigma}} ^{-1}),\quad
		f^{(2)}({\bf{x}}) = \rm{vec}({\bf{x}}{\bf{x}}^\top)}.
	\end{array}
	\end{eqnarray}
	}
\end{example} 

The distributions in (\ref{1eqn:student_distribution_in_nu}) for $\nu\in (-\infty,-d)\cup (2,\infty)$ were studied by Johnson and Vignat \cite[Definition 1]{JohnsonV07J} as the maximizer of R\'enyi entropy under covariance constraint, where they classified them as Student-t when $\nu >2$ and Student-r when $\nu <-d$ (see also \cite{Bashkirov04J}). For simplicity we just call them Student distributions. Observe that (\ref{1eqn:student_distribution_in_nu})
for $\nu >0$ is the usual $d$-dimensional $t$-distribution.
\vspace{0.2cm}

\begin{theorem}
	\label{1thm:student_dist_regular_B_alpha}
	The Student distributions for $\nu>0$ (that is, $\alpha\in({(d-2)}/{d},1)$) form a {\em regular} $\mathbb{B}^{(\alpha)}$-family. 
\end{theorem}
\vspace{0.2cm}

\begin{proof}
	Let $\boldsymbol{\Sigma}^{-1}:= (\sigma^{ij})_{d\times d}$ be the inverse of $\boldsymbol{\Sigma}$. The characterizing functions $w^{(i)}$'s and $f^{(i)}$'s in (\ref{1eqn:parameter_in_student_as_B_alpha})
	are given by
	\begin{align*}
	w^{(1)}(\theta)=[w_1(\theta),\ldots,w_d(\theta)]^\top,\quad
	f^{(1)}({\bf{x}})=[f_1({\bf{x}}),\ldots,f_d({\bf{x}})]^\top,
	\end{align*}
	such that
	\begin{align*} 
	w_i(\theta) = -2b_\alpha N_{\theta,\alpha}^{\alpha-1} \sum\limits_{j=1}
	^d \sigma^{ij}\mu_j, \quad f_i({\bf{x}}) =x_i,\quad\text{for}~ i\in\{1,\ldots,d\},
	\end{align*}
	and
	\begin{align*}
	w^{(2)}(\theta)=[w_{ij}(\theta)]^\top_{i,j\in\{1,\ldots,d\},~i\leq j},\quad
	f^{(2)}({\bf{x}})=[f_{ij}({\bf{x}})]^\top_{i,j\in\{1,\ldots,d\},~i\leq j},
	\end{align*}
	where 
	\begin{align*}
	w_{ij}(\theta) = b_\alpha N^{\alpha - 1}_{\theta,\alpha}\sigma^{ij},\quad
	i,j\in\{1,\ldots,d\},~i\leq j,\quad
	f_{ii}({\bf{x}})=x_i^2,~f_{ij}({\bf{x}})=2x_ix_j,\quad
	i,j\in\{1,\ldots,d\},~i<j.
	\end{align*}
	Note that the number of $w_i$'s and $w_{ij}$'s = $d+d+(d-1)+(d-2)+\cdots+1= d(d+3)/2$, which is same as the number of unknown parameters
	$\theta_i$'s. Also $1$, $f_i$'s and $f_{ij}$'s are linearly independent
	on $\mathbb{S}$. Hence it
	remains to show only that $1$, $w_i$'s and $w_{ij}$'s are linearly independent
	on $\Theta$. Suppose that
	\begin{equation*}
	c. 1 +\sum\limits_{i=1}^{d} c_i w_i(\theta)
	+\sum\limits_{i=1}^d\sum\limits_{j=i}^d c_{ij}w_{ij}(\theta)  =0~ \text{for some}~c,c_i,c_{ij}\in\mathbb{R}.
	\end{equation*}
	Dividing both sides by $b_\alpha N_{\theta,\alpha}^{\alpha -1}$,
	\begin{equation}
	\label{1eqn:linearly_independent_condition}
	c b_\alpha^{-1} N_{\theta,\alpha}^{1-\alpha} -2\sum\limits_{i=1}^{d} c_i 
	\Big[ \sum\limits_{j=1}^d \sigma^{ij}\mu_j\Big]
	+  \sum\limits_{i=1}^d\sum\limits_{j=i}^d c_{ij}\sigma^{ij} =0.
	\end{equation}
	Taking partial derivative with respect to $\boldsymbol{\mu}$ in (\ref{1eqn:linearly_independent_condition}),
	\begin{eqnarray}
	\label{1eqn:linearly_independent_condition_4}
	[c_1,\ldots,c_d]\Sigma^{-1} ={\bf{0}}^\top,
	\end{eqnarray}
	where $\bf{0}$ is the zero vector in $\mathbb{R}^d$.
	Since $|\Sigma^{-1}| \neq 0$, from (\ref{1eqn:linearly_independent_condition_4})
	we must have $c_1=\cdots=c_d=0.$
	Thus
	(\ref{1eqn:linearly_independent_condition}) becomes
	\begin{equation}
	\label{1eqn:linearly_independent_condition_3}
	c b_\alpha^{-1}N^{1-\alpha}_{\theta,\alpha} +  \sum\limits_{i=1}^d\sum\limits_{j=i}^d c_{ij}\sigma^{ij} =0.
	\end{equation}
	For
	$i,j\in\{1,\ldots,d\}$, $i\leq j$,
	\begin{equation*}
	\partial_{\sigma^{ij}} (cb_\alpha^{-1}N_{\theta,\alpha}^{1-\alpha})
	= cb_\alpha^{-1}(1-\alpha) N_{\theta,\alpha}^{-\alpha} \partial_{\sigma^{ij}} (N_{\theta,\alpha})
	= \big( [cb_\alpha^{-1}(1-\alpha) N_{\theta,\alpha}^{1-\alpha}]\big/
	2|\boldsymbol{\Sigma}^{-1}|\big)\partial_{\sigma^{ij}} (|\boldsymbol{\Sigma}^{-1}|)
	 =-k_\theta\partial_{\sigma^{ij}} (|\boldsymbol{\Sigma}^{-1}|),
	\end{equation*} 
	where $k_\theta := [cb_\alpha^{-1}(\alpha-1) N_{\theta,\alpha}^{1-\alpha}]\big/ [2|\boldsymbol{\Sigma}^{-1}|]$ and $\partial_{\sigma^{ij}}$ denotes partial derivative with respect to $\sigma^{ij}$.
	Thus differentiating
	(\ref{1eqn:linearly_independent_condition_3})
	with respect to $\sigma^{ij}$, for $i,j\in\{1,\ldots,d\}$, $i\leq j$,
	$c_{ij}= k_\theta\partial_{\sigma^{ij}} (|\boldsymbol{\Sigma}^{-1}|)$.
	Using
	these values in (\ref{1eqn:linearly_independent_condition_3}),
	\begin{eqnarray}
	\label{1eqn:condition_on_sigma}
	cb_\alpha^{-1} N_{\theta,\alpha}^{1 - \alpha}\Big[1+ \tfrac{(\alpha-1)}{2|\boldsymbol{\Sigma}^{-1}|} \sum\limits_{i=1}^d\sum\limits_{j=i}^d \sigma ^{ij}\partial_{\sigma^{ij}}(|\boldsymbol{\Sigma}^{-1}|) \Big] =0.
	\end{eqnarray}
	Since $\boldsymbol{\Sigma}^{-1}$ is symmetric, 
	\begin{equation*}
	\sum\limits_{i=1}^d\sum\limits_{j=i}^d \sigma ^{ij}\partial_{\sigma^{ij}}(|\boldsymbol{\Sigma}^{-1}|) =
	\sum\limits_{i=1}^d \sum\limits_{j=1}^d\sigma ^{ij}
	(\text{cofactor of}~\sigma^{ij}~\text{in}~\boldsymbol{\Sigma}^{-1})= d |\boldsymbol{\Sigma}^{-1}|.
	\end{equation*}
	Using this in (\ref{1eqn:condition_on_sigma}),
	\begin{eqnarray*}
	c\Big[\tfrac{(\alpha-1)}{2|\boldsymbol{\Sigma}^{-1}|}\cdot  d\cdot |\boldsymbol{\Sigma}^{-1}|  + 1\Big] =0.
	\end{eqnarray*}
	Since $\alpha> {(d-2)}/{d}$, then
	$c=0$. This implies $k_\theta=0$ and thus
	$c_{ij}=0$ for all $i,j\in\{1,\ldots,d\}$, $i\leq j$.
	Hence $1$, $w_i$'s and $w_{ij}$'s are linearly independent. This completes the proof.
\end{proof}
\vspace{0.2cm}

\begin{remark}
\emph{	Student distributions for $\nu <0$ do not form a regular $\mathbb{B}^{(\alpha)}$-family as their support, in this case, depends on the unknown parameters.}
\end{remark}
\vspace{0.2cm}

\begin{example}
\emph{	Wigner semi-circle distributions \cite{Wigner58J} form a $\mathbb{B}^{(\alpha)}$-family.}
\end{example}

\subsection{\bf{The $\mathbb{M}^{(\alpha)}$-family}}
\label{1subsec:M_alpha_family}
We now define the parametric family $\mathbb{M}^{(\alpha)}$ associated with the projection theorem of $\mathscr{I}_\alpha$. Kumar and Sundaresan \cite{KumarS15J2} studied this family in the discrete case. \vspace{0.2cm}

\begin{definition}
	\label{1defn:general_M_alpha_family}
\emph{	Let $h,w,f,\Theta$ and $\mathbb{S}$ be as in Definition \ref{1defn:general_B_alpha_family}. The family of probability distributions $\lbrace p_\theta : \theta\in\Theta \rbrace$ is said to form a  $k$-parameter $\alpha$-power-law family or an $\mathbb{M}^{(\alpha)}$-family characterized by $h,w,f,\Theta$ and $\mathbb{S}$ if 
	\begin{eqnarray}
	\label{1eqn:form_of_general_M_alpha_family}
	{p_\theta({\bf{x}})} = \left\{
	\begin{array}{ll}
	{Z(\theta)\big[ h({\bf{x}}) +  
		w(\theta)^\top f({\bf{x}}) \big]^{\frac{1}{\alpha - 1}},} & 
	{\bf{x}}\in \mathbb{S},\\
	{0,} &\text{otherwise}
	,
	\end{array}
	\right.
	\end{eqnarray}
	for some differentiable function $Z:\Theta\to\mathbb{R}$. Here $Z(\theta)$ is the normalizing factor which is given by $Z(\theta) = 1/ \int_{\mathbb{S}}[h({\bf{x}}) +  w(\theta)^\top f({\bf{x}})]^{1/(\alpha-1)} d{\bf{x}}$.
	}
\end{definition}
\vspace{0.2cm}

Bashkirov \cite{Bashkirov04J} derived a specific form of (\ref{1eqn:form_of_general_M_alpha_family}) in connection with R\'enyi entropy maximization and called it to be in Z-form.

The family is said to be regular if, along with (i)-(iii) of Definition \ref{1defn:general_B_alpha_family}, also the functions $f_1,\dots,f_s$ are linearly independent on $\mathbb{S}$. Further, it is said to be canonical if $w_i(\theta)=\theta_i$ for $i\in\{1,\ldots,k\}$. The natural parameter space of this family is the set of all $\theta\in\mathbb{R}^k$ such that $[h({\bf{x}}) +  w(\theta)^\top f({\bf{x}})]^{1/(\alpha-1)}>0$ on $\mathbb{S}$ and $\int_{\mathbb{S}}[h({\bf{x}}) +  w(\theta)^\top f({\bf{x}})]^{1/(\alpha-1)} d{\bf{x}}<\infty$.
\vspace{0.2cm}

\begin{example}
	\label{1expl:example_of_M_alpha_family}
\emph{	The Student distributions in (\ref{1eqn:student_distribution}) can be re-written as
	\begin{align}
	\label{1eqn:student_distribution_as_M_alpha}
	p_\theta ({\bf{x}}) &= N_{\theta,\alpha} \big[ 1 + b_\alpha \lbrace
	{\bf{x}}^\top \boldsymbol{\Sigma} ^{-1} {\bf{x}} - 2 \boldsymbol{\mu}^\top\boldsymbol{\Sigma} ^{-1} {\bf{x}}
	+\boldsymbol{\mu}^\top \boldsymbol{\Sigma} ^{-1}\boldsymbol{\mu}\rbrace\big]_+^{\frac{1}{\alpha -1}}\nonumber\\
	&= N_{\theta,\alpha} \big[ 1 + b_\alpha \lbrace 
	\rm{vec}^\top (\boldsymbol{\Sigma} ^{-1}) \rm{vec}({\bf{x}}{\bf{x}}^\top) - 2 (\boldsymbol{\Sigma} ^{-1}\boldsymbol{\mu})^\top {\bf{x}}
	+\boldsymbol{\mu}^\top \boldsymbol{\Sigma} ^{-1}\boldsymbol{\mu}\rbrace\big]_+^{\frac{1}{\alpha -1}}.
	\end{align}
	Let $S(\theta):= 1+b_\alpha \boldsymbol{\mu}^\top\boldsymbol{\Sigma}^{-1}\boldsymbol{\mu}$. Note that
	$S(\theta)>0$ if $\alpha\in((d-2)/d,1)$. However, when $\alpha\notin((d-2)/d,1)$, we consider the restricted parameter space such that $S(\theta)>0$. Thus (\ref{1eqn:student_distribution_as_M_alpha}) can be re-written,
	for $\bf{x}\in\mathbb{S}$, as
	\begin{equation}
	\label{1eqn:student_distribution_as_M_alpha_1}
	p_\theta ({\bf{x}}) =  
	S(\theta)^{\frac{1}{\alpha -1}} N_{\theta,\alpha}
	\big[ 1 + b_\alpha S(\theta)^{-1}\lbrace 
	\rm{vec}^\top (\boldsymbol{\Sigma} ^{-1}) \rm{vec}({\bf{x}}{\bf{x}}^\top) - 2 (\boldsymbol{\Sigma} ^{-1}\boldsymbol{\mu})^\top {\bf{x}}
	\rbrace\big]^{\frac{1}{\alpha -1}}.
	\end{equation}
	Comparing (\ref{1eqn:student_distribution_as_M_alpha_1})
	and (\ref{1eqn:form_of_general_M_alpha_family}), we see that
	Student distributions form a $d(d+3)/2$-parameter 
	$\mathbb{M}^{(\alpha)}$-family with
	\begin{align*}
	\theta = [\mu_i,\sigma_{ij}]^\top_{i,j\in\{1,\ldots,d\}, i\le j},\quad 
	Z(\theta) = S(\theta)^{\frac{1}{\alpha -1}} N_{\theta,\alpha},\quad h({\bf{x}})\equiv 1,\quad
	w(\theta)=\big[w^{(1)}(\theta), w^{(2)}(\theta)\big]^\top,\quad
	f({\bf{x}}) = \big[ f^{(1)}({\bf{x}}),
	f^{(2)}({\bf{x}})\big]^\top,
	\end{align*}
	where
	\begin{align}
	\label{1eqn:charecterizing_entities_of_student_as_M}
	w^{(1)}(\theta) = - 2b_\alpha S(\theta)^{-1} \boldsymbol{\Sigma} ^{-1}\boldsymbol{\mu}, \quad 
	f^{(1)}({\bf{x}}) = {\bf{x}},\quad
	w^{(2)}(\theta) = b_\alpha S(\theta)^{-1} \rm{vec} (\boldsymbol{\Sigma}^{-1}),\quad 
	f^{(2)}({\bf{x}}) =
	\rm{vec}({\bf{x}}{\bf{x}}^\top).
	\end{align}
	}
\end{example}
This suggests a close relationship between $\mathbb{M}^{(\alpha)}$ and $\mathbb{B}^{(\alpha)}$ families. In the following, we elucidate this fact in more details.
\vspace{0.2cm}

\begin{remark}
	\begin{itemize}
		\item [(a)] $\mathbb{M}^{(\alpha)}$ can be expressed as a $\mathbb{B}^{(\alpha)}$: \emph{Any $p_\theta\in \mathbb{M}^{(\alpha)}$ as in
		(\ref{1eqn:form_of_general_M_alpha_family})
		can be re-written, for $\bf{x}\in\mathbb{S}$, as
		\begin{eqnarray}
		\label{1eqn:M_alpha_to_B_alpha}
		p_\theta({\bf{x}})= [1+ F(\theta) +\widetilde{w}(\theta)^\top\widetilde{f}({\bf{x}})]^{\frac{1}{\alpha -1}}
		\end{eqnarray}
		with $F(\theta)\equiv-1$, $\widetilde{w}(\theta)=\big[Z(\theta)^{\alpha-1},
		Z(\theta)^{\alpha-1}w_1(\theta),\ldots,Z(\theta)^{\alpha-1}
		w_s(\theta)\big]^\top$ and $\widetilde{f}({\bf{x}})$ =
		$\big[h({\bf{x}})$, $f_1({\bf{x}}),\ldots,$ $f_s({\bf{x}})\big]^\top$. This
		implies that these $p_\theta$ also form a $k$-parameter $\mathbb{B}^{(\alpha)}$-family but characterized by $1,\widetilde{f}$ and $\widetilde{w}$.
		\item [(b)] {\em $\mathbb{B}^{(\alpha)}$ can be expressed as an  $\mathbb{M}^{(\alpha)}$}: Any $p_\theta\in\mathbb{B}^{(\alpha)}$ as in
		(\ref{1eqn:form_of_general_B_alpha_family}) can be re-written, 
		for ${\bf{x}}\in \mathbb{S}$, as
		\begin{equation}
		\label{1eqn:B_alpha_to_M_alpha_1}
		p_\theta({\bf{x}}) = Z(\theta)[h({\bf{x}})
		+ \widetilde{w}(\theta)^\top\widetilde{f}({\bf{x}})]^{\frac{1}{\alpha -1}} 
		\end{equation}
		with $Z(\theta)\equiv 1$, $\widetilde{w}(\theta)= [F(\theta), w_1(\theta),\ldots,
		w_s(\theta)]^\top$ and $\widetilde{f}({\bf{x}})$ $=$  $[1,f_1({\bf{x}}),
		\ldots,$ $f_s({\bf{x}})]^\top$, or
		\begin{equation}
		\label{1eqn:B_alpha_to_M_alpha_2}
		p_\theta({\bf{x}}) = Z(\theta)[1
		+ \widetilde{w}(\theta)^\top\widetilde{f}({\bf{x}})]^{\frac{1}{\alpha -1}} 
		\end{equation}
		with $Z(\theta)= F(\theta)^{\frac{1}{\alpha -1}}$, $\widetilde{w}(\theta)= \big[1/F(\theta), w_1(\theta)/F(\theta),\ldots,
		w_s(\theta)/F(\theta)\big]^\top$ and $\widetilde{f}({\bf{x}})=$ 
		$[h({\bf{x}})$, $f_1({\bf{x}})$,
		$\ldots, f_s({\bf{x}})]^\top$, provided $F(\theta)> 0$.
		This implies that
		$p_\theta$ forms a $k$-parameter $\mathbb{M}^{(\alpha)}$-family
		as in (\ref{1eqn:B_alpha_to_M_alpha_1}) or in
		(\ref{1eqn:B_alpha_to_M_alpha_2}). However, as before, the characterizing entities when we view it as 
		a member of $\mathbb{M}^{(\alpha)}$ 
		are not the same as we view it as $\mathbb{B}^{(\alpha)}$.
		\item [(c)] {\em A regular $\mathbb{B}^{(\alpha)}$ may not be a regular  $\mathbb{M}^{(\alpha)}$}: Notice that
		the number of $w_i$'s (and $f_i$'s) is increased when we expressed any member of a $\mathbb{B}^{(\alpha)}$ as an $\mathbb{M}^{(\alpha)}$.
		Thus in general, (\ref{1eqn:B_alpha_to_M_alpha_1})
		or (\ref{1eqn:B_alpha_to_M_alpha_2}) need not define a
		{\em regular} $\mathbb{M}^{(\alpha)}$-family
		even if (\ref{1eqn:form_of_general_B_alpha_family}) defines a {\em
			regular}
		$\mathbb{B}^{(\alpha)}$-family. This can be seen in the following
		example. Consider the 1-dimensional Student distributions with unit variance and $1/3<\alpha<1$:
		\begin{equation}
		\label{1eqn:student-t_one_dim_sigma_fixed}
		p_\mu (x) = N_\alpha [1 +{b}_\alpha (x-\mu)^2]^{\frac{1}{\alpha -1}},
		\end{equation}
		where $N_\alpha$ is the normalizing factor which is independent of
		the unknown parameter $\mu$. This can be viewed as a regular $\mathbb{B}^{(\alpha)}$-family as
		\begin{equation*}
		p_\mu (x) = [(N_\alpha ^{\alpha -1} +N_\alpha ^{\alpha -1}{b}_\alpha x^2) + N_\alpha ^{\alpha -1}{b}_\alpha\mu^2 +
		(-2N_\alpha ^{\alpha -1}{b}_\alpha\mu) x]^{\frac{1}{\alpha -1}}
		\end{equation*}
		with $h(x)=N_\alpha ^{\alpha -1} +N_\alpha ^{\alpha -1}{b}_\alpha x^2$, $F(\mu) = N_\alpha ^{\alpha -1}{b}_\alpha\mu^2$,
		$w_1(\mu)= -2N_\alpha ^{\alpha -1}{b}_\alpha\mu$ and $f_1(x)=x$. Observe that (\ref{1eqn:student-t_one_dim_sigma_fixed}) can be re-written as
		an $\mathbb{M}^{(\alpha)}$-family as
		\begin{equation*}
		p_\mu (x) = N_\alpha[( 1 +{b}_\alpha x^2) + b_\alpha\mu^2 + (-2{b}_\alpha\mu) x]^{\frac{1}{\alpha -1}}
		\end{equation*}
		with $h(x)=1 +{b}_\alpha x
		^2$, $Z(\mu) = N_\alpha $, $w_1(\mu) = {b}_\alpha \mu^2$, $f_1(x) = 1$, $w_2(\mu) = -2{b}_\alpha\mu$ and $f_2(x) = x$. However, this does not define a {\em regular} $\mathbb{M}^{(\alpha)}$ as number of $w_i$'s
		(which is two) is not equal to the number of unknown parameters (which is one).
		\item [(d)] {\em The normalizing factor in a $\mathbb{B}^{(\alpha)}$-family may take negative values}: Unlike the normalizing factor $Z(\theta)$ in $\mathbb{M}^{(\alpha)}$, $F(\theta)$ in an $\mathbb{B}^{(\alpha)}$ may take negative values for some $\theta$ (see Example \ref{1expl:B_alpha_does_not_intersect_L} and \cite[Ex.~3]{KumarS15J2} for a comparison).
		}
		\end{itemize}
\end{remark}
\vspace{0.2cm}

In the following, we find conditions under which a regular $\mathbb{B}^{(\alpha)}$ can be expressed as a regular $\mathbb{M}^{(\alpha)}$.
\vspace{0.2cm}

\begin{proposition}
	\label{1pro:regular_B_alpha_and_regular_M_alpha}
	A regular $\mathbb{B}^{(\alpha)}$-family as in Definition \ref{1defn:general_B_alpha_family} with $h$ being a non-zero constant also forms a regular $\mathbb{M}^{(\alpha)}$-family
	characterized by the same functions $h$ and $f_i$'s, if $1+[F(\theta)/h]>0$ for $\theta\in\Theta$ and one of the following conditions holds.
	\begin{itemize}
		\item[(a)] $F(\theta)$ is identically a constant, or
		\item[(b)] $1$, $F(\theta)$, $w_1(\theta),\dots,w_k(\theta)$ are linearly independent.
	\end{itemize}
\vspace{0.2cm}
	
	\begin{proof}
		Consider a regular
		$\mathbb{B}^{(\alpha)}$-family with $h$ being identically a constant. Then
		from Definition \ref{1defn:general_B_alpha_family}, for $\textbf{x}\in \mathbb{S}$, we have
		\begin{equation}
		\label{1eqn:B_alpha_q_1}
		p_\theta({\bf{x}}) = \big[ h +  F(\theta) + 
		w(\theta)^\top f({\bf{x}}) \big]^{\frac{1}{\alpha - 1}}.
		\end{equation}
		(\ref{1eqn:B_alpha_q_1})
		can be re-written as
		\begin{equation}
		\label{1eqn:B_alpha_to_M_alpha_q_1}
		p_\theta({\bf{x}}) =
		S(\theta)^{\frac{1}{\alpha - 1}} [h +[w(\theta)/S(\theta)]^\top
		f({\bf{x}})]^{\frac{1}{\alpha -1}},
		\end{equation}
		where $S(\theta): = 1 + [F(\theta)\big/h]$. Comparing (\ref{1eqn:B_alpha_to_M_alpha_q_1})
		with (\ref{1eqn:form_of_general_M_alpha_family}) we see that
		$p_\theta$'s form an $\mathbb{M}^{(\alpha)}$-family characterized by $h$, $f_1,\ldots,f_k$.
		This family is regular if $1,w_1(\theta)/S(\theta),\ldots,
		w_k(\theta)/S(\theta)$ are linearly independent.
		Let
		\begin{equation*}
		c_0 +c_1[w_1(\theta)/S(\theta)] +\cdots + c_k[w_k(\theta)/S(\theta)] =0,
		\end{equation*}
		for some scalars $c_i$, $i\in\{0,\ldots,k\}$. Using the value of
		$S(\theta)$, we get
		\begin{equation*}
		c_0 [h+F(\theta)]+hc_1w_1(\theta) +\cdots + hc_kw_k(\theta) =0.
		\end{equation*}
		If $F(\theta)$ is identically a constant then $c_0=c_1=\cdots=c_k=0$,
		since $1,w_1,\dots,w_k$ are linearly independent. Otherwise also $c_0=c_1=\cdots=c_k=0$, if $1$, $F(\theta), w_1(\theta),\ldots,w_k(\theta)$ are
		linearly independent.
	\end{proof}
\end{proposition}

In the view of above proposition, we now show that Student distributions also form a regular $\mathbb{M}^{(\alpha)}$-family.
\vspace{0.2cm}

\begin{corollary}
	\label{1cor:Student_regular_M_alpha}
	Student distributions for $\nu >0$ (that is, $\alpha\in({(d-2)}/d,1)$) form a regular $\mathbb{M}^{(\alpha)}$-family.
\end{corollary}
\vspace{0.2cm}

\begin{proof}
	Recall that, for $\alpha\in({(d-2)}/d,1)$, Student distributions form a regular $\mathbb{B}^{(\alpha)}$-family with $h({\bf{x}})
	\equiv 1$ (Theorem \ref{1thm:student_dist_regular_B_alpha}). Hence, in
	view of Proposition \ref{1pro:regular_B_alpha_and_regular_M_alpha},
	these also form a regular $\mathbb{M}^{(\alpha)}$-family if 
	$1$, $F(\theta)$, $w_i(\theta)$'s and 
	$w_{ij}(\theta)$'s as described in Example
	\ref{1expl:example_of_B_alpha} are linearly independent. 
	To see this, let
	\begin{align}
	\label{1eqn:linearly_independent_condition_M_alpha}
	c. 1+ c_0 F(\theta)  +\sum\limits_{i=1}^{d} c_i w_i(\theta)
	+\sum\limits_{i=1}^d\sum\limits_{j=i}^d c_{ij}w_{ij}(\theta)  =0
	\end{align}
	for some $c,c_i$ and $c_{ij}\in\mathbb{R}$, where
	$F(\theta) = N^{\alpha - 1}_{\theta,\alpha} +
	b_\alpha N^{\alpha - 1}_{\theta,\alpha}\boldsymbol{\mu}^\top \boldsymbol{\Sigma} ^{-1}\boldsymbol{\mu}-1$ and
	$w_i$'s and $w_{ij}$'s are as defined in Theorem \ref{1thm:student_dist_regular_B_alpha}.
	Note that $\partial_{\boldsymbol{\mu}}[\boldsymbol{\mu}^\top \boldsymbol{\Sigma} ^{-1}\boldsymbol{\mu}] = 2(\boldsymbol{\Sigma}^{-1}\boldsymbol{\mu})$
	and $\partial_{\boldsymbol{\mu}}[(\boldsymbol{\Sigma}^{-1}\boldsymbol{\mu})^\top] = \boldsymbol{\Sigma}^{-1}$. Hence
	taking partial derivative with respect to $\boldsymbol{\mu}$ in
	(\ref{1eqn:linearly_independent_condition_M_alpha}), we get	
	\begin{equation}
	\label{1eqn:linearly_independent_condition_M_alpha1}
	2b_\alpha N_{\theta,\alpha}^{\alpha -1}[c_0 \boldsymbol{\Sigma}^{-1}\boldsymbol{\mu}-\boldsymbol{\Sigma}^{-1}\tilde{c}]
	= \bf{0},
	\end{equation}	
	where $\tilde{c}= [c_1,\ldots,c_d]^\top$. Since $|\boldsymbol{\Sigma}^{-1}|\neq 0$,
	from (\ref{1eqn:linearly_independent_condition_M_alpha1}), we have $c_0 \boldsymbol{\mu}-\tilde{c}=\bf{0}$, which, further upon taking partial derivative
	with respect to $\boldsymbol{\mu}$, implies $c_0=c_1=\cdots=c_d=0$. 
	Thus (\ref{1eqn:linearly_independent_condition_M_alpha})
	reduces to (\ref{1eqn:linearly_independent_condition_3})
	of Theorem \ref{1thm:student_dist_regular_B_alpha}. Hence
	proceeding as in Theorem \ref{1thm:student_dist_regular_B_alpha},
	we get $c_{ij}=0$ for $i,j\in\{1,\ldots,d\}$, $i\leq j$. This completes the proof.
\end{proof}
\vspace{0.2cm}

\begin{remark}
\emph{	Student distributions for $\nu \notin (0,\infty)$ do not form a regular
	$\mathbb{M}^{(\alpha)}$ as their support depends on the unknown parameters in this case.}
\end{remark}
\vspace{0.2cm}

\begin{example}
\emph{	Wigner semi-circle distributions also form an $\mathbb{M}^{(\alpha)}$-family.}
\end{example}
\subsection{\bf{The $\mathscr{E}^{(\alpha)}$-family}}
\label{1subsec:the E_alpha_family}
Next we define the parametric family $\mathscr{E}^{(\alpha)}$. This is motivated by the work of Kumar and Sason \cite{KumarS16J} in connection with the forward $D_\alpha$-projections on $\alpha$-linear families where they dealt only the discrete distributions. 
\vspace{0.2cm}

\begin{definition}
	\label{1defn:general_E_alpha_family}
\emph{	Let $h,w,f,\Theta$ and $\mathbb{S}$ be as in Definition \ref{1defn:general_B_alpha_family}. The family of probability distributions $\lbrace p_\theta : \theta\in\Theta \rbrace$ is said to form a  $k$-parameter $\alpha$-exponential family or an $\mathscr{E}^{(\alpha)}$-family
	characterized by $h,w,f,\Theta$ and $\mathbb{S}$ if 
	\begin{eqnarray}
	\label{1eqn:form_of_general_E_alpha_family}
	{p_\theta({\bf{x}})} = \left\{
	\begin{array}{ll}
	{Z(\theta) \big[ h({\bf{x}}) +  
		w(\theta)^\top f({\bf{x}}) \big]^{\frac{1}{1 - \alpha}},} &\hbox{~} 
	{\bf{x}}\in \mathbb{S},\\
	{0,} &\hbox{~otherwise},
	\end{array}
	\right.
	\end{eqnarray}
	for some differentiable function $Z:\Theta\to\mathbb{R}$. Here $Z(\theta)$ is the normalizing factor given by $Z(\theta) = 1/\int_{\mathbb{S}}[ h({\bf{x}}) +  w(\theta)^\top f({\bf{x}}) \big]^{1/{(1-\alpha)}} d{\bf{x}}$.
	}
\end{definition}
\vspace{0.2cm}

The family is said to be regular if, along with (i)-(iii) of Definition \ref{1defn:general_B_alpha_family}, also the functions $f_1,\ldots,f_s$ are linearly independent on $\mathbb{S}$. Further, it is said to be canonical if $w_i(\theta)=\theta_i$ for $i\in\{1,\ldots,k\}$. The natural parameter space in this case is given by the set of all $\theta\in\mathbb{R}^k$ such that $[ h({\bf{x}}) +  w(\theta)^\top f({\bf{x}}) \big]^{1/{(1-\alpha)}}>0$ on $\mathbb{S}$ and $\int_{\mathbb{S}}[ h({\bf{x}}) +  w(\theta)^\top f({\bf{x}}) \big]^{1/{(1-\alpha)}} d{\bf{x}}<\infty$.

Observe that, (\ref{1eqn:form_of_general_E_alpha_family}) with $h\equiv 1$ forms a $\phi$-exponential family for $\phi(x) = x^\alpha$ studied in \cite{Naudts04J, OharaW10J, Matsuzoe17J} (see also the references therein). However, if $h$ is not identically a constant, these two families are not the same.
\vspace{0.2cm}

\begin{remark}
	\label{1rem:M_alpha_E_alpha}
	\emph{Connection between $\mathscr{E}^{(\alpha)}$ and $\mathbb{M}^{(\alpha)}$ families:}
	\begin{itemize}
     \em{ \item [(a)] Observe that (\ref{1eqn:form_of_general_E_alpha_family})
		can be re-written, for $\bf{x}\in\mathbb{S}$, as
		\begin{eqnarray}
		\label{1eqn:E_alpha_as_M_alpha}
		p_\theta({\bf{x}}) = Z(\theta) [h({\bf{x}}) +w(\theta)^\top f({\bf{x}})]^{\frac{1}{(2-\alpha)-1}}.
		\end{eqnarray}
		Let $\alpha'=2-\alpha$. Thus an $\mathscr{E}^{(\alpha)}$-family can be expressed as an $\mathbb{M}^{(\alpha')}$-family characterized by the same entities, and vice-versa. This is also discussed in \cite{Tsallis09B} for the specific case $h\equiv 1$.
		 \item [(b)] $\mathbb{M}^{(\alpha)}$ and $\mathscr{E}^{(\alpha)}$ families are related through an escort transformation. When $h\equiv 1$, such escort transformations are studied in the context of non-extensive statistical physics \cite{TsallisMP98J, Naudts04J}. Karthik and Sundaresan \cite[Theorem 2]{KarthikS17J} derived this connection for discrete, canonical families. We now extend this to the more general $\mathbb{M}^{(\alpha)}$ and $\mathscr{E}^{(\alpha)}$ families as in (\ref{1eqn:form_of_general_M_alpha_family}) and (\ref{1eqn:form_of_general_E_alpha_family}).} 
\end{itemize}
\end{remark}
\vspace{0.2cm}

\begin{lemma}
			\label{1lem:equivalence_of_E_alpha_and_I_alpha}
			Let $\alpha\neq 0$. The map $p\mapsto p^{(\alpha)}$ establishes a one-to-one correspondence between an $\mathscr{E}^{(\alpha)}$-family characterized by $h,f,w,\Theta$ and $\mathbb{S}$, and the $\mathbb{M}^{(1/\alpha)}$-family characterized by the same entities, where
			$p^{(\alpha)}({\bf{x}})=
			{p({\bf{x}})^\alpha}\big/{\int p({\bf{y}})^\alpha d{\bf{y}}}$ is the $\alpha$-scaled
				measure (or the escort measure) associated with $p$.
		\end{lemma}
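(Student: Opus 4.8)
The plan is to exhibit the correspondence explicitly through the escort transformation and its inverse, verifying by direct substitution that each family is carried onto the other with the \emph{same} characterizing triple $(h,w,f)$, only the normalizing function being altered. First I would take a generic member $p_\theta$ of the $\mathscr{E}^{(\alpha)}$-family, so that $p_\theta({\bf{x}}) = Z(\theta)[h({\bf{x}}) + w(\theta)^T f({\bf{x}})]^{1/(1-\alpha)}$ on $\mathbb{S}$, and compute its $\alpha$-scaled measure. Raising to the power $\alpha$ sends the exponent $1/(1-\alpha)$ to $\alpha/(1-\alpha)$, and since $\alpha/(1-\alpha) = 1/[(1/\alpha)-1]$, the constant factor $Z(\theta)^\alpha$ cancels between numerator and the normalizing integral $\int p_\theta({\bf{y}})^\alpha d{\bf{y}}$. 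One is left with $p_\theta^{(\alpha)}({\bf{x}}) = \widetilde{Z}(\theta)[h({\bf{x}}) + w(\theta)^T f({\bf{x}})]^{1/[(1/\alpha)-1]}$, where $\widetilde{Z}(\theta) = \big(\int [h({\bf{y}}) + w(\theta)^T f({\bf{y}})]^{\alpha/(1-\alpha)} d{\bf{y}}\big)^{-1}$. This is precisely the form of an $\mathbb{M}^{(1/\alpha)}$-family member characterized by the same $h$, $w$ and $f$, so the map carries $\mathscr{E}^{(\alpha)}$ into $\mathbb{M}^{(1/\alpha)}$.

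For the reverse inclusion I would introduce the candidate inverse $q \mapsto q^{(1/\alpha)}$, the $(1/\alpha)$-scaled measure, and run the identical computation on a member $q_\theta({\bf{x}}) = Z(\theta)[h({\bf{x}}) + w(\theta)^T f({\bf{x}})]^{\alpha/(1-\alpha)}$ of $\mathbb{M}^{(1/\alpha)}$: raising to the power $1/\alpha$ returns the exponent to $1/(1-\alpha)$, and after renormalizing one obtains a member of $\mathscr{E}^{(\alpha)}$ with the same $h$, $w$, $f$. Thus the reverse map carries $\mathbb{M}^{(1/\alpha)}$ into $\mathscr{E}^{(\alpha)}$, and in both directions the parameter $\theta$ and the functions $h,w,f$ are preserved, so the underlying indexing is $\theta \mapsto \theta$.

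It then remains to check that the two maps are mutual inverses, which reduces to a single composition identity. If $p$ is any probability density and $q = p^{(\alpha)} = p^\alpha / c$ with $c = \int p^\alpha$, then $q^{1/\alpha} = p / c^{1/\alpha}$ and $\int q^{1/\alpha} = (\int p)/c^{1/\alpha} = c^{-1/\alpha}$ \emph{because} $\int p = 1$; dividing recovers $q^{(1/\alpha)} = p$, and the symmetric computation gives $(q^{(1/\alpha)})^{(\alpha)} = q$. The one point genuinely requiring care is not the algebra but the bookkeeping of domains: the composition identity leans on the members being normalized ($\int p = 1$), and one must confirm that the normalizing integrals defining the escort measures are finite, i.e. that $\theta$ lies in the relevant natural parameter space. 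This finiteness is exactly the condition built into the definitions of both families, so the bijection is between the families restricted to their natural parameter spaces; with $\alpha \neq 0$ ensuring $1/\alpha$ is defined, the assignment $p_\theta \leftrightarrow p_\theta^{(\alpha)}$ is the desired one-to-one correspondence.
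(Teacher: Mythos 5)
Your proposal is correct and follows essentially the same route as the paper: both compute the escort transform of a generic $\mathscr{E}^{(\alpha)}$ member to land in $\mathbb{M}^{(1/\alpha)}$ with the same $h,w,f$, and both establish bijectivity via the $(1/\alpha)$-scaling in the reverse direction (the paper phrases this as injectivity plus surjectivity, you as mutual inverses, but the underlying composition identity $(p^{(\alpha)})^{(1/\alpha)}=p$ is the same). No gaps.
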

		\vspace{0.2cm}
		
		\begin{proof}
			For any $p_\theta\in \mathscr{E}^{(\alpha)}$ characterized by
			$h,f$ and $w$, from (\ref{1eqn:form_of_general_E_alpha_family}) we have, for ${\bf{x}}\in \mathbb{S}$,
			\begin{align*}
			\label{1eqn:E_alpha_to_M_alpha}
			p^{(\alpha)}_\theta({\bf{x}}) 
			= \Big({Z(\theta)^{\alpha}}\big/~{\|p_\theta\|^\alpha}\Big)~ 
			\Big[h({\bf{x}}) + w(\theta)^\top f({\bf{x}})\Big]^{\frac{\alpha}{1 - \alpha}}
			=Z'(\theta)\Big[ h({\bf{x}}) + w(\theta)^\top f({\bf{x}})\Big]^{\frac{1}{\frac{1}{\alpha}-1}},
			\end{align*}
			where $\|p_\theta\|^\alpha = \int p({\bf{x}})^\alpha d{\bf{x}}$
			and $Z'(\theta) = Z(\theta)^\alpha\big/ \|p_\theta\|^\alpha$.
			Hence $p_\theta^{(\alpha)}\in \mathbb{M}^{(1/\alpha)}$ characterized
			by the same functions $h,f$ and $w$. So, the mapping is well-defined. The map is one-one, since it is easy to see that if $p_{\theta}^{(\alpha)} = p_{\eta}^{(\alpha)}$ for some $\theta, \eta\in\Theta$ then $p_{\theta} = p_{\eta}$.
			To verify it is onto, let $p\in \mathbb{M}^{(1/\alpha)}$
			be arbitrary. Then, for ${\bf{x}}\in\mathbb{S}$,
			\begin{align*}
			p({\bf{x}}) 
			& = Z(\theta)\Big[h({\bf{x}}) + w(\theta)^\top f({\bf{x}})\Big]^{\frac{1}{\frac{1}{\alpha}-1}},
			\end{align*}
			which implies
			\begin{equation*}
			p({\bf{x}})^{1/\alpha} =Z(\theta)^{1/\alpha} \big[h({\bf{x}}) + 
			w(\theta)^\top f({\bf{x}})\big]^{\frac{1}{1-\alpha}}
			\end{equation*}
			and hence
			\begin{align*}
			p^{(1/\alpha)}({\bf{x}})
			& = \frac{Z(\theta)^{1/\alpha}}{\int p({\bf{y}})^{1/\alpha}
				d{\bf{y}}} \big[h({\bf{x}}) + 
			w(\theta)^\top f({\bf{x}})\big]^{\frac{1}{1-\alpha}}.
			\end{align*}
			Thus $p^{(1/\alpha)}\in \mathscr{E}^{(\alpha)}$ and so $p^{(1/\alpha)} = p_\theta$ for some $\theta\in\Theta$. It is now easy to show that $p_\theta^{(\alpha)} = p$. Thus for any $p\in \mathbb{M}^{(\tiny{1/\alpha})}$ characterized by $h,f$ and $w$, there exists $p_\theta\in \mathscr{E}^{(\alpha)}$ characterized by the same functions
			such that $p^{(\alpha)}_\theta = p$. Hence the mapping is onto.
		\end{proof}
We now find	the $\alpha$-scaled Student distributions which form an
$\mathscr{E}^{(1/\alpha)}$-family, in view of Lemma \ref{1lem:equivalence_of_E_alpha_and_I_alpha}.
\vspace{0.2cm}

\begin{example}[{\bf{Cauchy distributions}}]
	\label{1expl:cauchy_distribution}
	
\emph{	Let us consider the $d$-dimensional Student distributions $p_\theta$
	as in 
	(\ref{1eqn:student_distribution}). The
	$\alpha$-scaled measure of $p_\theta$ is given by
	\begin{equation}
	\label{1eqn:escort_of_student_t}
	q_\eta({\bf{x}}) := p_\theta^{(\alpha)}({\bf{x}}) =
	\widetilde{N}_{\eta,\alpha} [ 1+ b_\alpha({\bf{x}} - \boldsymbol{\mu})^\top \boldsymbol{\Sigma}^{-1} ({\bf{x}} - \boldsymbol{\mu})]_+^{\frac{\alpha}{\alpha-1}},
	\end{equation}
	where
	\begin{equation*}
	\widetilde{N}_{\eta,\alpha} := 1\Big/\int [ 1+ b_\alpha({\bf{x}} - \boldsymbol{\mu})^\top \boldsymbol{\Sigma}^{-1} ({\bf{x}} - \boldsymbol{\mu})]^{\frac{\alpha}{\alpha-1}} d {\bf{x}}
	\end{equation*}
	is the normalizing factor and $\eta = [\mu_i,\sigma_{ij}]^\top_{i,j\in\{1,\ldots,d\}, i\leq j}$. Observe that $q_\eta$ is a valid density function for $\alpha\in(-\infty, \min\{0,(d-2)/d\})\cup (d/(d+2),1)\cup(1,\infty)$ and it has full support for $\alpha\in(d/(d+2),1)$. Notice that $q_\eta$ in (\ref{1eqn:escort_of_student_t}) can be re-written, for
	$\bf{x}\in\mathbb{S}$, as
	\begin{align*}
	q_\eta ({\bf{x}})
	& =\widetilde{N}_{\eta,\alpha} \big[ 1 + b_\alpha \lbrace \rm{vec}^\top (\boldsymbol{\Sigma}^{-1}) \rm{vec}({\bf{x}}{\bf{x}}^\top) - 2 (\boldsymbol{\Sigma} ^{-1}\boldsymbol{\mu})^\top {\bf{x}}
	+\boldsymbol{\mu}^\top \boldsymbol{\Sigma} ^{-1}\boldsymbol{\mu}\rbrace\big]^{\frac{\alpha}{\alpha-1}}\\
	& =  
	S(\eta)^{\frac{\alpha}{\alpha -1}} \widetilde{N}_{\eta,\alpha}
	\big[ 1 + b_\alpha S(\eta)^{-1}\lbrace 
	\rm{vec} ^\top(\boldsymbol{\Sigma}^{-1}) \rm{vec}({\bf{x}}{\bf{x}}^\top) - 2 (\boldsymbol{\Sigma}^{-1}\boldsymbol{\mu})^\top {\bf{x}}
	\rbrace\big]^{\frac{1}{1-({1}/{\alpha})}},
	\end{align*}
	where $S(\eta):= 1 + b_\alpha \boldsymbol{\mu}^\top \boldsymbol{\Sigma}^{-1}\boldsymbol{\mu}$.
	Using the notations
	$\beta = 1/\alpha$, $c_\beta = b_{1/\beta}$ and
	$M_{\eta,\beta} = \widetilde{N}_{\eta, {1/\beta}}$,
	for ${\bf{x}}\in \mathbb{S}$, we have
	\begin{equation}
	\label{1eqn:cauchy_distribution_as_E_alpha}
	q_\eta ({\bf{x}}) =
	S(\eta)^{\frac{1}{1 - \beta}} M_{\eta,\beta}
	\big[ 1 + c_\beta S(\eta)^{-1}\lbrace 
	\rm{vec}^\top (\boldsymbol{\Sigma}^{-1}) \rm{vec}({\bf{x}}{\bf{x}}^\top)  - 2 (\boldsymbol{\Sigma} ^{-1}\boldsymbol{\mu})^\top {\bf{x}}
	\rbrace\big]^{\frac{1}{1 - \beta}},
	\end{equation}
	where $\beta\in (d/(d-2),0)\cup(0,1)\cup (1,{(d+2)}\big/{d})$ for $d\leq 2$ and $\beta\in (-\infty,0)\cup(0,1)\cup (1,{(d+2)}\big/{d})$ for $d\ge 3$.
	Comparing 
	(\ref{1eqn:cauchy_distribution_as_E_alpha}) with (\ref{1eqn:form_of_general_E_alpha_family}), we see that $q_\eta$'s form a $d(d+3)/2$-parameter $\mathscr{E}^{(\beta)}$-family with
	\begin{align*}
	\begin{array}{ll}
	\eta = [\mu_i,\sigma_{ij}]^\top_{i,j\in\{1,\ldots,d\}, i\le j},\quad
	Z(\eta) = S(\eta)^{\frac{1}{1-\beta}}M_{\eta,\beta},\quad h({\bf{x}})\equiv 1,\\
	w(\eta)=\big[w^{(1)}(\eta),w^{(2)}(\eta)\big]^\top,\quad
	f({\bf{x}}) = \big[f^{(1)}({\bf{x}}),f^{(2)}({\bf{x}})\big]^\top,
	\end{array}
	\end{align*}
	where
	\begin{align*}
	w^{(1)}(\eta) = - 2 c_\beta S(\eta)^{-1}\cdot \boldsymbol{\Sigma} ^{-1}\boldsymbol{\mu},\quad 
	f^{(1)}({\bf{x}}) = {\bf{x}},\quad
	w^{(2)}(\eta) =  c_\beta S(\eta)^{-1}\cdot\rm{vec} (\boldsymbol{\Sigma} ^{-1}),\quad
	f^{(2)}({\bf{x}}) =	\rm{vec}({\bf{x}}{\bf{x}}^\top).
	\end{align*}
	Some special cases of (\ref{1eqn:cauchy_distribution_as_E_alpha}) include the following:	
	\begin{itemize}
		\item[(a)] The usual $d$-dimensional Cauchy distributions 
		correspond to $\beta = (d + 3)/(d + 1)$.
		\item[(b)] The generalized Cauchy distributions studied in 
		\cite{Rider57J} correspond to $\beta = (1 + \omega)/\omega$ and
		$\beta\in(1,3)$.
		\item[(c)] The multivariate truncated generalized Cauchy distributions studied in \cite[Eq. (2.3)]{AteyaM13J} correspond to
		$\beta = 1 + 2/(2\kappa + d)$ where $\kappa$ equals to
		the $\alpha$ in their paper and $\beta\in(1,(d + 2)/d)$.
	\end{itemize}	
	While studying the diffusion problem under L\'evy distributions, Prato and Tsallis \cite[Eq. (10)-(11)]{PratoT99J} found (\ref{1eqn:cauchy_distribution_as_E_alpha}) as the
	maximizer of R\'enyi (or Tsallis) entropy subject
	to linear constraints on the $\alpha$-scaled
	measure of the distribution. In
	\cite{VignatP07J, OharaW10J, GhoshdastidarDB12ISIT, Matsuzoe17J}, these distributions were studied
	as $q$-Gaussian distributions. However, we shall call them simply Cauchy distributions with location parameter $\boldsymbol{\mu}$ and scale parameter $\boldsymbol{\Sigma}$.	}
\end{example}

Observe that the functions $w$ and $f$ in Cauchy distribution as in (\ref{1eqn:cauchy_distribution_as_E_alpha}) are the same as 
the ones in Student distribution (\ref{1eqn:student_distribution_as_M_alpha_1}). Thus
by a similar argument as described in Corollary
\ref{1cor:Student_regular_M_alpha}, we can show that 
Cauchy distributions form a $d(d + 3)/2$-parameter regular $\mathscr{E}
^{(\beta)}$-family for $\beta\in(1,{(d+2)}\big/{d})$. Note that for $\beta\notin(1,{(d+2)}\big/{d})$, they do not define a
regular family because, in this case, the support depends on the unknown parameters.
\vspace{0.2cm}

\begin{example}
	\label{1expl:student_t_E_alpha}
\emph{	Consider the Student distributions as in 
	(\ref{1eqn:student_distribution_as_M_alpha_1}). In view
	of Remark \ref{1rem:M_alpha_E_alpha}(a), these form an
	$\mathscr{E}^{(\alpha')}$-family, where $\alpha'=2-\alpha$ (that is,
	$\alpha'\in (-\infty,1)\cup (1,(d+2)/d)\cup ((d+2)/d,\infty)$ when $d\leq 2$, and $\alpha'\in (-\infty,1)\cup (1,(d+2)/d)\cup (2,\infty)$ otherwise), characterized by the same functions as in 
	(\ref{1eqn:charecterizing_entities_of_student_as_M}). Student distributions as $\mathscr{E}^{(\alpha')}$ is studied in the literature, for example, in \cite{OharaW10J, Matsuzoe17J}. Observe that, when $\alpha'\in (1, (d+2)/d)$, they indeed form a regular $\mathscr{E}^{(\alpha')}$ as this corresponds to $\alpha\in((d-2)/d,1)$ in their $\mathbb{M}^{(\alpha)}$ form. }   
\end{example}
\vspace{0.2cm}

\begin{remark}
	\label{1rem:analogy_between_alpha_families_exponential_family}
\emph{Consider an exponential family of probability distributions where, for ${\bf{x}}\in\mathbb{S}$, 
	\begin{align}
	p_\theta({\bf{x}}) 
	&= Z(\theta)  \exp \big[h({\bf{x}}) +w(\theta)^\top f({\bf{x}})\big].
	\label{1eqn:exponential_family_as_E_alpha}
	\end{align}
	Then each member of this family can also be re-written in any of the following equivalent forms: 
	\begin{align}
	p_\theta({\bf{x}})^{-1} 
	&= Z(\theta)^{-1} \exp \big[-h({\bf{x}}) - w(\theta)^\top f({\bf{x}})\big]\quad \text{or}
	\label{1eqn:exponential_family_as_M_alpha}\\
	p_\theta({\bf{x}})^{-1}
	&=  \exp \big[-h({\bf{x}})-Z'(\theta) -w(\theta)^\top f({\bf{x}})\big],
	\label{1eqn:exponential_family_as_B_alpha}
	\end{align}
	where $Z'(\theta) = \ln Z(\theta)$. Analogous to (\ref{1eqn:exponential_family_as_E_alpha}),
	(\ref{1eqn:exponential_family_as_M_alpha}) and
	(\ref{1eqn:exponential_family_as_B_alpha}), respectively, the
	probability distributions in
	$\mathscr{E}^{(\alpha)}$, $\mathbb{M}^{(\alpha)}$ and
	$\mathbb{B}^{(\alpha)}$-families can be expressed, for 
	${\bf{x}}\in\mathbb{S}$, as
	\begin{align*}
	p_\theta({\bf{x}}) &= Z(\theta)  e_\alpha \big[h({\bf{x}}) +w(\theta)^\top f({\bf{x}})\big],\\
	p_\theta({\bf{x}})^{-1}& = Z(\theta)^{-1}~  e_\alpha \big[
	-h({\bf{x}})-w(\theta)^\top f({\bf{x}})\big],\\
	p_\theta({\bf{x}})^{-1}& =  e_\alpha \big[-h({\bf{x}})
	-Z'(\theta)-w(\theta)^\top f({\bf{x}})\big],
	\end{align*}
	where the $\alpha$-exponential function $e_\alpha:[-\infty,\infty]\to
	(-\infty,\infty]$ is defined as
	\begin{eqnarray*}
		{e_\alpha(r)} = \left\{
		\begin{array}{ll}
			{[\max\{1+(1-\alpha)r,0\}]^{1/(1-\alpha)},} &\hbox{} 
			\alpha\neq 1,\\
			{\exp(r),} &\hbox{}\alpha=1.
		\end{array}
		\right.
	\end{eqnarray*}
	The $\alpha$-exponential function coincides with the usual exponential function as $\alpha\to 1$. Hence the families $\mathscr{E}^{(\alpha)}$, $\mathbb{M}^{(\alpha)}$ and
	$\mathbb{B}^{(\alpha)}$ coincide with the usual exponential family as $\alpha\to 1$. Thus these three power-law families can be seen as generalizations of the exponential family. These power-law families are sometimes known as deformed exponential families (see, for example, \cite{Matsuzoe17J}).}
\end{remark}
\section{Projection theorems for general power-law families}
\label{1sec:estimating_equation_for_the_general_family}
In this section, we extend the projection theorems of $B_\alpha$, $\mathscr{I}_\alpha$ and $D_\alpha$ divergences to the general power-law families by directly solving the associated estimating equations. We also find conditions under which the new projection theorems reduce to the ones as in the canonical case.
We shall begin by recalling the projection theorems known in the literature. In the following, assume that the families are canonical and regular with support $\mathbb{S}$ being finite and the parameter space $\Theta$ being the natural parameter space. Let $p_n$ denote the empirical distribution of  sample ${X}_1,\ldots,{X}_n$.

\begin{enumerate}
	
	\item[(a)] {\em Projection theorem for $B_\alpha$-divergence}:
	Consider a $\mathbb{B}^{(\alpha)}$-family characterized by $f$ and $h=q^{\alpha-1}$, where $q$ is a probability distribution with support $\mathbb{S}$. The reverse $B_\alpha$-projection of $p_n$ on $\mathbb{B}^{(\alpha)}$ satisfies
	\begin{eqnarray}
	\label{1eqn:estimating_equation_for_regular_B_alpha}
	\mathbb{E}_\theta[f(X)] = \bar{f},
	\end{eqnarray}
	where $\bar{f} := [\bar{f_1},\ldots, \bar{f_k}]^\top $, $\bar{f_i} := \frac{1}{n}\sum_{j=1}^{n}f_i(X_j)$ for $i\in\{1,\ldots,k\}$ and $\mathbb{E}_\theta[\cdots]$ denotes expected value with respect to $p_\theta$. (See Theorem \ref{1thm:orthogonality2} and Remark \ref{1rem:projection_B_alpha_alpha_greater_1}). Ohara and Wada \cite[Prop. 3]{OharaW10J} established (\ref{1eqn:estimating_equation_for_regular_B_alpha}) for the continuous case with $h$ being identically a constant. Csisz\'ar and Mat\'u\v{s} \cite{CsiszarM12J} studied this for the general Bregman divergences.
	
	\vspace*{0.2cm}
	
	\item[(b)] {\em Projection theorem of $\mathscr{I}_\alpha$-divergence}:
	Consider an $\mathbb{M}^{(\alpha)}$-family characterized by $f$ and $h=q^{\alpha-1}$, where $q$ is a probability distribution with support $\mathbb{S}$. The reverse $\mathscr{I}_\alpha$-projection of $p_n$ on $\mathbb{M}^{(\alpha)}$ satisfies
	\begin{eqnarray}
	\label{1eqn:estimating_equation_for_regular_M_alpha}
	\dfrac{\mathbb{E}_\theta\big[f(X)\big]}{\mathbb{E}_\theta[h(X)]} = \dfrac{\bar{f}}{\overline{h}},	\end{eqnarray}
	where $\overline{h} := \frac{1}{n}\sum_{j=1}^n h(X_j)$. This is due to \cite[Theorem 18 and Theorem 21]{KumarS15J2}.
	
	\vspace*{0.2cm}
	
	\item[(c)] {\em Projection theorem of $D_\alpha$-divergence}:
	Consider an $\mathscr{E}^{(\alpha)}$-family characterized by $f$ and  $h=q^{1-\alpha}$, where $q$ is a probability distribution with support $\mathbb{S}$. The reverse $D_\alpha$-projection of $p_n$ on $\mathscr{E}^{(\alpha)}$ satisfies
	\begin{eqnarray}
	\label{1eqn:estimating_equation_for_regular_E_alpha}
	\dfrac{\mathbb{E}_{\theta^{(\alpha)}}\big[f(X)\big]}{\mathbb{E}_{\theta^{(\alpha)}}[h(X)]} = 
	\dfrac{\overline{f}^{(\alpha)}}{\overline{h}^{(\alpha)}},
	\end{eqnarray}
	where $\mathbb{E}_{\theta^{(\alpha)}}[\cdots]$ denotes expectation with respect to $p_{\theta}^{(\alpha)}$, $\overline{h}^{(\alpha)}$ and $\overline{f_i}^{(\alpha)}$ are respectively averages of $h$ and $f_i$ with respect to $p_n^{(\alpha)}$. This is due to \cite[Theorem 6]{KumarS16J}.
\end{enumerate}
Before we turn to the main results of this section we prove the following lemma and corollary that establish a connection
between the generalized Hellinger and the Jones et al. estimating equations.
\vspace{0.2cm}

\begin{lemma}
	\label{1lem:connection_of_estimating_equation_and_projection_equation}
	The estimating equations (\ref{1eqn:score_equation_for_D_alpha_discrete_case}) and (\ref{1eqn:score_equation_I_alpha}) are the same up-to the transformation
	$p\mapsto p^{(\alpha)}$ when $\mathbb{S}$ is discrete. In the
	continuous case the same is true between (\ref{1eqn:score_equation_for_D_alpha}) and (\ref{1eqn:score_equation_I_alpha}) provided the empirical distribution $p_n$ is replaced by a continuous estimate $\widetilde{p}_n$ and $\int \nabla[p_\theta ({\bf{x}})] d{\bf{x}} = 0$.
\end{lemma}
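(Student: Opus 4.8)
The goal is to show that the generalized Hellinger estimating equation and the Jones et.~al.\ equation coincide after the escort transformation $p\mapsto p^{(\alpha)}$. The natural strategy is to write down both equations explicitly and substitute $p^{(\alpha)}$ into one of them, then verify that the normalizing constants cancel and the score functions transform compatibly. The plan is to start from the Jones equation (\ref{1eqn:score_equation_I_alpha}) and rewrite it in terms of the escort densities of $p_\theta$ and $p_n$.

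First I would recall that $p_\theta^{(\alpha)}({\bf x}) = p_\theta({\bf x})^\alpha / \int p_\theta({\bf y})^\alpha d{\bf y}$, so that $p_\theta({\bf x})^{\alpha-1} = c(\theta)\, p_\theta^{(\alpha)}({\bf x})/p_\theta({\bf x})$ for a $\theta$-dependent constant $c(\theta) = \int p_\theta^\alpha\, d{\bf y}$. The key computation is to relate the score $s({\bf x};\theta) = \nabla\log p_\theta({\bf x})$ of the original model to the score $s^{(\alpha)}({\bf x};\theta) = \nabla\log p_\theta^{(\alpha)}({\bf x})$ of the escort model. Differentiating $\log p_\theta^{(\alpha)} = \alpha\log p_\theta - \log c(\theta)$ gives $s^{(\alpha)} = \alpha\, s - \nabla\log c(\theta)$, where $\nabla\log c(\theta) = \mathbb{E}_{\theta^{(\alpha)}}[\alpha\, s]$ by differentiating under the integral sign. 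Thus $s^{(\alpha)}$ is simply an affine function of $s$, and the additive term is the escort-averaged score.

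In the discrete case I would substitute these relations directly into (\ref{1eqn:score_equation_I_alpha}): the numerator $\frac1n\sum_j p_\theta(X_j)^{\alpha-1}s(X_j;\theta)$ and denominator $\frac1n\sum_j p_\theta(X_j)^{\alpha-1}$ both involve the empirical measure weighted by $p_\theta^{\alpha-1}$, which after multiplying through becomes a statement about $p_n^{(\alpha)}$ and $p_\theta^{(\alpha)}$. The right-hand side of (\ref{1eqn:score_equation_I_alpha}) is likewise a ratio of escort expectations. Rearranging the ratio equation into the product form and using $p_n^\alpha p_\theta^{1-\alpha}$ as the weight, one recovers exactly (\ref{1eqn:score_equation_for_D_alpha_discrete_case}) written for the escort densities; this is where the transformation $p\mapsto p^{(\alpha)}$ enters. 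The cancellation of the normalizing constant $c(\theta)$ between numerator and denominator of the ratio is the algebraic heart of the argument.

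In the continuous case the same manipulation goes through, but with $p_n$ replaced by its continuous estimate $\widetilde p_n$, and one extra subtlety appears: passing between (\ref{1eqn:score_equation_for_D_alpha}) and (\ref{1eqn:score_equation_I_alpha}) requires that the additive correction term $\nabla\log c(\theta)$ integrate to zero against the relevant weight, which is exactly the hypothesis $\int\nabla[p_\theta({\bf x})]\,d{\bf x}=0$ (equivalently, differentiation and integration commute so that $\int p_\theta\, s\, d{\bf x}=0$). I expect this integrability/interchange condition to be the main obstacle: in the discrete case it holds automatically because finite sums always commute with differentiation, but in the continuous case one must assume it, which is precisely why the lemma states the hypothesis $\int\nabla[p_\theta({\bf x})]\,d{\bf x}=0$ only for the continuous case. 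Verifying that this single condition is exactly what makes the affine correction term vanish, and hence makes the two equations agree, is the crux of the proof.
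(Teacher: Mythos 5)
Your proposal matches the paper's proof in all essentials: the escort map, the affine relation between $s$ and $s^{(\alpha)}$ (the paper writes it as $s=\tfrac{1}{\alpha}s^{(\alpha)}+\nabla\log\|p_\theta\|$, which is your relation inverted), the cancellation of the $\theta$-dependent constants in the normalized ratios, and the use of $\int\nabla [p_\theta(\mathbf{x})]\,d\mathbf{x}=0$ to replace the zero right-hand side of the Hellinger equation by $\int p_\theta\, s\,d\mathbf{x}$; the only difference is that the paper argues from the Hellinger equation toward the Jones et al.\ equation rather than the reverse, and every step is reversible. One small clarification: the correction term $\nabla\log c(\theta)$ does not ``integrate to zero against a weight'' --- it is a constant in $\mathbf{x}$ that appears identically on both sides of an equality of normalized weighted averages and therefore cancels, whereas the hypothesis $\int\nabla[p_\theta(\mathbf{x})]\,d\mathbf{x}=0$ enters at the separate step your parenthetical correctly identifies, namely guaranteeing $\int p_\theta\, s\,d\mathbf{x}=0$.
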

\vspace{0.2cm}

\begin{proof}
	We present the proof for the discrete case. The proof for the continuous case follows by a similar argument if we replace $p_n$ by $\widetilde{p}_n$ throughout in the proof.
	
	The generalized Hellinger estimating equation (\ref{1eqn:score_equation_for_D_alpha_discrete_case}) can be re-written as
	
	\begin{eqnarray*}
	\label{1eqn:modified_estimating_equation_for_D_alpha}
	\dfrac{\sum\limits_{{\bf{x}}\in\mathbb{S}}p_n({\bf{x}})^\alpha p_\theta({\bf{x}})^{1-\alpha}s({\bf{x}};\theta)}{\sum\limits_{{\bf{x}}\in\mathbb{S}}p_n({\bf{x}})^\alpha p_\theta({\bf{x}})^{1-\alpha}} =
	{\sum\limits_{{\bf{{\bf{x}}}}\in\mathbb{S}}p_\theta({\bf{x}})s({\bf{x}};\theta)},
	\end{eqnarray*}
	since $\sum\limits_{{\bf{x}}\in\mathbb{S}}p_\theta({\bf{x}})s({\bf{x}};\theta) = 0$. This can further be re-written as
	\begin{align}
	\label{1eqn:estimating _equation_of_D_alpha_after_transformation}
	\dfrac{\sum\limits_{{\bf{x}}\in\mathbb{S}}p_n^{(\alpha)}({\bf{x}}) \big[ p^{(\alpha)}_\theta({\bf{x}})\big]
		^{\tfrac{1}{\alpha}-1}s({\bf{x}};\theta)}{\sum\limits_{{\bf{x}}\in\mathbb{S}} p_n^{(\alpha)} ({\bf{x}})
		\big[p_\theta^{(\alpha)}({\bf{x}})\big] ^{\tfrac{1}{\alpha}-1}}
	=
	\dfrac{\sum\limits_{{\bf{x}}\in\mathbb{S}}\big[ p^{(\alpha)}_\theta({\bf{x}})\big]^{\tfrac{1}{\alpha}} s({\bf{x}};\theta)}
	{\sum\limits_{{\bf{x}}\in\mathbb{S}}\big[p^{(\alpha)}_\theta({\bf{x}})\big]^{\tfrac{1}{\alpha}}}. 
	\end{align}
	Observe that
	\begin{align*}
	s^{(\alpha)}({\bf{x}};\theta) := \nabla\ln p^{(\alpha)}_\theta({\bf{x}}) = 
	\nabla\ln \frac{p_\theta({\bf{x}})^\alpha}{\|p_\theta\|^\alpha}
	= \nabla \big[ \ln p_\theta ({\bf{x}})^\alpha -\ln \|p_\theta\|^\alpha\big]
	= \alpha \big[s({\bf{x}};\theta) - \nabla\ln \|p_\theta\| \big].
	\end{align*}
	Hence
	\begin{eqnarray}
	\label{1eqn:s_in_terms_of_s_alpha}
	s({\bf{x}};\theta) = \tfrac{1}{\alpha} s^{(\alpha)}({\bf{x}};\theta) + A(\theta),
	\end{eqnarray}
	where $A(\theta) = \nabla\ln \|p_\theta\|$. Plugging (\ref{1eqn:s_in_terms_of_s_alpha}) in (\ref{1eqn:estimating _equation_of_D_alpha_after_transformation}),
	\begin{align*}
	\label{1eqn:estimating _equation_of_D_alpha_after_transformation_final}
	\dfrac{\sum\limits_{{\bf{x}}\in\mathbb{S}}p_n^{(\alpha)}({\bf{x}}) \big[ p^{(\alpha)}_\theta({\bf{x}})\big]
		^{\tfrac{1}{\alpha}-1}s^{(\alpha)}({\bf{x}};\theta)}{\sum\limits_{{\bf{x}}\in\mathbb{S}}p_n^{(\alpha)} ({\bf{x}})
		\big[p_\theta^{(\alpha)}({\bf{x}})\big] ^{\tfrac{1}{\alpha}-1}}  
	=
	\dfrac{\sum\limits_{{\bf{x}}\in\mathbb{S}}\big[ p^{(\alpha)}_\theta({\bf{x}})\big]^{\tfrac{1}{\alpha}} s^{(\alpha)}({\bf{x}};\theta)}
	{\sum\limits_{{\bf{x}}\in\mathbb{S}}\big[ p^{(\alpha)}_\theta({\bf{x}})\big]^{\tfrac{1}{\alpha}}}. 
	\end{align*}
	This is same as the Jones et al. estimating equation (\ref{1eqn:score_equation_I_alpha}) with $p_n$, $p_\theta$ and $\alpha$, respectively, replaced by $p_n^{(\alpha)}$, $p_\theta^{(\alpha)}$ and $1/\alpha$.
\end{proof}

This, together with Lemma \ref{1lem:equivalence_of_E_alpha_and_I_alpha},
establishes the following equivalence between the Jones et al. estimation and generalized Hellinger estimation.
\vspace{0.2cm}

\begin{corollary}
	\label{1cor:equivalence_of_projection_problem_E_alpha_M_alpha}
	Suppose that $\mathscr{E}^{(\alpha)}$ is an $\alpha$-exponential family characterized by $h,w,f,\Theta$ where all the distributions have a common support $\mathbb{S}$. Then, under the assumptions of Lemma \ref{1lem:connection_of_estimating_equation_and_projection_equation}, solving the generalized Hellinger estimation problem under $\mathscr{E}^{(\alpha)}$-family is equivalent to solving the Jones et al. estimation problem under the $\mathbb{M}^{(1/\alpha)}$-family characterized by the same entities.
\end{corollary}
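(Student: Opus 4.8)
The plan is to observe that a single transformation, the $\alpha$-scaling (escort) map $p\mapsto p^{(\alpha)}$, simultaneously intertwines the two families and the two estimating equations, so that the corollary drops out as the mere composition of Lemma \ref{1lem:equivalence_of_E_alpha_and_I_alpha} and Lemma \ref{1lem:connection_of_estimating_equation_and_projection_equation}. In one sentence: this map carries the feasible set of the generalized Hellinger problem onto the feasible set of the Jones {\em et. al.} problem (by Lemma \ref{1lem:equivalence_of_E_alpha_and_I_alpha}) while carrying the defining equation of the former onto the defining equation of the latter (by Lemma \ref{1lem:connection_of_estimating_equation_and_projection_equation}); a bijection with both properties sends solutions to solutions, and conversely, giving the asserted equivalence.

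First I would fix an arbitrary $p_\theta$ in the $\mathscr{E}^{(\alpha)}$-family characterized by $h,w,f,\Theta$ and form its $\alpha$-scaled measure $q:=p_\theta^{(\alpha)}$. Since $\alpha\neq 0$, Lemma \ref{1lem:equivalence_of_E_alpha_and_I_alpha} tells us that $q$ is a member of the $\mathbb{M}^{(1/\alpha)}$-family characterized by the {\em same} entities $h,w,f,\Theta$, and that as $p_\theta$ ranges over $\mathscr{E}^{(\alpha)}$ the image $q$ ranges, bijectively, over all of $\mathbb{M}^{(1/\alpha)}$. Next I would apply Lemma \ref{1lem:connection_of_estimating_equation_and_projection_equation}: the generalized Hellinger estimating equation (\ref{1eqn:score_equation_for_D_alpha_discrete_case}) (or its continuous form (\ref{1eqn:score_equation_for_D_alpha})) written for $p_\theta$ coincides, under $p\mapsto p^{(\alpha)}$, with the Jones {\em et. al.} estimating equation (\ref{1eqn:score_equation_I_alpha}) written for $q$ (in the continuous case under the stated proviso that $\int\nabla[p_\theta({\bf{x}})]\,d{\bf{x}}=0$ and that the empirical measure is replaced by a smoothed estimate $\widetilde{p}_n$). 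Consequently $p_\theta$ solves the generalized Hellinger problem over $\mathscr{E}^{(\alpha)}$ if and only if its escort $q$ solves the Jones {\em et. al.} problem over $\mathbb{M}^{(1/\alpha)}$, and the bijection of the first step upgrades this pointwise statement to an equivalence of the two estimation problems.

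The one place where care is needed --- and where I expect the main obstacle to lie --- is the bookkeeping of the divergence index and of the data-side normalization. Lemma \ref{1lem:connection_of_estimating_equation_and_projection_equation} transports the equation along $p\mapsto p^{(\alpha)}$, whereas Lemma \ref{1lem:equivalence_of_E_alpha_and_I_alpha} transports the family along the {\em same} map but relabels the exponent from $\alpha$ to $1/\alpha$; I would therefore verify that the escort exponent used on the model side agrees with the one implicit in the data-side scaling $p_n\mapsto p_n^{(\alpha)}$, so that the transformed equation is exactly the Jones {\em et. al.} equation attached to the index $1/\alpha$ of the target family $\mathbb{M}^{(1/\alpha)}$. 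Once this matching of indices and of the normalizing denominators is checked, no further computation is required, and the corollary is established as the composite of the two lemmas.
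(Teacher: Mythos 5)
Your proposal is correct and follows essentially the same route as the paper, which derives the corollary simply by combining Lemma \ref{1lem:equivalence_of_E_alpha_and_I_alpha} (the escort map $p\mapsto p^{(\alpha)}$ is a bijection between $\mathscr{E}^{(\alpha)}$ and the $\mathbb{M}^{(1/\alpha)}$-family with the same characterizing entities) with Lemma \ref{1lem:connection_of_estimating_equation_and_projection_equation} (the same map carries the generalized Hellinger estimating equation onto the Jones \emph{et.~al.} equation with index $1/\alpha$). Your added check that the escort exponent on the model side matches the data-side scaling is exactly the bookkeeping already done in the proof of Lemma \ref{1lem:connection_of_estimating_equation_and_projection_equation}, so nothing further is needed.
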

\vspace{0.2cm}

The following result extends the already known projection theorems of the divergences $B_\alpha$, $\mathscr{I}_\alpha$ and $D_\alpha$ to the general power-law families as defined in Section \ref{1sec:the_general_form_of_the_alpha_families}.
\vspace{0.2cm}

\begin{theorem}
	\label{1thm:estimating_equations_for_general_families} 
	Let ${\bf{X}}_1,\ldots,{\bf{X}}_n$ be $n$ i.i.d. samples. Let $\Pi$ be one of the families $\mathbb{B}^{(\alpha)}$, $\mathbb{M}^{(\alpha)}$, or $\mathscr{E}^{(\alpha)}$ and assume that support of $\Pi$ does not depend on the parameter space $\Theta$. In (c), assume also that $\int {\partial_r}[p_\theta ({\bf{x}})] d{\bf{x}} = 0$ for $r\in\{1,\ldots,k\}$. Then the following hold.
	
	\begin{itemize}
		\item[(a)] Basu et al. estimator under 
		$\mathbb{B}^{(\alpha)}$ must satisfy
		\begin{equation}
		\label{1eqn:estimating_equation_for_general_exponential_family}
		\partial _r [w (\theta)]^\top \mathbb{E}_\theta [f(\textbf{X})] = 
		\partial _r [w (\theta)]^\top \bar{f},\quad\text{for}~r\in\{1,\ldots,k\}.
		\end{equation}
		\item[(b)] Jones et al. estimator under $\mathbb{M}^{(\alpha)}$ must satisfy
		\begin{equation}
		\label{1eqn:estimating_equation_for_general_M_alpha_family}
		\frac{ \partial_r [w(\theta)]^\top \mathbb{E}_\theta [f(\textbf{X})]}
		{\mathbb{E}_\theta [h(\textbf{X}) +  
			w(\theta)^\top f(\textbf{X}) ]} = 
		\frac{ \partial_r [w(\theta)]^\top \bar{f}}
		{\bar{h} +   w(\theta)^\top \bar{f}},
		\quad\text{for}~r\in\{1,\ldots,k\}.
		\end{equation}
		\item[(c)] Generalized Hellinger estimator under $\mathscr{E}^{(\alpha)}$
		must satisfy
		\begin{eqnarray}
		\label{1eqn:estimating_equation_for_general_E_alpha_family}
		\frac{\partial_r [w(\theta)]^\top \mathbb{E}_{\theta^{(\alpha)}} [f(\textbf{X})]}{\mathbb{E}_{\theta^{(\alpha)}} [h(\textbf{X}) +  w(\theta)^\top f(\textbf{X})]} = 
		\frac{\partial_r [w(\theta)]^\top \overline{f}^{(\alpha)}}
		{\overline{h}^{(\alpha)} + w(\theta)^\top \overline{f}^{(\alpha)}},\quad\text{for}~r\in\{1,\ldots,k\}.
		\end{eqnarray}
	\end{itemize}
	Here $\partial _r [w (\theta)]: = \big[ \tfrac{\partial}{\partial \theta_r} [w_1(\theta)], \ldots, \tfrac{\partial}{\partial \theta_r} [w_s(\theta
	)]\big]^\top$ for $r\in\{1,\ldots,k\}$. In (c), $\overline{h}^{(\alpha)}: = \mathbb{E}_{\widetilde{p}_n^{(\alpha)}}
	[h(\textbf{\textit{X}})]$ and
	$\overline{f}^{(\alpha)} :=\mathbb{E}_{\widetilde{p}_n^{(\alpha)}} [f(\textbf{\textit{X}})]$, where $\widetilde{p}_n$ is  the
	empirical distribution $p_n$ in the discrete case; a suitable continuous estimate of ${p}_n$ in the continuous case.
\end{theorem}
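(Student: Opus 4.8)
The plan is to handle all four parts by a single mechanism: substitute the explicit functional form of the relevant family into its associated estimating equation, compute the (possibly weighted) score $s_r({\bf{x}};\theta)=\partial_r\log p_\theta({\bf{x}})$, and exploit the fact that the power-law structure renders the weighted score \emph{affine} in $f$. Parts (a) and (b) will then collapse to the single linear relation (\ref{1eqn:estimating_equation_for_general_exponential_family}); part (c) will yield the ratio (\ref{1eqn:estimating_equation_for_general_M_alpha_family}); and part (d) will be reduced to (c) through the escort correspondence already established in Lemma \ref{1lem:equivalence_of_E_alpha_and_I_alpha} and Corollary \ref{1cor:equivalence_of_projection_problem_E_alpha_M_alpha}.

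For (a), I would first note that for the $\mathcal{E}$-family $\log p_\theta({\bf{x}})=h({\bf{x}})+Z(\theta)+w(\theta)^Tf({\bf{x}})$, so $s_r({\bf{x}};\theta)=\partial_rZ(\theta)+\partial_r[w(\theta)]^Tf({\bf{x}})$, which is affine in $f$. Averaging over the sample and setting the MLE score equation (\ref{1eqn:score_equation_mle_in_terms_of_sample}) to zero gives $\partial_rZ(\theta)+\partial_r[w(\theta)]^T\bar f=0$. The remaining step is to eliminate $\partial_rZ(\theta)$: writing $\partial_r[p_\theta]=p_\theta\,\partial_r\log p_\theta$ and invoking the hypothesis $\int\partial_r[p_\theta({\bf{x}})]d{\bf{x}}=0$ (which is exactly where that assumption enters) yields $\partial_rZ(\theta)=-\partial_r[w(\theta)]^T\mathbb{E}_\theta[f({\bf{X}})]$, and substituting back produces (\ref{1eqn:estimating_equation_for_general_exponential_family}). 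For (b), the key observation is that $p_\theta({\bf{x}})^{\alpha-1}=h({\bf{x}})+F(\theta)+w(\theta)^Tf({\bf{x}})$, so the weighted score simplifies to $p_\theta^{\alpha-1}s_r=\tfrac{1}{\alpha-1}\big(\partial_rF(\theta)+\partial_r[w(\theta)]^Tf({\bf{x}})\big)$, again affine in $f$ and free of the awkward denominator. Feeding this into both sides of the Basu {\em et. al.} equation (\ref{1eqn:score_equation_B_alpha}), the left side becomes $\tfrac{1}{\alpha-1}(\partial_rF+\partial_r[w]^T\bar f)$ and the right side, using only $\int p_\theta=1$ via $p_\theta^\alpha=p_\theta\cdot p_\theta^{\alpha-1}$, becomes $\tfrac{1}{\alpha-1}(\partial_rF+\partial_r[w]^T\mathbb{E}_\theta[f])$; the $\partial_rF$ terms cancel and (\ref{1eqn:estimating_equation_for_general_exponential_family}) follows. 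No extra normalization hypothesis is needed here, since the right-hand weight already reconstructs a genuine $p_\theta$-expectation.

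For (c), I would write $p_\theta^{\alpha-1}=Z(\theta)^{\alpha-1}g_\theta({\bf{x}})$ with $g_\theta({\bf{x}}):=h({\bf{x}})+w(\theta)^Tf({\bf{x}})$, so that $s_r=\partial_r\log Z(\theta)+\tfrac{1}{\alpha-1}\,\partial_r[w(\theta)]^Tf({\bf{x}})/g_\theta({\bf{x}})$. Inserting these into the Jones {\em et. al.} ratio (\ref{1eqn:score_equation_I_alpha}), the common factor $Z^{\alpha-1}$ and the factor $g_\theta$ carried by each weight cancel: the left side reduces to $\partial_r\log Z(\theta)+\tfrac{1}{\alpha-1}\,\partial_r[w]^T\bar f\big/(\bar h+w^T\bar f)$, and the right side to the same expression with the sample averages replaced by $p_\theta$-expectations and $\bar h+w^T\bar f$ replaced by $\mathbb{E}_\theta[h+w^Tf]$. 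The $\partial_r\log Z(\theta)$ terms then cancel across the equality and the common $\tfrac{1}{\alpha-1}$ drops, leaving precisely (\ref{1eqn:estimating_equation_for_general_M_alpha_family}).

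Finally, for (d), I would invoke Corollary \ref{1cor:equivalence_of_projection_problem_E_alpha_M_alpha}: under the stated hypothesis $\int\partial_r[p_\theta]=0$, the generalized Hellinger problem on $\mathscr{E}^{(\alpha)}$ is equivalent, via the escort map $p\mapsto p^{(\alpha)}$ of Lemma \ref{1lem:equivalence_of_E_alpha_and_I_alpha}, to the Jones {\em et. al.} problem on the $\mathbb{M}^{(1/\alpha)}$-family characterized by the same $h,w,f$; applying part (c) to that family and translating the $p_\theta$-expectations into $p_\theta^{(\alpha)}$-expectations and the sample averages into $\widetilde p_n^{(\alpha)}$-averages gives (\ref{1eqn:estimating_equation_for_general_E_alpha_family}). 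The main obstacle throughout is the algebraic bookkeeping in (c): one must check that every $Z(\theta)$-dependent factor cancels and that the two $\partial_r\log Z(\theta)$ contributions genuinely coincide before they are cancelled; and in (d) one must verify that \emph{both} the model expectations and the empirical averages are transported consistently under the escort map, so that the ratio inherited from (c) is the one stated.
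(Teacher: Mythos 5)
Your proposal is correct and follows essentially the same route as the paper: differentiate the explicit functional form of each family so that the (weighted) score becomes affine in $f$, substitute into the respective estimating equation, cancel the $\theta$-only terms ($\partial_r Z$, $\partial_r F$, or $\partial_r \log Z$), and reduce part (d) to part (c) via the escort correspondence of Lemma \ref{1lem:equivalence_of_E_alpha_and_I_alpha} and Lemma \ref{1lem:connection_of_estimating_equation_and_projection_equation}. The only cosmetic difference is that you phrase the computation through the score $s_r$ and the product $p_\theta^{\alpha-1}s_r$, whereas the paper works directly with $\partial_r[p_\theta]$; the algebra is identical.
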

\vspace{0.2cm}

\begin{proof}
(a) If $p_\theta\in \mathbb{B}^{(\alpha)}$ then from
		Definition \ref{1defn:general_B_alpha_family}, for
		${\bf{x}}\in\mathbb{S}$, 
		\begin{equation*}
		p_\theta({\bf{x}}) ^{\alpha - 1}
		= h({\bf{x}}) + F(\theta) + w(\theta)^\top f({\bf{x}}).
		\end{equation*}
		Taking derivative with respect to $\theta_r$ for
		$r\in\{1,\ldots,k\}$,
		\begin{align}
		\label{1eqn:derivative_of_general_B_alpha}
		(\alpha -1)p_\theta({\bf{x}})^{\alpha -2}\partial_r [p_\theta ({\bf{x}})] &= \partial_r[F(\theta)] + \partial_r[w(\theta)]^\top
		f({\bf{x}}).
		\end{align}
		The Basu et al. estimating equation
		(\ref{1eqn:score_equation_B_alpha}) can be re-written as
		\begin{equation}
		\label{1eqn:estimating_equation_B_alpha_without_nabla}
		\frac{1}{n} \sum\limits_{j=1}^n p_\theta(\textbf{X}_j)^{\alpha -2}
		\partial_r [p_\theta (\textbf{X}_j)] =
		\int p_\theta({\bf{x}})^{\alpha -1} \partial_r [p_\theta ({\bf{x}})] d{\bf{x}}.
		\end{equation}
		Substituting (\ref{1eqn:derivative_of_general_B_alpha}) in
		(\ref{1eqn:estimating_equation_B_alpha_without_nabla}), we get
		(\ref{1eqn:estimating_equation_for_general_exponential_family}).\\
		
		(b) If $p_\theta\in \mathbb{M}^{(\alpha)}$, using
		Definition \ref{1defn:general_M_alpha_family}, for
		${\bf{x}}\in\mathbb{S}$,
		\begin{equation*}
		p_\theta({\bf{x}})^{\alpha -1} = Z(\theta)^{\alpha -1}[h({\bf{x}})
		+w(\theta)^\top f({\bf{x}})].
		\end{equation*}	
		Taking derivative with respect to $\theta_r$ for
		$r\in\{1,\ldots,k\}$, we get
		\begin{align*}
		(\alpha -1)p_\theta({\bf{x}})^{\alpha -2} 	
		\partial_r [p_\theta ({\bf{x}})]&= \partial_r[Z(\theta)^{\alpha - 1}][h({\bf{x}})
		+ w(\theta)^\top f({\bf{x}})] + Z(\theta)^{\alpha - 1}\partial_r[w(\theta)]^\top f({\bf{x}}).
		\end{align*}
		Substituting this in the Jones et al. estimating equation (\ref{1eqn:score_equation_I_alpha}),
		\begin{equation*}
		\frac{ \partial_r [w(\theta)]^\top \mathbb{E}_\theta [f(\textbf{X})]}
		{\mathbb{E}_\theta [h(\textbf{X}) +  
			w(\theta)^\top f(\textbf{X}) ]} = 
		\frac{ \partial_r [w(\theta)]^\top \bar{f}}
		{\bar{h} +   w(\theta)^\top \bar{f}},
		\quad r\in\{1,\ldots,k\}.
		\end{equation*}
		\noindent
		
(c) This follows from (b) and Corollary \ref{1cor:equivalence_of_projection_problem_E_alpha_M_alpha}.
\end{proof}
\vspace{0.2cm}

\begin{remark}
	\begin{itemize}
\item[(a)] Jones et al. and generalized Hellinger estimation under $\mathbb{B}^{(\alpha)}$:\emph{ Recall that a $\mathbb{B}^{(\alpha)}$-family can
		be expressed as an $\mathbb{M}^{(\alpha)}$-family
		as in (\ref{1eqn:B_alpha_to_M_alpha_1}) or
		(\ref{1eqn:B_alpha_to_M_alpha_2}). This implies that
		the Jones et al.
		estimator under $\mathbb{B}^{(\alpha)}$-family satisfies
		(\ref{1eqn:estimating_equation_for_general_M_alpha_family}) with
		$w$, $f$ replaced by $\widetilde{w},\widetilde{f}$ as defined in
		(\ref{1eqn:B_alpha_to_M_alpha_1}) or
		(\ref{1eqn:B_alpha_to_M_alpha_2}).
		Further, in view of Remark \ref{1rem:M_alpha_E_alpha}(a),
		(\ref{1eqn:B_alpha_to_M_alpha_1}) or
		(\ref{1eqn:B_alpha_to_M_alpha_2})
		is also an
		$\mathscr{E}^{(\alpha')}$-family where $\alpha'=2-\alpha$. Thus
		the $\alpha'$-generalized Hellinger estimator
		under $\mathbb{B}^{(\alpha)}$-family must satisfy
		(\ref{1eqn:estimating_equation_for_general_E_alpha_family})
		with $w$, $f$ and $\alpha$ replaced, respectively, by $\widetilde{w},\widetilde{f}$ and $\alpha'$.}
 \item[(b)] Basu et al. and generalized Hellinger estimation under $\mathbb{M}^{(\alpha)}$:\emph{ An $\mathbb{M}^{(\alpha)}$-family
		can be expressed as a $\mathbb{B}^{(\alpha)}$-family as in
		(\ref{1eqn:M_alpha_to_B_alpha}). Thus the Basu et al. estimator
		under an $\mathbb{M}^{(\alpha)}$-family must satisfy
		(\ref{1eqn:estimating_equation_for_general_exponential_family})
		with $w$ and $f$ replaced by $\widetilde{w},\widetilde{f}$ as defined in
		(\ref{1eqn:M_alpha_to_B_alpha}). Further, in view of Remark \ref{1rem:M_alpha_E_alpha}(a), the $\alpha'$-generalized Hellinger estimator
		under $\mathbb{M}^{(\alpha)}$-family must satisfy
		(\ref{1eqn:estimating_equation_for_general_E_alpha_family})
		with $\alpha$ replaced by $\alpha'$.}
		\item[(c)] Basu et al. and Jones et al. estimation under $\mathscr{E}^{(\alpha)}$:\emph{ An $\mathscr{E}^{(\alpha)}$-family
		can be expressed as an $\mathbb{M}^{(\alpha')}$-family as in
		(\ref{1eqn:E_alpha_as_M_alpha}), and hence can be expressed
		as a $\mathbb{B}^{(\alpha')}$-family as in
		(\ref{1eqn:M_alpha_to_B_alpha}) with $\alpha$ replaced by $\alpha'$.
		Thus the $\alpha'$-Jones et al. estimator under
		$\mathscr{E}^{(\alpha)}$-family satisfies
		(\ref{1eqn:estimating_equation_for_general_M_alpha_family}).
		Similarly the $\alpha'$-Basu et al. estimator under
		$\mathscr{E}^{(\alpha)}$-family satisfies
		(\ref{1eqn:estimating_equation_for_general_exponential_family})
		with $w$ and $f$ replaced by $\widetilde{w},\widetilde{f}$ as in
		(\ref{1eqn:M_alpha_to_B_alpha}).
		}
	\end{itemize}
\end{remark}
\vspace{0.2cm}

We now show that, when the families are regular, the projection equations in Theorem \ref{1thm:estimating_equations_for_general_families} reduce to the one as in the canonical case.
\vspace{0.2cm}

\begin{corollary}
	\label{1cor:similarity_of_estimating_equations}
	The estimating equations (\ref{1eqn:estimating_equation_for_general_exponential_family}), (\ref{1eqn:estimating_equation_for_general_M_alpha_family}) and (\ref{1eqn:estimating_equation_for_general_E_alpha_family}), respectively, reduce to (\ref{1eqn:estimating_equation_for_regular_B_alpha}), (\ref{1eqn:estimating_equation_for_regular_M_alpha}) and (\ref{1eqn:estimating_equation_for_regular_E_alpha}) if the underlying families are regular. 
\end{corollary}
\vspace{0.2cm}

\begin{proof}
	Let us first observe that for a regular family the
	matrix $[\partial_i(w_j(\theta))]_{k\times k}$ is non-singular
	for $\theta\in \Theta$.
	To see this, let
	\begin{equation*}
	c_1 \partial_r[w_1(\theta)] +\cdots + c_k\partial_r[w_k(\theta)] =0
	\end{equation*}
	for some scalars $c_1,\ldots,c_k$ and for each $r\in\{1,\dots,k\}$. Then
	\begin{equation*}
	c_1w_1(\theta) +\cdots +c_k w_k(\theta) = c,
	\end{equation*}
	for some constant $c$. Now linear independence of
	$1$, $w_1,\dots,w_k$ implies that $c=c_1=\cdots =c_k
	=0$.
	Consider a regular $\mathbb{B}^{(\alpha)}$-family. Then from (\ref{1eqn:estimating_equation_for_general_exponential_family}), we have
	\begin{equation}
	\label{1eqn:reg_b_alpha_est}
	\partial _r [w (\theta)]^\top \big(\mathbb{E}_\theta [f(\textbf{X})] - \bar{f} \big) = 0~\text{for}~r\in\{1,\dots,k\}.
	\end{equation}
	Since $[\partial_i(w_j(\theta))]_{k\times k}$ is non-singular, (\ref{1eqn:reg_b_alpha_est}) reduces to $\mathbb{E}_\theta [f(\textbf{X})] = \bar{f}$.
	Again (\ref{1eqn:estimating_equation_for_general_M_alpha_family})
	can be re-written as
	\begin{equation*}
	\partial_r [w(\theta)]^\top \Bigg[\frac{ \mathbb{E}_\theta [f(\textbf{X})]}
	{\mathbb{E}_\theta [h(\textbf{X}) + 
		w(\theta)^\top f(\textbf{X}) ]} -
	\frac{\bar{f}}
	{\overline{h} +   w(\theta)^\top \bar{f}}\Bigg]=0.
	\end{equation*}
	For a regular $\mathbb{M}^{(\alpha)}$-family, this reduces to
	\begin{equation}
	\label{1eqn:estimating_equation_regular_M_alpha_1}
	\frac{ \mathbb{E}_\theta [f(\textbf{X})]}
	{\mathbb{E}_\theta [h(\textbf{X}) + 
		w(\theta)^\top f(\textbf{X}) ]} -
	\frac{\bar{f}}
	{\overline{h} + w(\theta)^\top \bar{f}}=0.
	\end{equation}
	This implies
	\begin{eqnarray}
	\overline{h} +  w(\theta)^\top \bar{f}
	=
	\overline{h} + 
	\frac{\overline{h} +  w(\theta)^\top \bar{f}}
	{\mathbb{E}_\theta [h(\textbf{X}) +  
		w(\theta)^\top f(\textbf{X}) ]} 
	w(\theta)^\top  \mathbb{E}_\theta [f(\textbf{X})].\nonumber
	\end{eqnarray}
	That is,
	\begin{eqnarray*}
		\big\lbrace\overline{h} +   w(\theta)^\top \bar{f}\big\rbrace\big\lbrace\mathbb{E}_\theta [h(\textbf{X}) + 
		w(\theta)^\top f(\textbf{X}) ]\big\rbrace
		=
		\overline{h}~\mathbb{E}_\theta [h(\textbf{X}) +  
		w(\theta)^\top f(\textbf{X}) ] + 
		[\overline{h} + w(\theta)^\top \bar{f}]
		w(\theta)^\top  \mathbb{E}_\theta [f(\textbf{X})].\nonumber
	\end{eqnarray*}
	Hence
	\begin{equation*}
	\big\lbrace\overline{h} +  w(\theta)^\top \bar{f}\big\rbrace\mathbb{E}_\theta [h(\textbf{X})]
	=
	\overline{h}~\mathbb{E}_\theta [h(\textbf{X}) + 
	w(\theta)^\top f(\textbf{X}) ].
	\end{equation*}
	Substituting this back
	in (\ref{1eqn:estimating_equation_regular_M_alpha_1}),
	\begin{equation*}
	\frac{\mathbb{E}_\theta [f(\textbf{X})]}
	{\mathbb{E}_\theta [h(\textbf{X}) ]} = 
	\frac{ \bar{f}}
	{\overline{h}}.
	\end{equation*}
	In a similar fashion, the result for regular $\mathscr{E}^{(\alpha)}$-family can be shown.
\end{proof}

Theorem \ref{1thm:estimating_equations_for_general_families} fails if the support of the underlying family depends on the parameters. We show this by an example in Section \ref{1sec:Jones_et_al_on_student_t_alpha_bigger_1}.

Basu et al. estimating equation (\ref{1eqn:score_equation_B_alpha}) differs from the Jones et al.
estimating equation (\ref{1eqn:score_equation_I_alpha}) in which the weights
are normalized. Much research has been done to compare these two
methods (for example, see \cite{JonesHHB01J}). We saw in Section
\ref{1sec:the_general_form_of_the_alpha_families} that
a regular $\mathbb{B}^{(\alpha)}$-family can be viewed as a
regular $\mathbb{M}^{(\alpha)}$-family under some conditions. In the following, we show that the two estimations coincide on a regular
$\mathbb{B}^{(\alpha)}$-family with $h$ being a non-zero constant
(or on a regular
$\mathbb{M}^{(\alpha)}$-family with $h$ being a non-zero constant).
\vspace{0.2cm}

\begin{theorem}
	\label{1thm:same_estimation_under_basu_e_al_and_Jones_et_al}
	For a regular $\mathbb{B}^{(\alpha)}$-family with $h$ being
	identically a non-zero constant,
	Basu et al. estimating equation (\ref{1eqn:estimating_equation_for_general_exponential_family}) and Jones et al. estimating
	equation (\ref{1eqn:estimating_equation_for_general_M_alpha_family}) are the same.
\end{theorem}
\vspace{0.2cm}

\begin{proof}
	Consider the $\mathbb{B}^{(\alpha)}$-family as in 
	(\ref{1eqn:B_alpha_q_1}). If it is
	regular, from Corollary \ref{1cor:similarity_of_estimating_equations}, the Basu {et al.} estimating equation 
	is given by
	\begin{equation}
	\label{1eqn:Basu_et_al_for_B_alpah_q_equals_1}
	\mathbb{E}_\theta[f(\textbf{X})] = \bar{f}.
	\end{equation}
	We now show that the Jones {et al.} estimating equation (\ref{1eqn:estimating_equation_for_general_M_alpha_family}) for this family is also
	the same.
	Recall that (\ref{1eqn:B_alpha_q_1}) can be written as an
	$\mathbb{M}^{(\alpha)}$-family as in (\ref{1eqn:B_alpha_to_M_alpha_q_1}).
	The proof is divided into
	two parts.
	\begin{itemize}
		\item[(i)] Suppose that $F(\theta)$ is linearly independent with
		$1,w_i(\theta)$'s. Then
		(\ref{1eqn:B_alpha_to_M_alpha_q_1})
		forms a regular $\mathbb{M}^{(\alpha)}$-family by Proposition
		\ref{1pro:regular_B_alpha_and_regular_M_alpha}. Therefore using
		Corollary \ref{1cor:similarity_of_estimating_equations}, we see that
		the Jones {et al.} estimating equation (\ref{1eqn:estimating_equation_for_general_M_alpha_family}) for
		(\ref{1eqn:B_alpha_to_M_alpha_q_1}) is same as
		(\ref{1eqn:Basu_et_al_for_B_alpah_q_equals_1}),
		since $h$ is identically a constant.
		\item[(ii)] Next let us suppose that $F(\theta)$ is 
		linearly dependent with
		$1,w_i(\theta)$'s. Then there exists scalars
		$c_0, c_1,\ldots,c_k$ (not all zero) such that
		\begin{equation}
		\label{1eqn:value_of_F(theta)}
		F(\theta) = c_0 + c_1 w_1(\theta) +\cdots + c_k w_k(\theta).
		\end{equation}
		Then
		\begin{equation}
		\label{1eqn:derivative_of_F(theta)}
		\partial_r[F(\theta)] = c_1 \partial _r [w_1(\theta)] +\cdots+
		c_k \partial_r [w_k(\theta)].
		\end{equation}
		Using
		Theorem \ref{1thm:estimating_equations_for_general_families}(b),
		the Jones {et al.} estimating equation (\ref{1eqn:estimating_equation_for_general_M_alpha_family})
		for (\ref{1eqn:B_alpha_to_M_alpha_q_1}) is given, for $r\in\{1,\ldots,k\}$, by
		\begin{equation*}
		\frac{ \partial_r [w(\theta)/S(\theta)]^\top \mathbb{E}_\theta [f(\textbf{X})]}
		{\mathbb{E}_\theta [h +  [ w(\theta)/S(\theta)]^\top f(\textbf{X})]} = 
		\frac{ \partial_r [w(\theta)/S(\theta)]^\top \bar{f}}
		{h +  [ w(\theta)/S(\theta)]^\top\bar{ f}}.
		\end{equation*}
		Substituting the value of $S(\theta)$, an easy calculation yields, for $r\in\{1,\ldots,k\}$,
		\begin{align}
		\label{1eqn:Jones_et_al_estimating_equation_for_B_alpha_q_1}
		\frac{ \partial_r[F(\theta)] + \partial_r [w(\theta)]^\top \mathbb{E}_\theta [f(\textbf{X})]}
		{\mathbb{E}_\theta [h +  F(\theta) + w(\theta)^\top f(\textbf{X})]} = 
		\frac{ \partial_r[F(\theta)] + \partial_r [w(\theta)]^\top \bar{f}}
		{h +  F(\theta) + w(\theta)^\top\bar{ f}}.
		\end{align}
		Using (\ref{1eqn:value_of_F(theta)}) and (\ref{1eqn:derivative_of_F(theta)}) 
		in 
		(\ref{1eqn:Jones_et_al_estimating_equation_for_B_alpha_q_1}),
		\begin{align}
		\label{1eqn:Jones_et_al_estimating_equation_for_B_alpha_q_1_(i)}
		\partial_r[w(\theta)]^\top\Big[
		\frac{ \mathbb{E}_\theta [f(\textbf{X}) + c]}
		{h +  c_0 + w(\theta)^\top 
			\mathbb{E}_\theta [f(\textbf{X}) + c]} - 
		\frac{ \bar{f} + c}
		{h +  c_0 + w(\theta)^\top[\bar{ f} + c]}\Big] = 0,
		\end{align}
		where $c = [c_1,\ldots, c_k]^\top$. Since $1,w_1,\ldots,
		w_k$ are linearly independent, 
		$[\partial_i(w_j(\theta))]_{k\times k}$ is non-singular. Using this, (\ref{1eqn:Jones_et_al_estimating_equation_for_B_alpha_q_1_(i)})
		becomes
		\begin{equation*}
		\label{1eqn:Jones_et_al_estimating_equation_for_B_alpha_q_1_(ii)}
		\frac{ \mathbb{E}_\theta [f(\textbf{X}) + c]}
		{h +  c_0 + w(\theta)^\top 
			\mathbb{E}_\theta [f(\textbf{X}) + c]} =
		\frac{ \bar{f} + c}
		{h +  c_0 + w(\theta)^\top[\bar{ f} + c]}.
		\end{equation*}
		Proceeding as in Corollary \ref{1cor:similarity_of_estimating_equations}, 
		\begin{equation*}
		(h +  c_0 + w(\theta)^\top 
		\mathbb{E}_\theta [f(\textbf{X}) + c])(h +  c_0)
		= 
		(h +  c_0 + w(\theta)^\top[\bar{ f} + c])
		(h + c_0).	
		\end{equation*}
		That is,
		\begin{equation*}
		\frac{h +  c_0 + w(\theta)^\top 
			\mathbb{E}_\theta [f(\textbf{X}) + c]}
		{h +  c_0 + w(\theta)^\top[\bar{ f} + c]} = 1.
		\end{equation*}
Hence\hspace{3cm} $\mathbb{E}_\theta [f(\textbf{X}) + c] = \bar{f} + c,~\text{and thus}~ \mathbb{E}_\theta [f(\textbf{X})] = \bar{f}$.
	\end{itemize}
\end{proof}

\begin{corollary}
	For a regular $\mathbb{M}^{(\alpha)}$-family as in (\ref{1eqn:form_of_general_M_alpha_family}) with $h$ being identically a non-zero constant,
	the Basu et al. and the Jones et al. estimating
	equations are the same if  $Z^{1-\alpha}$ is linearly independent
	with $w_i$'s.
\end{corollary}
\section{Applications: Generalized estimation under Student and Cauchy distributions}
\label{1sec:generalized_estimation_on_student_t_and_cauchy}

In this section we find
Jones et al. estimators \cite{JonesHHB01J} for the parameters of Student distribution for $\nu\in(0,\infty)$ and generalized Hellinger estimators \cite{BasuSP11B} of Cauchy distribution for $\beta\in (1,(d+2)/2)$. For the estimation of Cauchy distributions we use the kernel density estimate for the empirical measure. We also find a robust estimator of the mean parameter of Student distribution for the case when $\nu\notin (0,\infty)$.

\subsection{Basu et al. \cite{BasuHHJ98J} and Jones et al. \cite{JonesHHB01J} estimation under Student distributions}
\label{1sec:Basu_et_al_and_Jones_et_al_estimation_on_student_t}

In Theorem \ref{1thm:student_dist_regular_B_alpha} we saw that for $\alpha\in\big((d-2)/d,1\big)$ (that is, for $\nu\in(0,\infty)$)
Student distributions form a $d(d+3)/2$-parameter regular $\mathbb{B}^{(\alpha)}$-family with $f^{(1)}({\bf{x}}) = {\bf{x}}$ and $f^{(2)}({\bf{x}})= \rm{vec}({\bf{x}}{\bf{x}}^\top)$. Hence to find the Basu {et al.} estimators of the parameters, its mean and variance should be finite. However, as we saw in Example
\ref{1expl:example_of_B_alpha}, (\ref{1eqn:student_t_as_B_alpha}) does not have finite mean and variance for $\alpha\in\big((d-2)/d,d/(d+2)\big]$.
Hence we restrict ourselves to Student distributions for $\alpha\in\big(d/(d+2),1\big)$. The mean and the covariance of a Student distribution for $\alpha\in\big(d/(d+2),1\big)$ are given by
$\boldsymbol{\mu}$ and $\boldsymbol{K}:=(k_{ij})_{d\times d} = [\nu/(\nu-2)]\cdot \boldsymbol{\Sigma}$ respectively.
Let $\textbf{X}_1,\ldots,\textbf{X}_n$ be an i.i.d. sample where each $\textbf{X}_i=[X_{1i},\ldots,X_{di}]^\top$ for $i\in\{1,\ldots,n\}$. Suppose also that
the true distribution $p$ is a Student distribution as in (\ref{1eqn:student_t_as_B_alpha}). Using Corollary \ref{1cor:similarity_of_estimating_equations}, the Basu {et al.} estimators of $\boldsymbol{\mu}$ and $\boldsymbol{K}$ are given, for $i,j\in\{1,\ldots,d\}$ and $i\leq j$, by
\begin{eqnarray}
\label{1eqn:Basu_et_al_estimators_for_student_t}
\widehat{\boldsymbol{\mu}} = \overline{\textbf{X}},\quad
\widehat{k_{ij}} = \frac{1}{n}\sum\limits_{r=1}^n X_{ir}X_{jr} - \widehat{\mu_i}\widehat{\mu_j},
\end{eqnarray}
where $\overline{\textbf{X}}=\frac{1}{n}\sum_{i=1}^n
\textbf{X}_i$.

Next consider the
Student distributions as in (\ref{1eqn:student_distribution_as_M_alpha_1})
with $\alpha\in\big(d/(d+2),1\big)$. We saw that it forms a
$d(d+3)/2$-parameter regular $\mathbb{M}^{(\alpha)}$-family
with $h\equiv 1$. Hence, from Theorem
\ref{1thm:same_estimation_under_basu_e_al_and_Jones_et_al},
the Jones {et al.} estimators for $\boldsymbol{\mu}$ and $\boldsymbol{K}$ are the
same as the Basu {et al.} estimators
as in (\ref{1eqn:Basu_et_al_estimators_for_student_t}). In \cite{EguchiKK11J} and \cite[Theorem 5]{GayenK18ISIT} this was solved directly from the estimating equation (\ref{1eqn:estimating_equation_for_general_M_alpha_family}). 
We summarize the above results in the following.

\begin{theorem}
	\label{1thm:estimators_for_student_t}
	For $\alpha\in(d/(d+2),1)$, the Basu {et al.} and the Jones {et al.} estimators of
	mean and covariance parameters of a $d$-dimensional Student distribution as in (\ref{1eqn:student_distribution}) are the same and are given by (\ref{1eqn:Basu_et_al_estimators_for_student_t}).
\end{theorem}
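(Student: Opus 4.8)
The plan is to read off both estimators directly from the moment-matching form of the estimating equations, exploiting the $\mathbb{B}^{(\alpha)}$- and $\mathbb{M}^{(\alpha)}$-structure of the Student family already established in Example \ref{1expl:example_of_B_alpha}. First I would recall that for $\alpha\in\big((d-2)/d,1\big)$ the Student distribution (\ref{1eqn:student_distribution}) is a regular $\mathbb{B}^{(\alpha)}$-family with $h\equiv 1$ and characterizing statistics $f^{(1)}(\textbf{x})=\textbf{x}$ and $f^{(2)}(\textbf{x})=\text{vec}(\textbf{x}\textbf{x}^T)$. To make the relevant moment conditions meaningful I restrict to $\alpha\in\big(d/(d+2),1\big)$, the range in which the first two moments exist; there the mean is $\mu$ and the covariance is $K=[\nu/(\nu-2)]\,\Sigma$.

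Next I would invoke Corollary \ref{1cor:similarity_of_estimating_equations}(a), which collapses the Basu {\em et. al.} estimating equation on this regular family to the moment-matching identity $\mathbb{E}_\theta[f(\textbf{X})]=\bar f$ of (\ref{1eqn:estimating_equation_for_regular_B_alpha}). Splitting this into the two blocks corresponding to $f^{(1)}$ and $f^{(2)}$, the first block $\mathbb{E}_\theta[\textbf{X}]=\overline{\textbf{X}}$ immediately yields $\widehat{\mu}=\overline{\textbf{X}}$. The second block $\mathbb{E}_\theta[\text{vec}(\textbf{X}\textbf{X}^T)]=\frac1n\sum_{l}\text{vec}(\textbf{X}_l\textbf{X}_l^T)$ reads, entrywise, $\mathbb{E}_\theta[X_iX_j]=\frac1n\sum_{l=1}^nX_{il}X_{jl}$. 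Using the decomposition $\mathbb{E}_\theta[X_iX_j]=k_{ij}+\mu_i\mu_j$ and substituting the already-found $\widehat{\mu_i}=\overline{X_i}$, I solve for the covariance entries to obtain $\widehat{k_{ij}}=\frac1n\sum_{l=1}^nX_{il}X_{jl}-\widehat{\mu_i}\widehat{\mu_j}$, which is precisely (\ref{1eqn:Basu_et_al_estimators_for_student_t}).

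To finish, I would transfer this conclusion to the Jones {\em et. al.} estimator. Viewing the same Student family as the regular $\mathbb{M}^{(\alpha)}$-family of (\ref{1eqn:student_distribution_as_M_alpha_1}), which again has $h\equiv 1$, Theorem \ref{1thm:same_estimation_under_basu_e_al_and_Jones_et_al} applies because $h$ is identically a non-zero constant; it guarantees that the Jones {\em et. al.} and Basu {\em et. al.} estimating equations coincide on this family. Hence the Jones {\em et. al.} estimators of $\mu$ and $K$ equal the Basu {\em et. al.} ones and are again given by (\ref{1eqn:Basu_et_al_estimators_for_student_t}).

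The main obstacle is not algebraic but a matter of hypotheses: one must ensure that the moment conditions driving Corollary \ref{1cor:similarity_of_estimating_equations}(a) are actually finite, which is exactly why the narrower range $\alpha\in\big(d/(d+2),1\big)$ (equivalently $\nu>2$) is imposed rather than the full regularity range $\alpha\in\big((d-2)/d,1\big)$. A secondary point worth care is that the estimating equations deliver the mean and the raw second-moment matrix, so the covariance $K$ emerges only after the centering step; since the moment equations are parametrization-free, one never needs to invert the map from $\Sigma$ (or $\theta$) to $K$ explicitly.
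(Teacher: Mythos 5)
Your proposal is correct and follows essentially the same route as the paper: restrict to $\alpha\in\big(d/(d+2),1\big)$ so the first two moments exist, apply Corollary \ref{1cor:similarity_of_estimating_equations}(a) to the regular $\mathbb{B}^{(\alpha)}$-structure of Example \ref{1expl:example_of_B_alpha} to get the moment-matching equations and hence (\ref{1eqn:Basu_et_al_estimators_for_student_t}), and then invoke Theorem \ref{1thm:same_estimation_under_basu_e_al_and_Jones_et_al} with $h\equiv 1$ to conclude the Jones \emph{et.~al.} estimator coincides. No gaps; the only addition you make is the explicit entrywise decomposition $\mathbb{E}_\theta[X_iX_j]=k_{ij}+\mu_i\mu_j$, which the paper uses implicitly.
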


\begin{remark}
	\label{1rem:equivalence_of_student_t_and_normal_distribution}
\emph{	It can be shown that, as $\nu\to +\infty$, Student distributions coincide
	with a normal distribution with mean $\boldsymbol{\mu}$ and covariance matrix $\boldsymbol{K}$ \cite{JohnsonV07J}. Similarly, as $\alpha\to 1^-$, Basu {et al.} estimating equation
	or Jones {et al.} estimating equation becomes ML estimating equation (\ref{1eqn:score_equation_mle_in_terms_of_sample}).
	Thus there is a continuity of the generalized estimators of
	mean and covariance parameters as $\nu\to +\infty$. This suggests that when the samples are from a Student distribution with sufficiently large $\nu$, MLE of its parameters can be approximated by a generalized estimator of the respective parameters. (Note that the MLE of Student distributions do not have closed-from solution and numerical methods must be resorted to solve it \cite{Barnett66J1, DempsterLR77J}.) Simulations suggest that, for $\nu\geq 40$, generalized estimators (\ref{1eqn:Basu_et_al_estimators_for_student_t}) are close to MLE even for small sample size.}
\end{remark}
\vspace{0.2cm}

For $\alpha\notin ((d-2)/d,1)$, the support of Student distributions depend on the parameters. Thus Theorem \ref{1thm:estimating_equations_for_general_families} can not be used to find the estimators. However, in this case, one can find the estimators by maximizing the respective likelihood function as described in the following.

\subsection{Jones et al. estimation \cite{JonesHHB01J} under Student distributions for $\nu\notin (0,\infty)$}
\label{1sec:Jones_et_al_on_student_t_alpha_bigger_1}

For simplicity we deal only the one-dimensional case. Suppose that $X_1,\ldots,X_n$ is an i.i.d. sample where $X_1\leq \cdots \leq X_n$. Suppose also that
the true distribution $p$ is
a Student distribution with some known $\alpha>1$ (that is, $\nu<-1$) and variance, say 
$\sigma^2=1$:
\begin{equation}
\label{1eqn:student_t_for_alpha_greater_1}
p_\mu (x) = N_{\alpha} [1 +b_\alpha(x -\mu)^2]^{\frac{1}{\alpha -1}}_+ ,
\end{equation}
where
$N_\alpha$ is the normalizing factor. The support of $p_\mu$ is given by $\mathbb{S} = \lbrace x : \mu - c_\alpha \leq x \leq \mu +c_\alpha\rbrace$,
where $c_\alpha:=\sqrt{-1\big/ b_\alpha}$. 
(Recall that $b_\alpha<0$ for $\alpha>1$).
Observe that (\ref{1eqn:student_t_for_alpha_greater_1}) defines
an $\mathbb{M}^{(\alpha)}$-family
whose support depends on the unknown parameter.
We now show that the Jones {et al.} estimator of $\mu$
could be different from $\overline{X}$.
Since the support of $p_\mu$
depends on $\mu$, we cannot apply Theorem \ref{1thm:estimating_equations_for_general_families}. Hence we resort to the
maximization of the associated likelihood function:
\begin{equation*}
L_3^{(\alpha)}(\theta) = \dfrac{\alpha}{\alpha-1}\ln\Big[\dfrac{1}{n}\sum\limits_{i=1}^n p_\theta({X}_i)^{\alpha-1}\Big] - \ln\Big[\int p_\theta({x})^\alpha dx
\Big] .
\end{equation*}
The likelihood function, for
(\ref{1eqn:student_t_for_alpha_greater_1}), becomes
\begin{align}
\label{1eqn:likelihood_function_of_student_t_for_alpha_2}
\lefteqn{L_3^{(\alpha)}(\mu\mid X_1,\ldots,X_n)}\nonumber\\
&=
\frac{\alpha}{\alpha-1}\ln \Big[ \tfrac{1}{n} \sum\limits_{i=1}^n 
N_\alpha ^{\alpha -1}~[1 +b_\alpha(X_i -\mu)^2]~ {\bf 1}(\mu -c_\alpha
\leq X_i \leq
\mu +c_\alpha)\Big] -\ln\big( \mathbb{E}_\mu \big[N_{\alpha}^{\alpha-1}\big\lbrace 1 +b_\alpha{(X -\mu)^2}\big\rbrace\big]\big)\nonumber\\
&=  \frac{\alpha}{\alpha-1}\ln \Big[ \frac{N_\alpha^{\alpha -1}}{n} \sum\limits_{i=1}^n [1 +b_\alpha(X_i -\mu)^2]~ {\bf 1}(X_i -c_\alpha\leq \mu \leq
X_i +c_\alpha)\Big] - \ln [ N_\alpha^{\alpha-1}(1+b_\alpha)]\nonumber\\
&= \frac{\alpha}{\alpha-1}\ln \Big[ \frac{N_\alpha^{\alpha-1}}{n} \sum\limits_{i=1}^n [1 +b_\alpha(X_i -\mu)^2]~ {\bf 1}(\mu\in I_i)\Big] - \ln [ N_\alpha^{\alpha-1}(1+b_\alpha)],\nonumber
\end{align}
where ${\bf 1}(\cdot)$ denotes the indicator function
and $I_i=[X_i -c_\alpha, X_i +c_\alpha]$, $i\in\{1,\ldots,n\}$. Observe that the maximizer of 
$L_3^{(\alpha)}(\mu)$ is same as the maximizer of
\begin{equation}
\label{1eqn:alternative_likelihood_function_of_student_t_for_alpha_2}
\ell^{(\alpha)}(\mu):=
\sum\limits_{i=1}^n [1 +b_\alpha(X_i -\mu)^2]~ {\bf 1}(\mu\in I_i).
\end{equation}
(Note that, Basu et al. likelihood function (\ref{1eqn:likelihood_function_for_B_alpha}) for the model in (\ref{1eqn:student_t_for_alpha_greater_1}) also reduces to (\ref{1eqn:alternative_likelihood_function_of_student_t_for_alpha_2}); hence both the estimators are the same in this case.)
It
is clear from (\ref{1eqn:alternative_likelihood_function_of_student_t_for_alpha_2})
that $\ell^{(\alpha)}(\mu)$ is positive if and only if $\mu$ lies at least in one $I_i$ for $i\in\{1,\ldots,n\}$.
Thus, to find the maximizer $\widehat{\mu}$ of $\ell^{(\alpha)}(\mu)$,
we only need to consider the cases when $\mu$ lies in one of the
$I_i$'s.

Consider $I_1$. If $I_1$ is disjoint from all other
$I_j$ for $j\neq 1$, then $\ell^{(\alpha)}(\mu)$ equals to 
$[1 +b_\alpha(X_1 -\mu)^2]$
for $\mu\in I_1$. Similarly, if $I_1\cap I_2\neq\emptyset$ but all
other $I_j$'s for $j\notin \{1,2\}$, are disjoint from $I_1$, then the value
of $\ell^{(\alpha)}(\mu)$ in $I_1$ is given by:
\begin{eqnarray*}
	{\ell^{(\alpha)}(\mu)} = \left\{
	\begin{array}{ll}
		{[1 +b_\alpha(X_1 -\mu)^2],} &\hbox{~} 
		\mu\in I_1\setminus I_2,\\
		{[2 +b_\alpha(X_1 -\mu)^2 +b_\alpha(X_2 -\mu)^2],} &\hbox{~} 
		\mu\in I_1\cap I_2.
	\end{array}
	\right.
\end{eqnarray*}
In general, if $I_2,I_3,\ldots,I_k$ for some $k\in\{2,\dots,n\}$ satisfy
$\cap_{i=1}^{k}I_i\neq\emptyset$                                                     
and all other $I_j$'s for $j\notin \{1,\dots,k\}$ are disjoint from
$I_1$, then 
$I_1$ can be divided into $k$ disjoint sub-intervals and 
in each of these sub-intervals
the value of $\ell^{(\alpha)}(\mu)$  
is given, for $\mu \in \big(\cap_{i=1}^j I_i\big)\setminus \big(\cup_{i=j+1}^k I_i\big),~j\in\{1,\ldots,k\}$, where $\cup_{k+1}^k I_i:=\emptyset$,  by
\begin{equation*}
\ell^{(\alpha)}(\mu) =
\sum\limits_{i=1}^j [1 +b_\alpha(X_i -\mu)^2].
\end{equation*}
Let us define ${\bf{1}}(\mu\in\emptyset) =0$. Then for $\mu\in I_1$, we can write
\begin{eqnarray*}
\label{1eqn:likelihood_in_I_1_student}
\lefteqn{\ell^{(\alpha)}(\mu)}
&&\hspace*{0.5cm} =
[1 +b_\alpha(X_1 -\mu)^2]~ {\bf{1}}\big(\mu\in I_1\setminus \cup_{r=2}^n I_r\big)\nonumber +
\Big\{\sum\limits_{i=1}^2 [1 +b_\alpha(X_i -\mu)^2]\Big\}~{\bf{1}}\big(\mu \in \big(\cap_{r=1}^2 I_r\big)\setminus \big(\cup_{r=3}^n I_r\big)\big)\nonumber\\
&&\hspace*{1cm} +\cdots+
\Big\{\sum\limits_{i=1}^{k} [1 +b_\alpha(X_i -\mu)^2]\Big\}~{\bf{1}}\big(\mu \in \big(\cap_{r=1}^k I_r\big)\setminus \big(\cup_{r=k+1}^n I_r
\big)\big)\nonumber\\
&&\hspace*{1cm} +\cdots +
\Big\{\sum\limits_{i=1}^n [1 +b_\alpha(X_i -\mu)^2]\Big\}~{\bf{1}}\big(\mu \in \big(\cap_{r=1}^n I_r\big)\big)\nonumber\\
&&\hspace*{0.3cm} =\sum\limits_{j=1}^n\Big\{\Big[\sum\limits_{i=1}^j 
[1 +b_\alpha(X_i -\mu)^2]\Big]
~{\bf{1}}\big(\mu \in \big(\cap_{r=1}^j I_r\big)\setminus \big(\cup_{r=j+1}^n I_r\big)\big)\Big\}.
\end{eqnarray*}
Let $I_2':=I_2\setminus I_1$. Then proceeding as above,
for $\mu\in I_2'$, we have
\begin{eqnarray*}
\label{1eqn:likelihood_in_I_2_student}
\lefteqn{\ell^{(\alpha)}(\mu)} &&\hspace*{0.5cm} =
[1 +b_\alpha(X_2 -\mu)^2]~ {\bf{1}}\big(\mu\in I_2'\setminus \cup_{r=3}^n I_r\big) +
\Big\{\sum\limits_{i=2}^3 [1 +b_\alpha(X_i -\mu)^2]\Big\}~{\bf{1}}\big(\mu \in \big(
I_2'\cap I_3\big)\setminus \big(\cup_{r=4}^n I_r\big)\big) \nonumber\\
&&\hspace*{1cm}
+\cdots+\Big\{\sum\limits_{i=2}^{k} [1 +b_\alpha(X_i -\mu)^2]\Big\}~{\bf{1}}\big(\mu \in \big(I_2'\cap (\cap_{r=3}^k I_r)\big)\setminus \big(\cup_{r=k+1}^n I_r\big)\big)\nonumber\\
&&\hspace*{1cm} +\cdots +
\Big\{\sum\limits_{i=2}^n [1 +b_\alpha(X_i -\mu)^2]\Big\}~{\bf{1}}\big(\mu \in \big(I_2'\cap(\cap_{r=3}^n I_r)\big)\big)\nonumber\\
&&\hspace*{0.3cm} = \sum\limits_{j=2}^n\Big\{\Big[\sum\limits_{i=2}^j 
[1 +b_\alpha(X_i -\mu)^2]\Big]
~{\bf{1}}\big(\mu \in \big(I_2'\cap (\cap_{r=2}^j I_r)\big)\setminus \big(\cup_{r=j+1}^n I_r\big)\big)\Big\}.
\end{eqnarray*}
In general, let $I_k':=I_k\setminus \big(\cup_{i=1}^{k-1} I_i\big)$, $k\in\{1,\ldots,n\}$. Then for $\mu\in I_k'$, 
\begin{eqnarray}
\label{1eqn:likelihood_for_any_k_sample_student}
\ell^{(\alpha)}(\mu)
=\sum\limits_{j=k}^n\Big\{\Big[\sum\limits_{i=k}^j 
[1 +b_\alpha(X_i -\mu)^2]\Big]
~{\bf{1}}\big(\mu \in \big(I_k'\cap (\cap_{r=k}^j I_r)\big)\setminus \big(\cup_{r=j+1}^n I_r\big)\big)\Big\}.
\end{eqnarray}

\begin{figure}
	\centering
	\includegraphics[width=14cm, height=1.5cm]{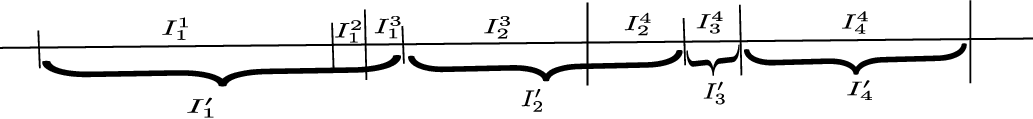}
	\caption{An example of the intervals $I_1'$, $I_2'$, $I_3'$ and $I_4'$ and its subintervals with four sample points $X_1,X_2,X_3$ and $X_4$, where $I_1'=[X_1-c_\alpha,X_1+c_\alpha]$, $I_2'= [X_1+c_\alpha, X_2+c_\alpha]$, $I_3'= [X_2+c_\alpha, X_3+c_\alpha]$ and $I_4'= [X_3+c_\alpha, X_4+c_\alpha]$.}
	\label{1fig:subintervals}
\end{figure}

Hence for $\mu\in I_k'$, $k \in \{1,\dots,n\}$,
we can divide $I_k'$ into at most $(n-k+1)$ sub-intervals
$I_k^j:=\big[\big(I_k'\cap (\cap_{i=k}^j I_i)\big)\setminus \big(\cup_{i=j+1}^n I_i\big)\big]$, $j\in\{k,\dots,n\}$, such
that the indicator functions in 
(\ref{1eqn:likelihood_for_any_k_sample_student}) will be positive for 
$\mu$ in either of these sub-intervals.
For example, in Figure \ref{1fig:subintervals} we considered a case
where $I_1'$ can be divided into three disjoint sub-intervals,
namely $I_1^1$, $I_1^2$ and $I_1^3$, and $I_2'$ can be divided
into two disjoint sub-intervals $I_2^3$ and $I_2^4$, and so on.
The maximizer of $\ell^{(\alpha)}(\mu)$ for $\mu$ in each of these
sub-intervals $I_k^j$ can be found in the following way.

Let $k$ and $j$ be such that $I_k^j\neq \emptyset$. 
Then the following possible cases appear:
\begin{itemize}
	\item[(i)] $j=k$ and $I_k^{j+1}\neq \emptyset$:
	\begin{eqnarray*}
		{I_k^j} = \left\{
		\begin{array}{ll}
			{[X_{k-1}+c_\alpha,X_{k+1}-c_\alpha],} &\hbox{~if~} 
			I_k'\neq I_k,\\
			{[X_k-c_\alpha,X_{k+1}-c_\alpha],} &\hbox{~if~} 
			I_k'=I_k,
		\end{array}
		\right.
	\end{eqnarray*}
	\item[(ii)] $j=k$ and $I_k^{j+1}= \emptyset$:
	\begin{eqnarray*}
		{I_k^j} = \left\{
		\begin{array}{ll}
			{[X_{k-1}+c_\alpha,X_{k}+c_\alpha],} &\hbox{~if~} I_k'\neq I_k,\\
			{[X_k-c_\alpha,X_{k}+c_\alpha],} &\hbox{~if~} 
			I_k'=I_k,
		\end{array}
		\right.
	\end{eqnarray*}
	\item[(iii)] $j>k$ and $I_k^{j+1}\neq \emptyset$:
	\begin{eqnarray*}
		{I_k^j} = \left\{
		\begin{array}{ll}
			{[\max \{X_{j}-c_\alpha, X_{k-1}+c_\alpha\},X_{j+1}-c_\alpha],} &\hbox{~if~} 
			I_k'\neq I_k,\\
			{[X_{j}-c_\alpha,X_{j+1}-c_\alpha],} &\hbox{~if~} I_k'=I_k,
		\end{array}
		\right.
	\end{eqnarray*}
	\item[(iv)]$j>k$ and $I_k^{j+1}= \emptyset$:
	\begin{eqnarray*}
		{I_k^j} = \left\{
		\begin{array}{ll}
			{[\max \{X_{j}-c_\alpha, X_{k-1}+c_\alpha\}, X_{k}+c_\alpha],} &\hbox{~if~} 
			I_k'\neq I_k,\\
			{[X_{j}-c_\alpha,X_{k}+c_\alpha],} &\hbox{~if~} 
			I_k'=I_k.
		\end{array}
		\right.
	\end{eqnarray*}
\end{itemize}
Also from (\ref{1eqn:likelihood_for_any_k_sample_student}), for $\mu\in I_k^j$,
$\ell^{(\alpha)}(\mu)=\sum\limits_{i=k}^j [1+b_\alpha(X_i-\mu)^2]$.
Since
\begin{eqnarray*}
\label{1eqn:deriavtive_of_l_2}
\frac{\partial \ell^{(\alpha)}(\mu)}{\partial \mu} =
\sum\limits_{i=k}^{j}\frac{\partial}{\partial \mu} [1+b_\alpha(X_i-\mu)^2]= 
-2b_\alpha\sum\limits_{i=k}^{j} (X_{i}-\mu),
\end{eqnarray*}
$\ell^{(\alpha)}(\mu)$ is monotone increasing for $\mu \leq \frac{1}{j-k+1}
\sum_{i=k}^{j}X_{i}$, and monotone decreasing otherwise. 
Thus the local maximizer
of $\ell^{(\alpha)}(\mu)$ in any non-empty $I_k^j$ for $j\in\{k,k+1,\dots,n\}$ 
is given by 
\begin{align}
\label{1eqn:formula_for_estimator_student_alpha_2}
\text{the median of}~\Big\lbrace I_L, I_R, \tfrac{1}{j-k+1}\sum_{i=k}^{j}X_{i}\Big\rbrace,
\end{align}
where $I_L$ and $I_R$ are respectively the left and right ends of
the sub-interval $I_k^j$ as in (i)-(iv).
\begin{figure}
	\centering
	\hspace*{-4cm}
	\begin{minipage}{0.24\textwidth}
		\includegraphics[width=7cm, height=5cm]{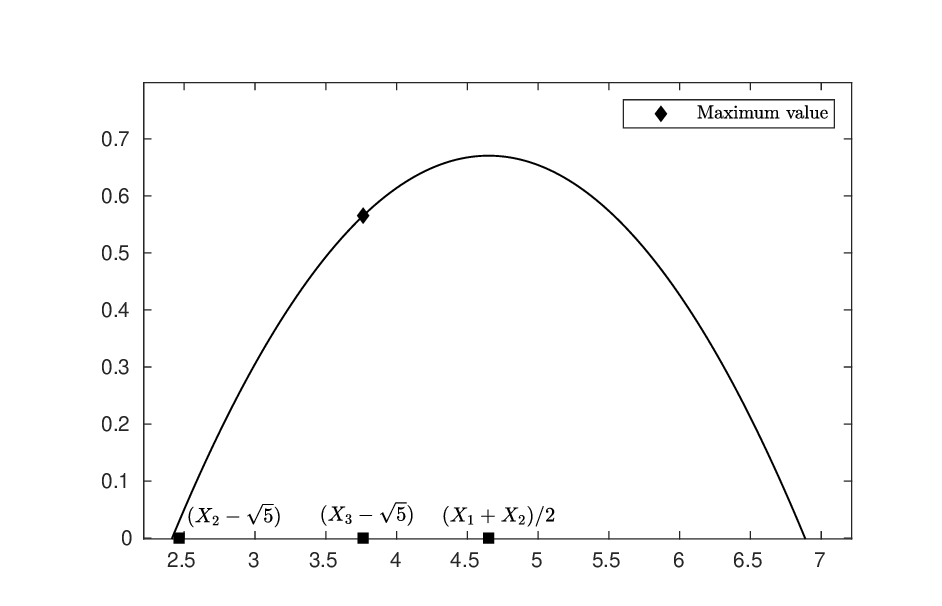}
	\end{minipage}
	\hspace*{3.3cm}
	\begin {minipage}{0.24\textwidth}
	\includegraphics[width=6.8cm, height=4.6cm]{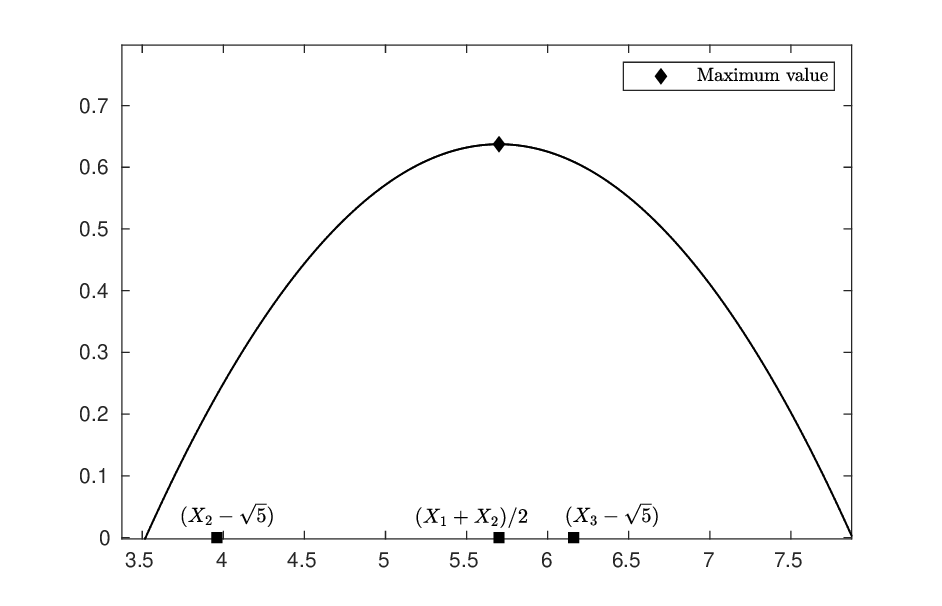}
\end{minipage}
\caption{An example of two different cases of maximizers in $[X_2-\sqrt{5},X_3-\sqrt{5}]$. In the first picture, the maximizer is at the right end of the interval, whereas, in the second, it is the average of $X_1$ and $X_2$.}
\label{1fig:different_maximizers}
\end{figure}
Figure \ref{1fig:different_maximizers} shows two different cases
of maximizer of $\ell^{(\alpha)}(\mu)$ for $\alpha=2$ ($c_\alpha=\sqrt{5} $ here) in
the sub-interval $[X_2-\sqrt{5},X_3-\sqrt{5}]$.

Observe that, in this process
we divide $\cup_{i=1}^n I_i$ into a {\em finite} number of
non-empty disjoint sub-intervals 
such that $\ell^{(\alpha)}(\mu)$ is positive if and only if
$\mu$ lies in one of these sub-intervals. In each of these sub-intervals,
$\ell^{(\alpha)}(\mu)$ has a unique maximizer.
Thus we have a finite number of local maximizers 
of $\ell^{(\alpha)}(\mu)$ in $\cup_{i=1}^n I_i$, and hence
the global maximizer is one among these local maximizers. This implies that
the global maximizer can be different from the sample mean. \vspace{0.2cm}

\begin{remark}
\emph{	Observe that $c_\alpha\to \infty$  when $\alpha\to 1$. Thus
	for $\alpha\to 1$, the length
	of the intervals $I_i=[X_i-c_\alpha,X_i+c_\alpha]$ for $i\in \{1,\dots,n\}$ increases, 
	and hence all the indicator
	functions in (\ref{1eqn:alternative_likelihood_function_of_student_t_for_alpha_2}) become positive for any $\mu\in \mathbb{R}$. This implies that the
	maximizer of $\ell^{(\alpha)}(\mu)$ is the usual sample mean
	$\overline{X}$. 
	This complies with the case $\alpha=1$ as
	the MLE of the mean parameter of a normal
	distribution is $\overline{X}$. Recall that
	the likelihood function $L_3^{(\alpha)}$ coincides with the
	usual $\log$ likelihood function and the Student distribution
	coincides with the normal distribution as $\alpha\to 1$.}
\end{remark}

To demonstrate the algorithm, we generated the following random sample from
the mixture $0.8p+0.2\mathcal{N}(10,1)$, where
$p$ is the Student distribution with $\alpha=2$, $\mu=0$ and $\sigma=1$:
\begin{equation}
\label{1eqn:sample}
   \begin{bmatrix} 
   -1.7287& -1.1761& -1.0597& -0.3236& -0.2340\\
   0.4706& 0.4712& 0.5435& 0.6309& 0.7533 \\
   0.8020& 0.9237& 1.1394& 1.4373& 1.5351  \\
   1.6941& 8.6501& 10.7254& 12.7694& 13.0349.\\
   \end{bmatrix} 
\end{equation}
Consider $\mu\in I'_1=[-3.9648, 0.5074]$. Then 
$I_1^j\neq\emptyset$ for all $j\in\{1,\dots,16\}$. Using the formula 
(\ref{1eqn:formula_for_estimator_student_alpha_2}), we have the following 
local maximizers of $\ell^{(\alpha)}(\mu)$ in each of these sub-intervals $I_1^j$, $j\in\{1,\dots,16\}$,  of $I_1'$:
\begin{equation*}
   \begin{bmatrix} 
   -3.4122& -3.2958& -2.5597& -2.4701& -1.7655& -1.7649& -1.6926& -1.6052\\
    -1.4828& -1.4341& -1.3124& -1.0967& -0.7988& -0.7010& -0.5420& 0.3674. \\
   \end{bmatrix} 
\end{equation*}
Next consider $\mu\in I_2'=[0.5074, 1.06]$. 
Then the indicator functions in (\ref{1eqn:likelihood_in_I_2_student})
are positive only when $j = 16$, that is, $I_2^j\neq \emptyset$ only for
$j=16$. Using
(\ref{1eqn:formula_for_estimator_student_alpha_2}), we get the maximizer of $\ell^{(2)}(\mu)$ in $I_2^{16}(=I_2')$ is $0.5074$. Similarly, we have only one maximizer in each $I_i'$ for $i\in\{3,\dots,16\}$
and they respectively are:
\begin{equation*}
   \begin{bmatrix} 
   1.0600& 1.1764& 1.9125& 2.0021& 2.7067& 2.7073& 2.7796\\
    2.8670& 2.9894& 3.0389& 3.1598& 3.3755& 3.6734& 3.7712. \\
   \end{bmatrix} 
\end{equation*}
Next consider $\mu\in I_{17}'=[6.4140, 10.8862]$. Then 
$I_{17}^j\neq\emptyset$ for
$j\in\{17,\dots,20\}$. We then have four local maximizers of
$\ell^{(2)}(\mu)$ in four sub-intervals of $I_{17}'$, namely $I_{17}^j$ for
$j\in\{17,\dots,20\}$,
and they respectively are $8.4893$, $9.6878$,
$10.7150, 10.8862$. Similarly for $\mu\in I_i', i\in\{18, 19, 20\}$, 
$\ell^{(2)}(\mu)$ has only one local
maximizer in each $I_i'$ and they respectively are $12.1766, 12.9615, 15.0055$.

Comparing the values of $\ell^{(2)}(\mu)$ at each of the local maximizers, we get
$0.3674$ as the global maximizer of $\ell^{(2)}(\mu)$. Hence the Jones et al. (or Basu et al.) estimator for $\mu$ is
$\widehat{\mu}= 0.3674$, which is different from $\overline{X} = 1.2940$. Also MLE and MLE after deleting the outliers are, respectively, 2.173 and 0.1721. We repeated this exercise with $p$ being a Student distribution with mean zero and $\nu = -5$ ($\alpha = 1.5$). The results are shown in Table \ref{1tab:comp_esti}. We observe that Jones et al. (or Basu et al.) estimator is close to the true parameter (also to the ML-estimator without outliers) as $\alpha$ gets close to $1$ from right.

\begin{table}
    \centering
    \caption{Comparison of Jones et al. estimator of mean parameter with its ML-estimator for the mixture $0.8p+0.2\mathcal{N}(10,1)$, where $p$ is the Student distribution with $\alpha=2$, $\mu=0$ and $\sigma=1$.}
    \begin{tabular}{|c| c| c| c|}
    \hline
        Degrees of &  \multicolumn{3}{c|}{Estimators by different methods}\\
       \cline{2-4}
         freedom &  & MLE & Jones et al.\\
            of the model    &  MLE   & (without outliers) & estimator\\
        \hline
        &&&\\
        $\nu =-3$ & ~~2.1733 & 0.1721 & 0.3674\\
        ($\alpha = 2$)&&&\\ 
			&&&\\
			\hline
				&&&\\
		$\nu =-5$ & ~~2.2730 & 0.2770 & 0.3186\\
        ($\alpha = 1.5$)&&& \\ 
        	&&&\\
			\hline
    \end{tabular}
    \label{1tab:comp_esti}
\end{table}

\subsection{Generalized Hellinger estimation under Cauchy distributions}

Consider the Cauchy distributions as in (\ref{1eqn:cauchy_distribution_as_E_alpha}). Here we find the generalized Hellinger estimators for their location and scale parameters using a kernel density estimate for the sample empirical measure.
Let $\textbf{X}_i = [X_{1i},\dots,X_{di}]^\top$, $i\in\{1,\dots,n\}$ be an
i.i.d. sample. Let $\widetilde{p}_n$ be a suitable continuous density estimator of the empirical measure $p_n$. When $\beta \in\big(1, (d + 2)/d\big)$, we saw that Cauchy distributions form a regular $\mathscr{E}^{(\beta)}$-family. Thus in this case we use Corollary \ref{1cor:similarity_of_estimating_equations} to estimate its parameters.
But for $\beta\notin \big(1,(d+2)/d\big)$, the support of this distribution depends on the unknown parameters. In this case one can estimate
the parameters by maximizing the associated likelihood function as we did for Student distributions in Section \ref{1sec:Jones_et_al_on_student_t_alpha_bigger_1}.

Let $\beta \in\big(1, (d + 2)/d\big)$. The characterizing entities of Cauchy distributions as a regular $\mathscr{E}^{(\beta)}$ are respectively $h({\bf{x}})\equiv 1$, $f^{(1)}({\bf{x}}) = {\bf{x}}$ and $f^{(2)}({\bf{x}}) = \text{vec}({\bf{x}}
{\bf{x}}^T)$ (see Example \ref{1expl:cauchy_distribution}). Using (\ref{1eqn:estimating_equation_for_regular_E_alpha}), we therefore have the following estimating equations
\begin{align}
\label{1eqn:estimating_equations_for_cauchy_distribution_as_D_alpha}
\mathbb{E}_{\eta^{(\beta)}}[\textbf{X}] = \mathbb{E}_{\widetilde{p}_n^{(\beta)}}[{\bf{X}}]
~\text{and}~~ \mathbb{E}_{\eta^{(\beta)}}[\text{vec}(\textbf{X}\textbf{X}^T)]
=\mathbb{E}_{\widetilde{p}_n^{(\beta)}}[\text{vec}(\textbf{X}\textbf{X}^T)].
\end{align}

Let us find $\mathbb{E}_{\eta^{(\beta)}}[\textbf{X}]$ and $\mathbb{E}_{\eta^{(\beta)}}[\text{vec}(\textbf{X}\textbf{X}^T)]$.
In Example \ref{1expl:cauchy_distribution}, we saw that
$p_\theta^{(\alpha)} = q_\eta,~\text{where}~\alpha=1/\beta$.
Hence $q_\eta^{(\beta)} =p_\theta$. 
Thus
\begin{equation}
\mathbb{E}_{\eta^{(\beta)}} [\textbf{X}] = \mathbb{E}_\theta[\textbf{X}]
= \mu~\text{and}~
\mathbb{E}_{\eta^{(\beta)}}[X_iX_j] = \mathbb{E}_\theta[ X_iX_j]
= k_{ij} + \mu_i\mu_j,
\end{equation}
where ${\bf{X}}=[X_1,\dots,X_d]^T$ and $k_{ij}=[\nu/(\nu-2)]\cdot \sigma_{ij}$. Using this in (\ref{1eqn:estimating_equations_for_cauchy_distribution_as_D_alpha}),
we get 
\begin{eqnarray}
\label{1eqn:estimators_for_cauchy_distribution}
{\mu} = \mathbb{E}_{\widetilde{p}_n^{(\beta)}}[{\bf{X}}],~~
{k_{ij}} = \mathbb{E}_{\widetilde{p}_n^{(\beta)}}[X_iX_j] - \widehat{\mu_i}\widehat{\mu_j},
\end{eqnarray}
for $i,j\in\{1,\dots,d\}$, $i\leq j$. Thus generalized Hellinger estimators of the location and scale parameters are 
\begin{eqnarray}
\label{1eqn:estimators_for_cauchy_distribution1}
\widehat{\mu} = \mathbb{E}_{\widetilde{p}_n^{(\beta)}}[{\bf{X}}],~~
\widehat{\sigma_{ij}} = \Big(\mathbb{E}_{\widetilde{p}_n^{(\beta)}}[X_iX_j] - \widehat{\mu_i}\widehat{\mu_j}\Big)\Big/\Big(\nu/[\nu-2]\Big)
\end{eqnarray}
for $i,j\in\{1,\dots,d\}$ and $i\leq j$. We summarize these in the following theorem.
\vspace*{0.2cm}

\begin{theorem}
\label{1thm:estimator_for_cauchy_distribution}
Let $\beta\in\big(1,(d+2)/d\big)$ and $\widetilde{p}_n$ be a suitable continuous estimate of the sample empirical measure. Then the generalized Hellinger estimators of the location and scale parameters of a $d$-dimensional Cauchy distribution are given by (\ref{1eqn:estimators_for_cauchy_distribution1}).
\end{theorem}

\begin{remark}
	In Example \ref{1expl:student_t_E_alpha}, We saw that Student distributions form a regular $\mathscr{E}^{(\alpha')}$-family for $\alpha'\in\big(1,(d+2)/d\big)$. Thus one can do the $\alpha'$-generalized Hellinger estimation on Student distributions as well. 
\end{remark}
\vspace*{0.2cm}

Notice that the estimators in (\ref{1eqn:estimators_for_cauchy_distribution1}) involve a continuous estimate $\widetilde{p}_n$ of
$p_n$. In the following we present examples
where we use `kernel density estimation' to find such $\widetilde{p}_n$ and
use it to find the estimators. In the
literature the commonly used kernel to estimate the $d$-dimensional
empirical measure is of the following form (see \cite{TamuraB86J}):
\begin{equation}
\label{1eqn:general_form_of_kernel}
\widetilde{p}_n({\bf{x}}) = \frac{1}{nh_n^d}\sum\limits_{i=1}^n \xi\Big(
\frac{{\bf{x}}-\textbf{X}_i}{h_n}\Big),
\end{equation}
where $\xi$ is a symmetric distribution on $\mathbb{R}^d$ and 
$\lbrace h_n\rbrace$ is a sequence of real numbers with suitable properties,
called bandwidth.
These properties of the kernel $\xi$ and bandwidth $h_n$ influence the performance of the estimators greatly. There is no general theory in the literature
to choose the right continuous estimate for a given problem.
However authors like Beran
\cite{Beran77J}, Tamura and Boos \cite{TamuraB86J}, and Simpson \cite{Simpson89J} imposed conditions on $\xi$ and $h_n$ so that the estimators perform better
in their specific setting. We shall use the following two kernels, namely {\em uniform kernel} and {\em Epanechnikov kernel}, with bandwidth $h_n = n^{-1/2d}$ for $n\geq 1$ to find the estimators. Both the kernels and $h_n$ satisfy the following conditions which guarantee the $L_1$ convergence 
of $\widetilde{p}_n$ to the true density \cite[Lem. 3.1]{TamuraB86J} (see also \cite[Sec. 3.3]{BasuSP11B} and the references therein):

\begin{itemize}
	\item[(i)] $\xi$ is symmetric about $0$ and has compact support.
	\item[(ii)] $\lim\limits_{n\to\infty} h_n = 0$ and 
	$\lim\limits_{n\to\infty} [h_n + (nh_n^d)^{-1}] = 0$.
\end{itemize}
\vspace*{0.2cm}

We first use the $d$-dimensional uniform kernel which is defined as follows.
\begin{displaymath}
\xi_u({\bf{x}}) = \left\{
\begin{array}{ll}
{ \frac{1}{2^d}\quad \text{if}~ {\bf{x}}\in [-1,1]^d }  
\\0~ \quad\text{otherwise}
.
\end{array}
\right
.
\end{displaymath}

Let us denote
$[X_{1i}-n^{-1/2d},X_{1i}+n^{-1/2d}]\times \dots \times [X_{di}-n^{-1/2d},X_{di}+n^{-1/2d}]$ by 
$\big[\textbf{X}_i-n^{-1/2d},\textbf{X}_i+n^{-1/2d}\big]$ for
$i\in\{1,\dots,n\}$ and call them rectangles. Assume that all these
rectangles are disjoint (these are actually disjoint for sufficiently large 
$n$).
Then from (\ref{1eqn:general_form_of_kernel}) we have
\begin{equation*}
\widetilde{p}_n({\bf{x}}) = n^{-1/2}\sum\limits_{i=1}^n \xi_u\big(n^{1/2d}[{\bf{x}} - \textbf{X}_i]\big).
\end{equation*}
That is,
\begin{displaymath}
\widetilde{p}_n({\bf{x}}) = \left\{
\begin{array}{ll}
{ n^{-1/2}2^{-d}~~ \text{if}~ {\bf{x}}\in 
[{\bf{X}}_i-n^{-1/2d},{\bf{X}}_i+ n^{-1/2d}]~\text{for~} i\in\{1,\dots,n\},}  
\\
0 \hspace*{1.2cm}\text{otherwise}
.
\end{array}
\right.
\end{displaymath}
Thus $\widetilde{p}_n$ is the uniform distribution on 
$\bigcup_{i=1}^n\big[\textbf{X}_i-n^{-1/2d},\textbf{X}_i+n^{-1/2d}\big].$
This implies that the $\beta$-scaled distribution $\widetilde{p}_n^{(\beta)}$ is the
same as $\widetilde{p}_n$.
Therefore, we have
\begin{equation*}
\mathbb{E}_{\widetilde{p}_n^{(\beta)}}[\textbf{X}] =
\mathbb{E}_{\widetilde{p}_n}[\textbf{X}] = \frac{1}{n}\sum_{i=1}^n \textbf{X}_i,
\end{equation*}
\begin{align*}
\mathbb{E}_{\widetilde{p}_n^{(\beta)}}[{X_j}^2] =
\mathbb{E}_{\widetilde{p}_n}[{X}_j^2] &=\frac{1}{n^{1/2}2^d}\sum_{l=1}^n
\tfrac{(2n^{-1/2d})^{d-1}[(X_{jl}+n^{-1/2d})^3 -(X_{jl}-n^{-1/2d})^3]}{3}\\
&= \frac{1}{n^{1/2}2^d}\sum_{l=1}^n
\frac{(2n^{-1/2d})^{d}[(3X_{jl}^2+n^{-2/2d})]}{3}\\
& =\frac{1}{n}\sum_{l=1}^n X_{jl}^2 +\frac{1}{3n^{1/d}},~\text{for}~j\in\{1,\dots,d\},
\end{align*}
and
\begin{align*}
\mathbb{E}_{\widetilde{p}_n^{(\beta)}}[{X_i X_j}] =
\mathbb{E}_{\widetilde{p}_n}[X_i {X}_j] &=\frac{1}{n^{1/2}2^d}\sum_{l=1}^n
\frac{(2n^{-1/2d})^{d-2}[4n^{-1/2d}X_{il}][4n^{-1/2d}X_{jl}]}{4}\\
&=\frac{1}{n}\sum_{l=1}^n X_{il} X_{jl}~
\text{for}~ i,j\in\{1,\dots,d\}~ \text{and}~ i< j.
\end{align*}
Hence the estimators for the parameters are given by 
\begin{eqnarray}
\label{1eqn:estimator_of_cauchy_distribution_with_uniform_kernel}
\widehat{\mu} =  \overline{\textbf{X}},
~~\widehat{k_{ij}}=[\nu/(\nu-2)]\cdot \widehat{\sigma_{ij}}  =
\frac{1}{n}\sum\limits_{l=1}^n X_{il}X_{jl} -
\widehat{\mu_i}\widehat{\mu_j} + \epsilon_n,
\end{eqnarray}
for $i,j=1,\dots,d$ and $i\leq j$, where $\epsilon_n=\frac{1}{3n^{1/d}}$ if $i=j$ and equals to
zero otherwise.
\vspace*{0.4cm}

Next we find the estimators using the following $d$-dimensional Epanechnikov kernel 
\begin{displaymath}
\xi_e({\bf{x}}) = \left\{
\begin{array}{ll}
{ \frac{d+2}{2^d}(1-\|{\bf{x}}\|^2)\quad \text{if}~ {\bf{x}}\in [-1,1]^d }  
\\0~ \hspace*{2cm}\text{otherwise}
,
\end{array}
\right
.
\end{displaymath}
where $\|\cdot\|$ denotes the Euclidean norm.
We thus have
\begin{displaymath}
\widetilde{p}_n({\bf{x}}) = \left\{
\begin{array}{ll}
{  \frac{d+2}{2^d}(1-\|n^{1/2d}({\bf{x}} -{\bf{X}}_i)\|^2)~~ \text{if}~ {\bf{x}}\in 
	[{\bf{X}}_i-n^{-1/2d},{\bf{X}}_i+ n^{-1/2d}]~\text{for~} i\in\{1,\dots,n\},}  
\\
0 \hspace*{3.5cm}\text{otherwise}
.
\end{array}
\right.
\end{displaymath}
This yields the same estimators for $\mu$ and $k_{ij}$ as in (\ref{1eqn:estimator_of_cauchy_distribution_with_uniform_kernel}) except the correction term $\epsilon_n$ which differs only up-to a scale factor. For example when $\beta=2$, $\epsilon_n$ changes to $\frac{1}{7n}$ for $d=1$, to $\frac{17}{91n^{1/2}}$ for $d=2$, to $\frac{25}{63n^{1/3}}$ for $d=3$, and so on. 

Observe that, for $\beta\in\big[(d+4)/(d+2), (d+2)/d\big)$ (that is, $\nu<2$), Cauchy distributions do not have finite mean and variance. Hence the estimates (\ref{1eqn:estimator_of_cauchy_distribution_with_uniform_kernel}) oscillate as one increases the sample size $n$. In this case some other smoothing techniques or changing the bandwidth in Theorem \ref{1thm:estimator_for_cauchy_distribution} may produce a better estimator. A general theory for this is part of our future work. However for $\beta\in\big(1,(d+4)/(d+2)\big)$, these estimators are close to the true parameters. In Table \ref{1tab:compare_hellinger_est_cauchy}, we summarize the results of a simulation study where we find the estimators by taking average of 25 different sets of random samples of size $50$ drawn from a standard Cauchy distribution using both the kernels for different degrees of freedom. We observe that, for $\nu>2$, the estimators are bounded, and as $\nu$ increases, their performances get better.

\begin{table}
	\centering
	\caption{$\beta$-Hellinger estimators of the location and scale parameters obtained by kernel density estimation method using uniform and Epanechnikov kernels with bandwidth $h_n=1/\sqrt{n}$ and sample size $n=50$.}
	\begin{tabular}{|*{5}{c|}}  
		\hline
	\multicolumn{1}{|c|}{} & \multicolumn{2}{|c|}{} & \multicolumn{2}{|c|}{} 	
	\\
	\multicolumn{1}{|c|}{Parameters} & \multicolumn{2}{|c|}{Estimators} & \multicolumn{2}{|c|}{Estimators using} 
	\\
	\multicolumn{1}{|c|}{($\mu_T=0,~\sigma_T=1$)} & \multicolumn{2}{|c|}{using uniform} & \multicolumn{2}{|c|}{Epanechnikov}
	\\ 
	\multicolumn{1}{|c|}{} & \multicolumn{2}{|c|}{kernel} & \multicolumn{2}{|c|}{kernel}
	\\\hline	
	\multicolumn{1}{|c|}{} & \multicolumn{1}{|c|}{$\widehat{\mu}$} & 
	\multicolumn{1}{|c|}{$\widehat{\sigma}$} & \multicolumn{1}{|c|}{$\widehat{\mu}$} & \multicolumn{1}{|c|}{$\widehat{\sigma}$}
	\\\hline
	\multicolumn{1}{|c|}{$\nu=1$} & \multicolumn{1}{|c|}{} & 
	\multicolumn{1}{|c|}{} & \multicolumn{1}{|c|}{} & \multicolumn{1}{|c|}{}\\
	
	\multicolumn{1}{|c|}{($\beta=2$)}& \multicolumn{1}{|c|}{0.7689} & 
	\multicolumn{1}{|c|}{21.1981} & \multicolumn{1}{|c|}{0.7689} & \multicolumn{1}{|c|}{21.3493} 
	\\\hline	
	\multicolumn{1}{|c|}{$\nu=2.1$} & \multicolumn{1}{|c|}{} & 
	\multicolumn{1}{|c|}{} & \multicolumn{1}{|c|}{} & \multicolumn{1}{|c|}{}\\
	
	\multicolumn{1}{|c|}{($\beta=51/31$)}& \multicolumn{1}{|c|}{0.1535} & 
	\multicolumn{1}{|c|}{1.6649} & \multicolumn{1}{|c|}{0.1535} & \multicolumn{1}{|c|}{1.6844} 
	\\\hline
	\multicolumn{1}{|c|}{$\nu=3$} & \multicolumn{1}{|c|}{} & 
	\multicolumn{1}{|c|}{} & \multicolumn{1}{|c|}{} & \multicolumn{1}{|c|}{}\\
	
	\multicolumn{1}{|c|}{($\beta=3/2$)}& \multicolumn{1}{|c|}{0.0480} & 
	\multicolumn{1}{|c|}{1.4575} & \multicolumn{1}{|c|}{0.0480} & \multicolumn{1}{|c|}{1.4567} 
	\\\hline
	\multicolumn{1}{|c|}{$\nu=4$} & \multicolumn{1}{|c|}{} & 
	\multicolumn{1}{|c|}{} & \multicolumn{1}{|c|}{} & \multicolumn{1}{|c|}{}\\
	
	\multicolumn{1}{|c|}{($\beta=7/5$)}& \multicolumn{1}{|c|}{0.0585} & 
	\multicolumn{1}{|c|}{1.3061} & \multicolumn{1}{|c|}{0.0585} & \multicolumn{1}{|c|}{1.3049} 
	\\\hline
	\multicolumn{1}{|c|}{$\nu=5$} & \multicolumn{1}{|c|}{} & 
	\multicolumn{1}{|c|}{} & \multicolumn{1}{|c|}{} & \multicolumn{1}{|c|}{}\\
	
	\multicolumn{1}{|c|}{($\beta=4/3$)}& \multicolumn{1}{|c|}{0.0119} & 
	\multicolumn{1}{|c|}{1.2707} & \multicolumn{1}{|c|}{0.0119} & \multicolumn{1}{|c|}{1.2736} 
	\\\hline
	\multicolumn{1}{|c|}{$\nu=7$} & \multicolumn{1}{|c|}{} & 
	\multicolumn{1}{|c|}{} & \multicolumn{1}{|c|}{} & \multicolumn{1}{|c|}{}\\
	
	\multicolumn{1}{|c|}{($\beta=5/4$)}& \multicolumn{1}{|c|}{-0.0383} & 
	\multicolumn{1}{|c|}{1.1307} & \multicolumn{1}{|c|}{-0.0383} & \multicolumn{1}{|c|}{1.1312} 
	\\\hline
	\multicolumn{1}{|c|}{$\nu=10$} & \multicolumn{1}{|c|}{} & 
	\multicolumn{1}{|c|}{} & \multicolumn{1}{|c|}{} & \multicolumn{1}{|c|}{}\\
	
	\multicolumn{1}{|c|}{($\beta=13/11$)}& \multicolumn{1}{|c|}{0.0087} & 
	\multicolumn{1}{|c|}{1.0961} & \multicolumn{1}{|c|}{0.0087} & \multicolumn{1}{|c|}{1.0919} 
	\\\hline
	\multicolumn{1}{|c|}{$\nu=15$} & \multicolumn{1}{|c|}{} & 
	\multicolumn{1}{|c|}{} & \multicolumn{1}{|c|}{} & \multicolumn{1}{|c|}{}\\
	
	\multicolumn{1}{|c|}{($\beta=9/8$)}& \multicolumn{1}{|c|}{0.0045} & 
	\multicolumn{1}{|c|}{1.0760} & \multicolumn{1}{|c|}{0.0045} & \multicolumn{1}{|c|}{1.0737} 
	\\\hline							
	\end{tabular}

	\label{1tab:compare_hellinger_est_cauchy}
\end{table}	
\section{Summary and concluding remarks}
\label{1sec:summary_remarks}
Projection theorems of Jones et al. ($\mathscr{I}_\alpha$) and Hellinger ($D_\alpha$) divergences tell us that the reverse projection, respectively, on the power-law families $\mathbb{M}^{(\alpha)}$ and $\mathscr{E}^{(\alpha)}$ turns out to be a
forward projection on a ``simpler'' (linear or $\alpha$-linear) family which, in turn, reduces to a linear problem on the underlying probability distribution.
The applicability of these projection theorems known in the literature were limited as they dealt only discrete and canonical models. In this work, we first generalized the associated power-law families to a more general set-up including the continuous case. 
We observed that these two families are related through an escort transformation, apart from the $\alpha\leftrightarrow (2-\alpha)$ transformation studied in \cite{Tsallis09B}.
We then introduced the notion of regularity for these power-law families analogous to the concept of regular exponential family (or full rank family). This makes these families unique from similar families studied in the literature, namely,  the $\phi$-exponential class defined in \cite{Naudts04J}, the $\mathcal{F}_{[\beta h]}$ class defined in \cite{CsiszarM12J} and so on.
We then extended the projection theorems of $\mathscr{I}_{\alpha}$ and $D_{\alpha}$ to these general form of the power-law families by solving the respective estimating equations. We observed that, for regular families, the new estimating equations coincide with the respective projection equations, similar to the ones in the canonical case. We also observed that both the estimating equations were characterized by some specific statistics of the samples.
Such a characterization is well-known in the literature for the pair MLE  and regular exponential family (See, for example, \cite[pp. 149--150]{Brown86B}).
We finally showed that the Student and Cauchy distributions respectively form a regular $\mathbb{M}^{(\alpha)}$ and $\mathscr{E}^{(\alpha)}$, and they are the escort distributions of each other. We then applied the above projection theorems to find generalized estimators for their parameters.
Interestingly, both the  Basu  et al.
and Jones  et al. estimators for the
mean and covariance parameters of a $d$-dimensional Student distribution 
for $\nu >2$ are the same as the
MLE of the respective parameters of a normal distribution, where $\nu$ is the degrees of freedom. We also found a more general class of distributions that includes Student distributions for which both the estimations are the same (Theorem \ref{1thm:same_estimation_under_basu_e_al_and_Jones_et_al}). In \cite[Eq. (38)]{EguchiKK11J}, it was shown that sample mean and sample variance are still the generalized estimators (Jones et al. or Basu et al.) for compactly supported Student distributions (that is, $\nu <-d$), but with the assumption that all samples are from true distribution. We showed that, in the presence of outliers, generalized estimator for the mean parameter might differ from the sample mean (Section \ref{1sec:Jones_et_al_on_student_t_alpha_bigger_1}).
Next we found a class of generalized Hellinger estimators
for the location and scale parameters of Cauchy distributions that involve a continuous estimate for the sample empirical measure. In particular, we found the estimators by using the uniform and Epanechnikov kernel density estimates for the empirical measure. We summarized this by a simulation study where we observed that these estimators are close to the true value of the parameters of Cauchy distributions when their mean and variance are finite. 
It is well-known that
the MLE for Student or Cauchy
distributions do not have a closed form solution.
To overcome this, standard iterative methods such as Newton-Raphson,
Gauss-Newton, EM are used in the literature \cite{Barnett66J1, DempsterLR77J}. However, the sequence of estimators in these
iterative methods may converge to a local maximum  and the rate of convergence is also slow \cite{Barnett66J1, LiuR95J}.
Later some generalized
iterative methods such as ECM, ECME were
proposed, for example, in \cite{LiuR95J}, where the rate of
convergence was made faster than the previous methods. But
again,  they converge only
to a local maximum. These difficulties can be overcome by some of the generalized estimators that
we studied in this paper as they are not only robust but also have closed form solution.

\section*{Acknowledgements}
Atin Gayen is supported by an INSPIRE fellowship of the Department of Science and Technology, Govt. of India. Part of this work was carried out when the authors were with the Indian Institute of Technology Indore. The authors would like to thank Professor Arup Bose for his constructive comments. The authors would also like to thank Professor Michel Broniatowski for the discussions they had with him during his visit to India through the VAJRA programme of Govt. of India. The authors also thank the Editor, Associate Editor and the referees for their valuable comments that improved the presentation of the paper.

\section*{Appendix: Projection theorem of density power divergence}
 The Projection theorem and the Pythagorean property of the more general class of Bregman divergences were established by Csisz\'ar and Mat\'u\v{s} \cite{CsiszarM12J} using tools from convex analysis. The density power divergence $B_\alpha$ is a subclass of the Bregman divergences. However, it is not easy to extract the results for the $B_\alpha$-divergence from \cite{CsiszarM12J}. Ohara and Wada \cite{OharaW10J} also studied this by considering a specific form of the associated parametric family. 
In this section we derive the projection results for the $B_\alpha$-divergence in the discrete case using some elementary tools.
We must point out that the geometry of $B_\alpha$-divergence is quite a natural extension of that of $I$-divergence. Let $\mathbb{S}$ be a finite alphabet set and
$\mathcal{P} := \mathcal{P}(\mathbb{S})$ be the space of all probability distributions on $\mathbb{S}$. 
Then for any $p,q\in \mathcal{P}$, from (\ref{1defn:B_alpha_divergence}), the $B_\alpha$-divergence in the discrete case can be written as
\begin{equation}
\label{1eqn:B_alpha_divergence_in_discrete_form}
B_\alpha (p, q) = 
\frac{\alpha}{1 - \alpha}\sum\limits_{x\in\mathbb{S}}
p (x)q(x)^{\alpha -1}
- \frac{1}{1 - \alpha}\sum\limits_{x\in\mathbb{S}}p(x)^{\alpha} 
+ \sum\limits_{x\in\mathbb{S}} q(x)^{\alpha}.
\end{equation} 
Let us also recall the definitions of reverse and forward projections given in (\ref{1eqn:reverse_projection}) and (\ref{1eqn:forward_projection}). 
For $p\in \mathcal{P}$, we shall denote the support of $p$ as $\text{Supp}(p)$.
For $\mathbb{C}\subset \mathcal{P}$, $\text{Supp}(\mathbb{C})$
is defined as the union of support of members of $\mathbb{C}$. 

We now show the Pythagorean inequality of $B_\alpha$-divergence
in connection with the forward projection on a non-empty closed, convex set (hence compact, since $\mathbb{S}$ is finite). Thus the existence of forward projection always guaranteed, since $B_\alpha$ is lower semi-continuous \cite[Lem. 2.12]{CsiszarM12J}. 
In the following we assume that $\text{Supp}(q) = \mathbb{S}$.
\vspace*{0.2cm}

\begin{theorem}
	\label{1thm:pythagorean_theorem}
	Let $p^*$ be the forward $B_\alpha$-projection of $q$ on a closed and convex
	set $\mathbb{C}$. Then
	\begin{equation}
	\label{1eqn:Pythagorean_inequality}
	B_\alpha (p,q) \geq B_\alpha (p,p^*)+B_\alpha(p^*,q) \quad \forall p\in \mathbb{C}.
	\end{equation}
	Further if $\alpha <1$, $\text{Supp}(\mathbb{C}) = \text{Supp}(p^*)$.
\end{theorem}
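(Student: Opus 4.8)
The plan is to exploit the convexity of $\mathbb{C}$ together with the first-order optimality of the forward projection $p^*$. For a fixed $p\in\mathbb{C}$, set $p_\lambda := (1-\lambda)p^* + \lambda p$, which lies in $\mathbb{C}$ for every $\lambda\in[0,1]$ by convexity, and define $\phi(\lambda) := B_\alpha(p_\lambda,q)$. Since $p^*$ minimizes $B_\alpha(\cdot,q)$ over $\mathbb{C}$, the function $\phi$ attains its minimum at $\lambda=0$, so its one-sided derivative satisfies $\phi'(0^+)\geq 0$ whenever it is finite. First I would differentiate $\phi$ term by term (legitimate since $\mathbb{S}$ is finite), obtaining
\begin{equation*}
\phi'(0) = \frac{\alpha}{1-\alpha}\sum_{x\in\mathbb{S}} \bigl(p(x)-p^*(x)\bigr)\bigl[q(x)^{\alpha-1} - p^*(x)^{\alpha-1}\bigr].
\end{equation*}

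The crucial observation is the exact identity
\begin{equation*}
B_\alpha(p,q) - B_\alpha(p,p^*) - B_\alpha(p^*,q) = \frac{\alpha}{1-\alpha}\sum_{x\in\mathbb{S}} \bigl(p(x)-p^*(x)\bigr)\bigl[q(x)^{\alpha-1} - p^*(x)^{\alpha-1}\bigr],
\end{equation*}
which I would verify by expanding all three divergences from (\ref{1eqn:B_alpha_divergence_in_discrete_form}), cancelling the $\sum_x p(x)^\alpha$ and $\sum_x q(x)^\alpha$ contributions, and using $p^*(x)^\alpha = p^*(x)\,p^*(x)^{\alpha-1}$ to combine the survivors. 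Since the right-hand side is precisely $\phi'(0)$, the Pythagorean inequality (\ref{1eqn:Pythagorean_inequality}) is \emph{equivalent} to the first-order condition $\phi'(0)\geq 0$, and the proof reduces to justifying that condition.

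For $\alpha>1$ this is immediate: the map $t\mapsto t^\alpha$ has vanishing derivative at $t=0$, so even at points where $p^*(x)=0$ the relevant term contributes nothing, $\phi'(0)$ is finite, and $\phi'(0^+)\geq 0$ follows from minimality. For $\alpha<1$ the same map $t\mapsto t^\alpha$ has \emph{infinite} slope at $0$, and this is exactly what forces the support statement. I would argue by contradiction: if some $p\in\mathbb{C}$ had $p(x_0)>0$ while $p^*(x_0)=0$, then on the coordinates outside $\text{Supp}(p^*)$ one has $p_\lambda(x)^\alpha=\lambda^\alpha p(x)^\alpha$, contributing a term $-\tfrac{1}{1-\alpha}\lambda^\alpha\sum_{x:\,p^*(x)=0}p(x)^\alpha$ to $\phi(\lambda)$. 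Since $\tfrac{1}{1-\alpha}>0$ and $\lambda^\alpha$ dominates the $O(\lambda)$ smooth part as $\lambda\to 0^+$, this yields $\phi(\lambda)<\phi(0)$ for all small $\lambda>0$, contradicting the minimality of $p^*$. Hence $\text{Supp}(\mathbb{C})\subseteq\text{Supp}(p^*)$, and the reverse inclusion is trivial since $p^*\in\mathbb{C}$, giving $\text{Supp}(\mathbb{C})=\text{Supp}(p^*)$. With this equality in hand, every $p\in\mathbb{C}$ satisfies $\text{Supp}(p)\subseteq\text{Supp}(p^*)$, so $p^*(x)^{\alpha-1}$ enters only where $p^*(x)>0$, $\phi'(0)$ is finite, and the inequality follows as before.

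The main obstacle is precisely this boundary behaviour at $\alpha<1$: the divergence $B_\alpha(\cdot,p^*)$ blows up when mass escapes $\text{Supp}(p^*)$, so the naive term-by-term differentiation of $\phi$ is invalid until the support equality is secured. I would therefore treat the support statement not as an afterthought but as a prerequisite for the $\alpha<1$ case of the inequality, establishing it first via the $\lambda^\alpha$-domination estimate above and only then invoking the first-order condition $\phi'(0)\geq 0$.
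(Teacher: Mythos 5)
Your proposal is correct and follows essentially the same route as the paper: perturb $p^*$ along the segment toward $p\in\mathbb{C}$, identify the one-sided derivative of $\lambda\mapsto B_\alpha(p_\lambda,q)$ at $\lambda=0$ with $B_\alpha(p,q)-B_\alpha(p,p^*)-B_\alpha(p^*,q)$, and invoke first-order optimality. The only (harmless) difference is organizational: you secure the support equality for $\alpha<1$ first, via the $\lambda^\alpha$-domination estimate, so that the derivative at $0$ is finite, whereas the paper differentiates at an intermediate point $\tilde t>0$ via the mean value theorem, passes to the limit $t\downarrow 0$, and then reads off the support claim from the fact that the limit would be $-\infty$ otherwise.
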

\vspace*{0.3cm}

\begin{proof}
	Let $p\in\mathbb{C}$ and define, for $t\in [0,1]$ 
	and $x\in \mathbb{S}$,
	\begin{equation}
	\label{convex_combination_of_p}
	p_t(x) = (1-t) p^*(x) + t p(x).
	\end{equation}
	Since $\mathbb{C}$ is convex, $p_t\in \mathbb{C}$. By mean-value theorem, for each $t\in (0,1)$,
	\begin{eqnarray}
	\label{1eqn:derivative}
	0 & \leq & \frac{1}{t}\big[B_\alpha (p_t, q) -B_\alpha (p^*, q)\big]\nonumber\\
	& = & \frac{1}{t}\big[B_\alpha (p_t, q) -B_\alpha (p_0, q)\big]\nonumber\\
	& = & \frac{d}{dt}B_\alpha (p_t, q)\big |_{t=\tilde{t}} \quad \text{for some } \tilde{t}\in (0,t).
	\end{eqnarray}
	From (\ref{1eqn:B_alpha_divergence_in_discrete_form}), 
	we have
	\begin{eqnarray*}
		\dfrac{d}{dt} B_\alpha (p_t, q)=\dfrac{\alpha}{\alpha-1}\sum\limits_{x\in \mathbb{S}} \big[p(x)-p^*(x)\big]\big[p_t(x)^{\alpha-1}-q(x)^{\alpha-1}\big].
	\end{eqnarray*}
	Therefore (\ref{1eqn:derivative}) implies
	\begin{equation}
	\label{1eqn:derivative_calculation}
	\dfrac {\alpha}{\alpha-1}\sum\limits_{x\in \mathbb{S}} \big[p(x)-p^*(x)\big]\big[p_{\tilde{t}} (x)^{\alpha-1}-q(x)^{\alpha-1}\big] \geq 0.
	\end{equation}
	Hence, as $t\downarrow 0$, we have
	\begin{equation}
	\label{1eqn:pythagorean_equivalent}
	\dfrac {\alpha}{\alpha-1}\sum\limits_{x\in \mathbb{S}} \big[p(x)-p^*(x)\big]\big[p^*(x)^{\alpha-1}-q(x)^{\alpha-1}\big] \geq 0,
	\end{equation}
	which implies
	\begin{equation*}
	B_\alpha (p,q) \geq B_\alpha (p,p^*)+B_\alpha(p^*,q).
	\end{equation*}
	
	If $\text{Supp}(p^*)\neq \text{Supp}(\mathbb{C})$, then there exists
	$p\in\mathbb{C}$ and $x\in\mathbb{S}$ such that $p(x)>0$ but
	$p^*(x) = 0$. Hence if $\alpha <1$, then the left-hand side of (\ref{1eqn:derivative_calculation}) goes to $-\infty$ as $t\downarrow 0$, which contradicts (\ref{1eqn:derivative_calculation}). This proves the claim.
\end{proof}
\begin{remark}
	\label{1rem:support_of_q_notequal_support_of_projection}
	If $\alpha>1$, in general, $\text{Supp}(p^*)\neq \text{Supp}(\mathbb{C})$.
	\cite[Ex.~2]{KumarS15J2} serves as a counterexample here as well.
	It follows from the following fact. Since $\mathbb{S}$ is finite, the $B_\alpha$-divergence can be
	written as
	\begin{equation*}
	B_\alpha(p,u) = \frac{1}{1-\alpha} |\mathbb{S}|^{1-\alpha}-\frac{1}{1-\alpha}
	\sum\limits_{x\in\mathbb{S}} p(x)^\alpha,
	\end{equation*}
	where $u$ is the uniform distribution on $\mathbb{S}$ and
	$|~\mathbb{S}~|$ denotes the cardinality of $\mathbb{S}$. This
	implies
	\begin{equation*}
	\displaystyle\arg\min_{p\in\mathbb{C}}B_\alpha(p,u) = \displaystyle\arg\max_{p\in\mathbb{C}} H_\alpha(p),
	\end{equation*}
	where $H_\alpha(p):= \frac{1}{1-\alpha}\ln \sum_{x\in\mathbb{S}} p(x)^\alpha$, the R\'enyi entropy of $p$ of order $\alpha$. That is,
	forward $B_\alpha$-projection of the uniform distribution
	on $\mathbb{C}$ is same as the maximizer of R\'enyi entropy
	on $\mathbb{C}$. The same is true when $B_\alpha$ is replaced by 
	$\mathscr{I}_\alpha$ or $D_\alpha$.
\end{remark}

We will now present a situation when the equality holds in (\ref{1eqn:Pythagorean_inequality}).

\begin{definition}
	\label{1defn:linear_family}
	The {\em linear family}, determined by $k$ real valued functions $f_i, i\in\{1,\ldots,k\}$ on $\mathbb{S}$ and $k$ real numbers $a_i, i\in\{1,\ldots,k\}$, is defined as
	\begin{equation}
	\label{1eqn:linear_family}
	\mathbb{L} := \Big\{ p\in \mathcal{P} : \sum\limits_{x\in \mathbb{S}} p(x)f_i(x) = a_i,\quad r\in\{1,\ldots,k\} \Big\}.
	\end{equation} 
\end{definition} 
\begin{theorem}
	\label{1thm:pythagorean_equality}
	Let $p^*$ be the forward $B_\alpha$-projection of $q$ on $\mathbb{L}$. The following hold.
	\begin{enumerate}
		\item[(a)] If $\alpha<1$ then the Pythagorean equality holds, that is,
		\begin{equation}
		\label{1eqn:pythagorean_equality}
		B_\alpha (p,q) = B_\alpha (p,p^*)+B_\alpha(p^*,q) \quad\forall p\in \mathbb{L}.
		\end{equation}
		
		\item[(b)] If $\alpha>1$ and $\text{Supp}(p^*) = \text{Supp}(\mathbb{L})$ then the Pythagorean equality (\ref{1eqn:pythagorean_equality}) holds.
	\end{enumerate}
\end{theorem}
\vspace*{0.2cm}

\begin{proof}
	(a) Let $p_t$ be as in (\ref{convex_combination_of_p}). Since $\text{Supp}(p^*) = \text{Supp}(\mathbb{L})$, there exists $t' < 0$ such that $p_t = (1 -t)p^* + tp\in \mathbb{L}$ for $t\in (t',0)$. Hence,
	proceeding as in Theorem \ref{1thm:pythagorean_theorem},
	for every $t\in (t',0)$, there exists $\tilde{t}\in (t,0)$ such that
	\begin{equation*}
	\frac {\alpha}{\alpha-1}\small\sum\limits_{x\in \mathbb{S}} \big[p(x)-p^*(x)\big]\big[p_{\tilde{t}} (x)^{\alpha-1}-q(x)^{\alpha-1}\big] \le 0.
	\end{equation*}
	Hence we get (\ref{1eqn:pythagorean_equivalent}) with a reversed inequality. Thus we have equality in (\ref{1eqn:pythagorean_equivalent}). Hence we have  (\ref{1eqn:pythagorean_equality}).

	(b) Similar to (a).
\end{proof}
\vspace*{0.2cm}

When $\alpha >1$, equality in (\ref{1eqn:pythagorean_equality}) does not hold in general. In the following we present an example where the equality in (\ref{1eqn:pythagorean_equality}) does not hold.
\vspace*{0.2cm}

\begin{example}
	\label{1expl:pyth_ineq_strict}
	Let
	$\alpha=2$, $\mathbb{S}=\{1,2,3,4\}$ and
	$$\mathbb{L}:= \{ p\in \mathcal{P}:p(1)-3p(2)-5p(3)-6p(4)=0\}.$$
	In view of Remark \ref{1rem:support_of_q_notequal_support_of_projection}
	and \cite[Ex.~2]{KumarS15J2},
	we see that $p^*=[3/4,1/4,0,0]^top$ is the forward
	$B_\alpha$-projection of the uniform distribution $u$ on $\mathbb{L}$. 
	However there exists $p\in\mathbb{L}$, say $p = [157/200,97/600,1/50,1/30]^top$, that satisfies only
	the strict inequality in (\ref{1eqn:Pythagorean_inequality}). The
	issue here is that $\text{Supp}(p^*)$ $\subsetneq\text{Supp}(\mathbb{L})$.
\end{example}
\vspace*{0.2cm}

We now find an explicit expression of the forward $B_\alpha$-projection in both the cases $\alpha<1$ and $\alpha >1$ separately.
\vspace*{0.2cm}

\begin{theorem}
	\label{1thm:forward_projection}
	Let $q\in \mathcal{P}$ and let $\mathbb{L}$ be a linear family of probability 
	distributions as in (\ref{1eqn:linear_family}).
	\begin{enumerate}
		\item[(a)] If $\alpha<1$, the forward $B_\alpha$-projection $p^*$ of $q$ on $\mathbb{L}$ satisfies
		\begin{eqnarray}
		\label{1eqn:forward_projection_alpha<1}
		p^*(x) = \big[q(x)^{\alpha-1}+ F + \theta^top  f(x)\big]^{\frac{1}{\alpha-1}} \quad\forall x\in \text{Supp}(\mathbb{L}),
		\end{eqnarray}
		with $\theta := [\theta_1,\dots,\theta_k]^top$, $f:= [f_1,\dots,f_k]^top$ where
		$\theta_1,\dots,\theta_k$ are some scalars and $F$ is a constant.
		\vspace*{0.2cm}
		
		\item[(b)] If $\alpha>1$, the forward $B_\alpha$-projection $p^*$ of $q$ on $\mathbb{L}$ satisfies
		\begin{eqnarray}
		\label{1eqn:forward_projection_alpha>1}
		p^*(x) = \big[q(x)^{\alpha-1}+ F +\theta^top f(x)
		\big]_+^{\frac{1}{\alpha-1}} \quad\forall x\in \mathbb{S},
		\end{eqnarray}
		where $\theta$, $f$ and $F$ are as in (a).
	\end{enumerate}
\end{theorem}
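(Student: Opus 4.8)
The plan is to realize $p^*$ as the solution of a constrained convex optimization problem and to read its form off the first-order optimality conditions, treating the two sign regimes of $\alpha$ by how the non-negativity constraints behave. First I would record, using (\ref{1eqn:B_alpha_divergence_in_discrete_form}), that
\[
\frac{\partial}{\partial p(x)} B_\alpha(p,q) = \frac{\alpha}{1-\alpha}\big(q(x)^{\alpha-1} - p(x)^{\alpha-1}\big),
\]
and note that $p\mapsto B_\alpha(p,q)$ is convex for every $\alpha\neq 1$: the term $-\tfrac{1}{1-\alpha}\sum_x p(x)^\alpha$ is convex in both regimes since $t\mapsto t^\alpha$ is concave for $\alpha<1$ and convex for $\alpha>1$, while the sign of $\tfrac{1}{1-\alpha}$ flips accordingly. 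Minimizing over $\mathbb{L}$ amounts to minimizing subject to the affine constraints $\sum_x p(x)=1$ and $\sum_x p(x)f_i(x)=a_i$, together with the inequalities $p(x)\geq 0$.

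For part (a), with $\alpha<1$, Theorem \ref{1thm:pythagorean_theorem} already gives $\mathrm{Supp}(p^*)=\mathrm{Supp}(\mathbb{L})$, so the non-negativity constraints are inactive on $\mathrm{Supp}(\mathbb{L})$ and only the equality constraints bind. Introducing Lagrange multipliers $\lambda$ for normalization and $\theta_i'$ for the linear constraints, stationarity at $p^*$ reads $\tfrac{\alpha}{1-\alpha}\big(q(x)^{\alpha-1}-p^*(x)^{\alpha-1}\big)=\lambda+\sum_i\theta_i' f_i(x)$ on $\mathrm{Supp}(\mathbb{L})$; rearranging and setting $F:=-\tfrac{1-\alpha}{\alpha}\lambda$, $\theta_i:=-\tfrac{1-\alpha}{\alpha}\theta_i'$ yields $p^*(x)^{\alpha-1}=q(x)^{\alpha-1}+F+\theta^T f(x)$, which is (\ref{1eqn:forward_projection_alpha<1}). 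Equivalently, one can bypass multipliers and use the equality refinement of (\ref{1eqn:pythagorean_equivalent}) valid for linear families: equality there forces the vector $\big(p^*(x)^{\alpha-1}-q(x)^{\alpha-1}\big)_x$ to be orthogonal to every difference $p-p^*$ with $p\in\mathbb{L}$, hence to the tangent space $\{v:\sum_x v(x)=0,\ \sum_x v(x)f_i(x)=0\}$, so it lies in $\mathrm{span}\{1,f_1,\dots,f_k\}$, giving exactly the asserted form.

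For part (b), with $\alpha>1$, the support of $p^*$ may be a proper subset of $\mathrm{Supp}(\mathbb{L})$, so the inequality constraints can be active and I would invoke the full KKT system with multipliers $\gamma_x\geq 0$ for $p(x)\geq 0$ and complementary slackness $\gamma_x p^*(x)=0$. On $\{x:p^*(x)>0\}$ we have $\gamma_x=0$, and the computation from (a) gives $p^*(x)^{\alpha-1}=q(x)^{\alpha-1}+F+\theta^T f(x)>0$. On $\{x:p^*(x)=0\}$ we have $p^*(x)^{\alpha-1}=0$ (as $\alpha-1>0$), and substituting the stationarity relation back shows
\[
q(x)^{\alpha-1}+F+\theta^T f(x)=\tfrac{1-\alpha}{\alpha}\,\gamma_x\leq 0,
\]
since $1-\alpha<0$ and $\gamma_x\geq 0$. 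Thus the bracket is non-positive precisely where $p^*$ vanishes, so applying $[\,\cdot\,]_+^{1/(\alpha-1)}$ reproduces $p^*$ on all of $\mathbb{S}$, giving (\ref{1eqn:forward_projection_alpha>1}).

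The main obstacle is the $\alpha>1$ case. Unlike for $\alpha<1$, the projection need not be supported on all of $\mathrm{Supp}(\mathbb{L})$ (cf. Example \ref{1expl:strict_pythagorean_inequality}), so stationarity alone does not determine $p^*$; the delicate step is verifying, through complementary slackness and the sign of $\tfrac{1-\alpha}{\alpha}$, that $q(x)^{\alpha-1}+F+\theta^T f(x)$ is \emph{exactly} non-positive on the zero set of $p^*$, which is what legitimizes the positive-part truncation throughout $\mathbb{S}$.
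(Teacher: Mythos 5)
Your proposal is correct and matches the paper's proof in substance: part (b) is precisely the paper's KKT argument (stationarity with multipliers for the affine and non-negativity constraints, complementary slackness, and the sign of $(\alpha-1)/\alpha$ on the zero set of $p^*$ legitimizing the $[\,\cdot\,]_+$ truncation), while your ``equivalent'' multiplier-free derivation of part (a) --- reading off from the Pythagorean equality that $p^*(\cdot)^{\alpha-1}-q(\cdot)^{\alpha-1}$ is orthogonal to every difference $p-p^*$ with $p\in\mathbb{L}$ and hence lies in $\mathrm{span}\{1,f_1,\dots,f_k\}$ --- is exactly the Csisz\'ar--Shields-style argument the paper gives. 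The only variations are cosmetic: you lead part (a) with a Lagrangian stationarity computation, which is legitimate once Theorem \ref{1thm:pythagorean_theorem} guarantees $\text{Supp}(p^*)=\text{Supp}(\mathbb{L})$ so that the non-negativity constraints are inactive there; and in the orthogonality route you should, as the paper does, justify the step from ``orthogonal to all $p-p^*$'' to ``orthogonal to the whole tangent space'' by observing that the orthogonal complement of $\mathrm{span}\{f_i(\cdot)-a_i\}$ contains the strictly positive vector $p^*$ and is therefore spanned by its probability vectors.
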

\vspace*{0.2cm}

\begin{proof}
	\begin{itemize}
		\item[(a)] The proof is similar to that for
		$I$-divergence in \cite[Th. 3.2]{CsiszarS04B}.
		The linear family in Definition \ref{1defn:linear_family} can be re-written as
		\begin{equation}
		\label{1eqn:linear_family_equivalent}
		\mathbb{L}:= \Big\{p\in\mathcal{P} : \sum\limits_{x\in \text{Supp}(\mathbb{L})} p(x)f_i(x) = a_i, \quad i\in\{1,\dots,k\}\Big\}.
		\end{equation}
		Let $\mathbb{H}$ be the subspace of $\mathbb{R}^{|\text{Supp}(\mathbb{L})|}$ spanned by the $k$ vectors $f_1(\cdot)-a_1, \dots, f_k(\cdot)-a_k$. Then every $p\in \mathbb{L}$ can be thought of a $|\text{Supp}(\mathbb{L})|$-dimensional vector in $\mathbb{H}^{\perp}$. Hence $\mathbb{H}^{\perp}$ is a subspace of $\mathbb{R}^{|\text{Supp}(\mathbb{L})|}$ that contains a vector whose components are strictly positive since $p^*\in\mathbb{L}$ and $\text{Supp}(p^*)=\text{Supp}
		(\mathbb{L})$. It follows that $\mathbb{H}^{\perp}$ is spanned by its	probability vectors. From (\ref{1eqn:pythagorean_equivalent}) we see that (\ref{1eqn:pythagorean_equality}) is equivalent to
		\begin{equation}
		\label{1eqn:pythagorean_equality_equivalent}
		\sum\limits_{x\in \mathbb{S}} \big[p(x)-p^*(x)\big]\big[p^*(x)^{\alpha-1}-q(x)^{\alpha-1}\big] = 0\quad\forall p\in\mathbb{L}.
		\end{equation}
		This implies that the vector
		\begin{equation*}
		p^*(\cdot)^{\alpha-1} - q(\cdot)^{\alpha-1} - \sum\limits_x p^*(x)\big[p^*(x)^{\alpha-1}-q(x)^{\alpha-1}
		\big] \in (\mathbb{H}^\perp)^\perp=\mathbb{H}.
		\end{equation*}	
		Hence
		\begin{eqnarray*}
			p^*(x)^{\alpha-1} - q(x)^{\alpha-1} - \sum\limits_x p^*(x)\big[p^*(x)^{\alpha-1} - q(x)^{\alpha-1}\big]	= \sum\limits_{i=1}^k c_i\big[f_i(x)-a_i\big]
			\quad\forall x\in \text{Supp}(\mathbb{L})
		\end{eqnarray*}
		for some scalars $c_1,\ldots,c_k$. This implies (\ref{1eqn:forward_projection_alpha<1}) for appropriate choices of $F$ and $\theta_1,\ldots,\theta_k$.
		\vspace{0.1cm}
		
		\item[(b)] The proof of this case is similar to that of $\mathscr{I}_\alpha$-divergence \cite[Th. 14(b)]{KumarS15J2}. The optimization problem concerning the forward $B_\alpha$-projection is
		\begin{align}
		\min_p \, & B_{\alpha}(p,q)\label{1eqn:min}\\
		\mbox{subject to } &\sum\limits_x p(x)f_i(x) = a_i, \quad i\in\{1,\ldots,k\},\label{1eqn:linear_constraints}\\
		&\sum\limits_x p(x)       = 1, \label{1eqn:probability_constraint}\\
		&p(x)                    \ge 0 \quad\forall x \in \mathbb{S}. \label{1eqn:positivity_constraints}
		\end{align}
		Hence by \cite[Prop.~3.3.7]{Bertsekas03B}, there exists Lagrange multipliers $\lambda_1,\dots,\lambda_k$, 
		$\nu$ and $(\mu(x),$ $x\in \mathbb{S})$, respectively, associated with the above constraints such that, for $x\in \mathbb{S}$,
		\begin{eqnarray}
		\label{1eqn:lagrange}
		\dfrac{\partial}{\partial p(x)}B_\alpha(p,q)\Big|_{p=p^*} & = & - \sum\limits_{i=1}^k \lambda_i [f_i(x) - a_i] + \mu(x) - \nu,\\
		\label{1eqn:feasibility_condition}
		\mu(x) & \ge & 0,\\
		\label{1eqn:slackness_condition}
		\mu(x)p^*(x) & = & 0.
		\end{eqnarray}
		Since
		\begin{equation}
		\label{1eqn:partial_derivative}
		\dfrac{\partial}{\partial p(x)}B_\alpha(p,q)=\dfrac{\alpha}{\alpha-1}
		\big[p(x)^{\alpha-1}-q(x)^{\alpha-1}\big],
		\end{equation}
		(\ref{1eqn:lagrange}) can be re-written as
		\begin{equation}
		\label{1eqn:lagrange1}
		\dfrac{\alpha}{\alpha-1}\big[p^*(x)^{\alpha-1} - q(x)^{\alpha-1}\big] =         - \sum\limits_{i=1}^k\lambda_i\big[f_i(x)-a_i\big] + \mu(x) - \nu\quad \text{ for } x\in \mathbb{S}.
		\end{equation}
		Multiplying both sides by $p^*(x)$ and summing over all $x\in \mathbb{S}$, we get
		\begin{equation*}
		\nu = \dfrac{\alpha}{\alpha-1}\sum\limits_{x\in \mathbb{S}} p^*(x)\big[ q(x)^{\alpha-1}-p^*(x)^{\alpha-1}\big].
		\end{equation*}
		For $x\in \text{Supp}(p^*)$, from (\ref{1eqn:slackness_condition}),
		we must have 
		$\mu(x)=0$. Then, from (\ref{1eqn:lagrange1}), we have
		\begin{align}
		\label{1eqn:projection_alpha>1_equivalent1}
		p^*(x)^{\alpha-1} = q(x)^{\alpha-1} - \dfrac{\alpha-1}{\alpha} \sum\limits_{i=1}^k\lambda_i
		\big[f_i(x)-a_i\big] - \dfrac{\alpha-1}{\alpha}\nu.
		\end{align}
		If $p^*(x)=0$, from (\ref{1eqn:lagrange1}) we get
		\begin{eqnarray}
		\label{1eqn:projection_alpha>1_equivalent2}
		q(x)^{\alpha-1} - \dfrac{\alpha-1}{\alpha} \sum\limits_{i=1}^k\lambda_i\big[f_i(x)-a_i\big] - \dfrac{\alpha-1}{\alpha}\nu = -
		\dfrac{\alpha-1}{\alpha}\mu(x)\leq 0.
		\end{eqnarray}
		Combining (\ref{1eqn:projection_alpha>1_equivalent1}) and (\ref{1eqn:projection_alpha>1_equivalent2}) we get (\ref{1eqn:forward_projection_alpha>1}).
	\end{itemize}
\end{proof}

Theorem \ref{1thm:forward_projection} suggests us to define a parametric family of probability distributions that is a generalization of the usual exponential family. We call it a $\mathbb{B}^{(\alpha)}$-family. First we formally define this family and then show an orthogonality relationship between this family and the linear family. As a consequence we will also show that the reverse $B_{\alpha}$-projection on a 
$\mathbb{B}^{(\alpha)}$-family is same as the forward projection on a linear family.
\vspace*{0.2cm}

\begin{definition}
	\label{1defn:twin_alpha_powerlaw_family}
	Let $q\in\mathcal{P}$ where $\text{Supp}(q)=\mathbb{S}$ for $\alpha>1$ and $f=[f_1,\dots,f_k]^top$ where $f_i$ for $i\in\{1,\dots,k\}$ be real valued function on $\mathbb{S}$.
	The $k$-parameter canonical $\mathbb{B}^{(\alpha)} := \mathbb{B}^{(\alpha)}(q, f)$ family of probability distributions characterized by $q$ and $f$ is 
	defined by $\mathbb{B}^{(\alpha)} = \{p_\theta : \theta\in\Theta\}\subset\mathcal{P}$ where
	\begin{eqnarray}
	\label{1eqn:B_alpha_family_formula}
	p_{\theta}(x) = \big[q(x)^{\alpha - 1} + F(\theta) + \theta^top f(x)\big]^{\frac{1}{\alpha-1}}>0 \quad\text{for } x\in\mathbb{S},
	\end{eqnarray}
	for some $F:\Theta\to\mathbb{R}$ and $\Theta$ is the subset of $\mathbb{R}^k$ for which $p_\theta \in\mathcal{P}$.
\end{definition}
\vspace*{0.2cm}

\begin{remark}
	\label{1rem:B_alpha_family}
	\begin{itemize}
		
		\item[(a)] Observe that $\mathbb{B}^{(\alpha)}$-family is a special case of the family $\mathcal{F}_{[\beta h]}$ in \cite[Eq.~(28)]{CsiszarM12J} with
		$h= q$ and $\beta(\cdot, t) = \frac{1}{\alpha -1}[t^{\alpha}-\alpha t +\alpha -t]$.
		\vspace*{0.2cm}
		
		\item[(b)] The family
		depends on the reference measure $q$  only in a loose manner in the sense that any other member of the family can play the role of $q$. The change of reference measure only corresponds to a translation of the parameter space.
		(This fact is true for the $\mathbb{M}^{(\alpha)}$-family \cite[Prop. 22]{KumarS15J2}.)
	\end{itemize}
\end{remark}
\vspace*{0.2cm}

The following theorem and its corollary together establish an ``orthogonality" relationship between the 
$\mathbb{B}^{(\alpha)}$-family and the associated linear family.
\vspace*{0.2cm}

\begin{theorem}
	\label{1thm:orthogonality1}
	Let $\alpha\in (0,1)$. Consider a $\mathbb{B}^{(\alpha)}$-family as in Definition \ref{1defn:twin_alpha_powerlaw_family} and let $\mathbb{L}$ be the corresponding linear family determined by the same functions $f_i, i\in\{1,\ldots,k\}$ and some constants $a_i,i\in\{1,\ldots,k\}$ as in (\ref{1eqn:linear_family}). If
	$p^*$ is the forward $B_\alpha$-projection of $q$ on $\mathbb{L}$ then we have the following:
	\begin{enumerate}
		\item[(a)] $\mathbb{L}\cap \text{cl}(\mathbb{B}^{(\alpha)}) = \{ p^*\}$ and
		\begin{eqnarray*}
			\label{1eqn:pythagorean_equality1}
			B_\alpha (p,q) = B_\alpha (p,p^*) + B_\alpha(p^*,q)\quad \forall p\in \mathbb{L}.
		\end{eqnarray*}
		\item[(b)] Further, if $\text{Supp}(\mathbb{L}) = \mathbb{S}$, then $\mathbb{L}\cap\mathbb{B}^{(\alpha)}=\{ p^*\}$.
	\end{enumerate}
\end{theorem}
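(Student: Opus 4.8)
The plan is to treat the two assertions of part (a) separately. The Pythagorean equality (\ref{1eqn:pythagorean_equality1}) is nothing but part (a) of the theorem following Definition \ref{1defn:linear_family}, since here $\alpha\in(0,1)$, so I would simply invoke it. The genuinely new content is the orthogonality statement $\mathbb{L}\cap\text{cl}(\mathbb{B}^{(\alpha)})=\{p^*\}$, which splits into existence ($p^*$ lies in the intersection) and uniqueness (nothing else does). For existence, Theorem \ref{1thm:forward_projection}(a) already gives $p^*(x)=[q(x)^{\alpha-1}+F+\theta^Tf(x)]^{1/(\alpha-1)}$ for $x\in\text{Supp}(\mathbb{L})$, which is exactly the functional form (\ref{1eqn:B_alpha_family_formula}) of a member of $\mathbb{B}^{(\alpha)}$; combined with the support identity $\text{Supp}(p^*)=\text{Supp}(\mathbb{L})$ supplied by Theorem \ref{1thm:pythagorean_theorem} (valid because $\alpha<1$), this exhibits $p^*$ as an element of $\text{cl}(\mathbb{B}^{(\alpha)})$. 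Since $p^*\in\mathbb{L}$ by definition of the forward projection, $p^*\in\mathbb{L}\cap\text{cl}(\mathbb{B}^{(\alpha)})$.

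For uniqueness I would first record that $p\mapsto B_\alpha(p,q)$ is strictly convex on $\mathcal{P}$: from (\ref{1eqn:partial_derivative}) one computes $\partial^2 B_\alpha(p,q)/\partial p(x)^2=\alpha\,p(x)^{\alpha-2}>0$ for $\alpha\in(0,1)$, so $B_\alpha(\cdot,q)$ is a sum of strictly convex functions of the coordinates $p(x)$. Consequently the forward $B_\alpha$-projection of $q$ on the convex set $\mathbb{L}$ is unique. The idea is then to show that \emph{every} $r\in\mathbb{L}\cap\text{cl}(\mathbb{B}^{(\alpha)})$ is itself a forward projection of $q$ on $\mathbb{L}$, whence $r=p^*$. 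Being in $\text{cl}(\mathbb{B}^{(\alpha)})$, such an $r$ carries the power-law form $r(x)^{\alpha-1}=q(x)^{\alpha-1}+F+\theta^Tf(x)$ on $\text{Supp}(r)$; this is precisely the stationarity condition derived in the proof of Theorem \ref{1thm:forward_projection}. Using Remark \ref{1rem:B_alpha_family}(b) I would view $r$ as the forward projection of $q$ onto the linear family determined by the same $f_i$ and the constants $\sum_x r(x)f_i(x)$; since $r\in\mathbb{L}$, these constants equal the $a_i$, so that linear family is $\mathbb{L}$ itself. Strict convexity (feasibility plus stationarity being sufficient for a convex program) then forces $r=p^*$.

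Part (b) is the clean special case: when $\text{Supp}(\mathbb{L})=\mathbb{S}$, any $r\in\mathbb{L}\cap\mathbb{B}^{(\alpha)}$ automatically has full support and the power-law form on all of $\mathbb{S}$, so the argument above applies verbatim and yields $\mathbb{L}\cap\mathbb{B}^{(\alpha)}=\{p^*\}$. I expect the main obstacle to be the support bookkeeping in part (a): a point of $\text{cl}(\mathbb{B}^{(\alpha)})$ may have support strictly smaller than $\text{Supp}(\mathbb{L})$, and then stationarity on $\text{Supp}(r)$ alone does not obviously certify global optimality. The resolution I would use is the $\alpha<1$ ``infinite slope'' of $B_\alpha(\cdot,q)$ at the boundary --- the same phenomenon that makes the left-hand side of (\ref{1eqn:derivative_calculation}) diverge to $-\infty$ in Theorem \ref{1thm:pythagorean_theorem} --- which forces any minimizer over $\mathbb{L}$ to be supported on the whole of $\text{Supp}(\mathbb{L})$; hence a feasible $r$ with reduced support cannot be optimal, and every admissible $r$ in fact has $\text{Supp}(r)=\text{Supp}(\mathbb{L})$, closing the gap.
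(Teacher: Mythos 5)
Your overall architecture (Pythagorean equality quoted from the earlier theorem, then existence plus uniqueness of the point of $\mathbb{L}\cap \text{cl}(\mathbb{B}^{(\alpha)})$) matches the paper's, but the existence half has a genuine gap. You claim that the formula $p^*(x)=[q(x)^{\alpha-1}+F+\theta^Tf(x)]^{1/(\alpha-1)}$ on $\text{Supp}(\mathbb{L})$, together with $\text{Supp}(p^*)=\text{Supp}(\mathbb{L})$, ``exhibits $p^*$ as an element of $\text{cl}(\mathbb{B}^{(\alpha)})$.'' When $\text{Supp}(\mathbb{L})=\mathbb{S}$ this is fine: $p^*$ is then strictly positive on $\mathbb{S}$ and literally of the form (\ref{1eqn:B_alpha_family_formula}), so it lies in $\mathbb{B}^{(\alpha)}$ itself --- that is part (b). But when $\text{Supp}(\mathbb{L})\subsetneq\mathbb{S}$, members of $\mathbb{B}^{(\alpha)}$ are by definition strictly positive on all of $\mathbb{S}$, whereas $p^*$ vanishes off $\text{Supp}(\mathbb{L})$; since $1/(\alpha-1)<0$, no finite choice of parameters produces a zero value, so $p^*\notin\mathbb{B}^{(\alpha)}$, and membership in the closure requires exhibiting a sequence of fully supported members converging to $p^*$. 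The power-law form on a proper subset of $\mathbb{S}$ does not supply such a sequence. This is exactly where the paper does its real work: it perturbs the constraint levels to get linear families $\mathbb{L}_n$ with constants $(1-\tfrac1n)a_i+\tfrac1n\sum_xq(x)f_i(x)$, so that $\text{Supp}(\mathbb{L}_n)=\mathbb{S}$, observes that the projections $p_n^*$ on $\mathbb{L}_n$ then do lie in $\mathbb{B}^{(\alpha)}$, and extracts a convergent subsequence whose limit lies in $\mathbb{L}\cap\text{cl}(\mathbb{B}^{(\alpha)})$. Some such construction is indispensable; without it your existence claim is an assertion, not a proof.

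A second, smaller problem is the circularity in your support bookkeeping: from ``a feasible $r$ with reduced support cannot be optimal'' you conclude ``every admissible $r$ has $\text{Supp}(r)=\text{Supp}(\mathbb{L})$,'' but optimality of $r$ is precisely what you are trying to establish, so non-optimality cannot be used to exclude reduced-support points of $\mathbb{L}\cap\text{cl}(\mathbb{B}^{(\alpha)})$. The paper's step avoids this entirely: writing $p_m\to r$ with $p_m(x)^{\alpha-1}=q(x)^{\alpha-1}+F_m+\theta_m^Tf(x)$ and using the linear constraints gives $\sum_x[p(x)-r(x)][p_m(x)^{\alpha-1}-q(x)^{\alpha-1}]=0$ for every $p\in\mathbb{L}$ and every $m$; if $r(x)=0<p(x)$ for some $x$ the left-hand side tends to $+\infty$ (all terms with $r(y)=0$ have nonnegative coefficient $p(y)$ and $p_m(y)^{\alpha-1}\to\infty$, the rest stay bounded), a contradiction, and otherwise it passes to the limit and yields the Pythagorean identity for $r$, whence $r=p^*$ by your (correct) strict convexity observation. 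That one computation replaces both your ``stationarity certifies optimality'' step and your infinite-slope repair, and is the route you should take for uniqueness; the missing idea you still need is the approximation argument for existence.
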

\vspace*{0.2cm}

\begin{proof}
	By Theorem \ref{1thm:forward_projection}, the forward $B_\alpha$-projection $p^*$ of $q$ on $\mathbb{L}$ is in $\mathbb{B}^{(\alpha)}$. This implies that $p^*\in \mathbb{L}\cap \mathbb{B}^{(\alpha)}$. Hence it
	suffices to prove the following:
	\begin{enumerate}
		\item[(i)] Every $\tilde{p}\in \mathbb{L}\cap \text{cl}(\mathbb{B}^{(\alpha)})$ satisfies (\ref{1eqn:pythagorean_equality}) with $\tilde{p}$ in place of $p^*$.
		\item[(ii)] $\mathbb{L}\cap \text{cl}(\mathbb{B}^{(\alpha)})$ is non-empty.
	\end{enumerate}
	
	We now proceed to prove both (i) and (ii).
	\vspace*{0.2cm}
	
	(i) Let $\tilde{p}\in \mathbb{L}\cap \text{cl}(\mathbb{B}^{(\alpha)})$. As $\tilde{p}\in\text{cl}(\mathbb{B}^{(\alpha)})$, this implies that there exists a sequence $\{p_n\}\subset\mathbb{B}^{(\alpha)}$ such that $p_n\rightarrow \tilde{p}$ as $n\to\infty$. Since $p_n\in \mathbb{B}^{(\alpha)}$, we can write
	\begin{equation}
	\label{1eqn:P_n}
	p_n(x)^{\alpha-1} = q(x)^{\alpha -1} + F_n + \theta_{n}^{T} f(x)\quad\forall x\in \mathbb{S}
	\end{equation}
	for some constants $\theta_{n} := [\theta_{n}^{(1)},\dots,\theta_{n}^{(k)}]^top\in \mathbb{R}^k$
	and $F_n$. Now for any $p\in \mathbb{L}$ we have, from the definition of linear family, $\sum_{x\in \mathbb{S}}p(x)f_i(x) = a_i, i\in\{1,\ldots,k\}$. Since $\tilde{p}\in \mathbb{L}$, we also have
	$\sum\limits_{x\in \mathbb{S}}\tilde{p} (x)f_i(x) = a_i, i\in\{1,\ldots,k\}$. Multiplying both sides of (\ref{1eqn:P_n}) by $p$ and $\tilde{p}$ separately, we get
	\begin{equation*}
	\sum\limits_{x\in \mathbb{S}} p(x) p_n(x)^{\alpha-1} = \sum\limits_{x\in \mathbb{S}}p(x)q(x)^{\alpha -1} + F_n + \sum\limits_{i=1}^k\theta_n^{(i)}a_i
	\end{equation*}
	and
	\begin{equation*}
	\sum\limits_{x\in \mathbb{S}} \tilde{p}(x) p_n(x)^{\alpha-1} = \sum\limits_{x\in \mathbb{S}}\tilde{p} (x)q(x)^{\alpha -1} + F_n + \sum\limits_{i=1}^k\theta_n^{(i)}a_i.
	\end{equation*}
	Combining the above two equations, we get
	\begin{equation*}
	\sum\limits_{x\in \mathbb{S}}\big[p(x) -\tilde{p}(x)\big]\big[p_n(x)^{\alpha-1} -q(x)^{\alpha-1}
	\big] = 0.
	\end{equation*}
	As $n\to\infty$, the above becomes
	\begin{equation*}
	\sum\limits_{x\in \mathbb{S}}\big[p(x) -\tilde{p}(x)\big]\big[\tilde{p}(x)^{\alpha-1} - q(x)^{\alpha-1}\big] = 0,
	\end{equation*}
	which is equivalent to (\ref{1eqn:pythagorean_equality}).
	\vspace*{0.2cm}
	
	(ii) Let $p_n^*$ be the forward $B_\alpha$-projection of $q$ on the linear family
	\begin{equation*}
	\mathbb{L}_n :=
	\Big\{p:\sum\limits_{x\in \mathbb{S}} p(x)f_i(x) = \Big(1-\frac{1}{n}\Big)a_i+\frac{1}{n}\sum\limits
	_{x\in \mathbb{S}} q(x)f_i(x), \quad i\in\{1,\ldots,k\}\Big\}
	\end{equation*}
	(see Figure \ref{1fig:B_alpha_projection}).
	\begin{figure}
		\centering
		\includegraphics[width=6cm, height=5.5cm]{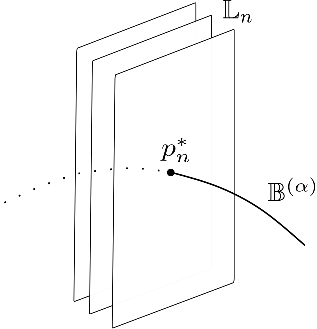}
		\caption{Orthogonality between $\mathbb{B}^{(\alpha)}$-family and
			the linear family $\mathbb{L}_n$.}
		\label{1fig:B_alpha_projection}
	\end{figure}
	By construction $\left(1-\frac{1}{n}\right) p+\frac{1}{n}q\in \mathbb{L}_n$ for any $p\in \mathbb{L}$. Hence, since $\text{Supp}(q) = \mathbb{S}$, we have $\text{Supp}(\mathbb{L}_n) = \mathbb{S}$. Since 
	$\mathbb{L}_n$ is also characterized by the same functions $f_i, i\in\{1,\ldots,k\}$, we have $p_n^*\in \mathbb{B}^{(\alpha)}$ for every $n\in \mathbb{N}$. Hence limit of any convergent sub-sequence of $\{ p_n^*\}$ belongs to $\text{cl}(\mathbb{B}^{(\alpha)})\cap \mathbb{L}$. Thus $\text{cl}(\mathbb{B}^{(\alpha)})\cap \mathbb{L}$ is non-empty. This completes the proof.
\end{proof}

\begin{corollary}
	\label{1cor:orthogonality}
	Let $\alpha\in (0,1)$. Let $\mathbb{L}$ and $\mathbb{B}^{(\alpha)}$ be characterized by the same functions $f_i,i\in\{1,\ldots,k\}$. Then $\mathbb{L}\cap \text{cl}(\mathbb{B}^{(\alpha)}) = \{p^*\}$ and
	\begin{eqnarray}
	\label{1eqn:orthogonality1}
	B_\alpha (p,q) = B_\alpha (p,p^*)+B_\alpha(p^*,q)\quad\forall p\in \mathbb{L},\quad\forall q\in \text{cl}(\mathbb{B}^{(\alpha)}).
	\end{eqnarray}
\end{corollary}
\vspace*{0.2cm}

\begin{proof}
	By Theorem \ref{1thm:orthogonality1}, we have $\mathbb{L}\cap \text{cl}(\mathbb{B}^{(\alpha)}) = \{p^*\}$. In view of Remark \ref{1rem:B_alpha_family}(b), notice that every member of $\mathbb{B}^{(\alpha)}$ has the same projection on $\mathbb{L}$, namely $p^*$. Hence (\ref{1eqn:pythagorean_equality}) holds for every $q\in\mathbb{B}^{(\alpha)}$. Thus we only need to prove (\ref{1eqn:pythagorean_equality}) for every $q\in\text{cl}(\mathbb{B}^{(\alpha)})\setminus\mathbb{B}^{(\alpha)}$. Let 
	$q\in\text{cl}(\mathbb{B}^{(\alpha)})\setminus\mathbb{B}^{(\alpha)}$. There exists $\{q_n\}\subset\mathbb{B}^{(\alpha)}$ such that $q_n\rightarrow q$. Hence for any $p\in \mathbb{L}$, we have
	\begin{equation}
	\label{1eqn:pythagorean_equality_in_sequence_of_p_n}
	B_\alpha (p,q_n) = B_\alpha (p,p^*) + B_\alpha (p^*,q_n)\quad \forall n\in\mathbb{N}.
	\end{equation}
	Since for a fixed $p$, $q\mapsto B_\alpha (p, q)$ is continuous as a function from $\mathcal{P}$ to $[0, \infty]$, taking limit as $n\to\infty$
	on both sides of (\ref{1eqn:pythagorean_equality_in_sequence_of_p_n}), we have 
	\begin{eqnarray*}
		\label{1eqn:orthogonality}
		B_\alpha (p,q) = B_\alpha (p,p^*)+B_\alpha(p^*,q)\quad\forall p\in \mathbb{L},\quad\forall q\in \text{cl}(\mathbb{B}^{(\alpha)}).
	\end{eqnarray*}
	This completes the proof.
\end{proof}
\vspace*{0.2cm}

Theorem \ref{1thm:orthogonality1} does not hold, in general, for $\alpha>1$ as shown in the following example.
\vspace*{0.2cm}

\begin{example}
	\label{1expl:B_alpha_does_not_intersect_L}
	Let $\alpha,\mathbb{S},\mathbb{L}$ and $u$ be as in Example	\ref{1expl:pyth_ineq_strict}. Then the associated
	$\mathbb{B}^{(\alpha)}$-family is given by
	\begin{equation*}
	\mathbb{B}^{(\alpha)} = \{p_\theta : p_\theta(x) = u(x)+ F(\theta) + \theta
	f(x), \quad \forall x\in \mathbb{S}\},
	\end{equation*}
	where $f= [1,-3,-5,-6]^top$, $F(\theta)= \frac{13\theta}{4}$ and $\theta\in (-\frac{1}{17},\frac{1}{11})$. Then we have
	\begin{eqnarray*}
		\mathbb{B}^{(\alpha)} =\big \{p_\theta:\theta\in (-\tfrac{1}{17},\tfrac{1}{11})
		\big\},\\
		\text{cl}(\mathbb{B}^{(\alpha)}) = \big\{p_\theta:\theta\in 
		[-\tfrac{1}{17},\tfrac{1}{11}]\big\},
	\end{eqnarray*}
	where $p_\theta = \big[(\frac{1}{4}+\frac{17\theta}{4}),
	(\frac{1}{4}+\frac{\theta}{4}), (\frac{1}{4}-\frac{7\theta}{4}),
	(\frac{1}{4}-\frac{11\theta}{4})\big]^top$. If $p_\theta\in 
	\text{cl}(\mathbb{B}^{(\alpha)}) \cap \mathbb{L}$ then $\sum\limits_{x\in 
		\mathbb{S}} p_\theta(x) f(x)$ $=0$. This implies $\theta = \frac{13}{115}$, which
	is outside the range of $\theta$. Hence 
	$\text{cl}(\mathbb{B}^{(\alpha)}) \cap \mathbb{L}= \emptyset$.
\end{example}
\vspace*{0.2cm}

The following theorem tells us that a reverse $B_\alpha$-projection on a $\mathbb{B}^{(\alpha)}$-family can be turned into a forward $B_\alpha$-projection on the associated linear family. We shall refer this as the {\em projection theorem} for the $B_\alpha$-divergence. This theorem is analogous to the one for $I$-divergence
\cite[Th. 3.3]{CsiszarS04B}, $\mathscr{I}_\alpha$-divergence
\cite[Th. 18]{KumarS15J2} and $D_\alpha$-divergence \cite[Th. 6]{KumarS16J}.
\vspace*{0.2cm}

\begin{theorem}
	\label{1thm:orthogonality2}
	Let $\alpha\in (0,1)$. Let $X_1^n:=
	(X_1,\dots, X_n)\in \mathbb{S}^n$. Let 
	$p_n$ be the empirical probability measure of $X_1^n$ and let
	\begin{equation}
	\label{1eqn:empirical_linear_family}
	\widehat{\mathbb{L}}_n := \Big\{p\in\mathcal{P} : \sum\limits_{x\in \mathbb{S}} p(x)f_i(x) = \bar{f_i}, \quad i\in\{1,\ldots,k\}\Big\},
	\end{equation}
	where $\bar{f_i} = \frac{1}{n}\sum_{j=1}^n f_i(X_j), i\in\{1,\ldots,k\}$. Let $p^*$ be the forward $B_\alpha$-projection of $q$ on $\widehat{\mathbb{L}}_n$. Then the following hold.
	\begin{enumerate}
		\item[(i)] If $p^*\in \mathbb{B}^{(\alpha)}$, then $p^*$ is the	reverse $B_\alpha$-projection of $p_n$ on $\mathbb{B}^{(\alpha)}$.
		\item[(ii)] If $p^*\notin \mathbb{B}^{(\alpha)}$, then $p_n$ does not have a reverse $B_\alpha$-projection on $\mathbb{B}^{(\alpha)}$. However, $p^*$ is the reverse $B_\alpha$-projection of $p_n$ on \text{cl}$(\mathbb{B}^{(\alpha)})$.
	\end{enumerate}
\end{theorem}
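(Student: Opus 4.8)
The plan is to reduce the reverse $B_\alpha$-projection on $\mathbb{B}^{(\alpha)}$ to a forward $B_\alpha$-projection on $\widehat{\mathbb{L}}_n$ and then read off both cases from the orthogonality relation of Corollary \ref{1cor:orthogonality}. First I would record the elementary but crucial observation that the empirical measure $p_n$ itself lies in the empirical linear family: since $\sum_{x\in\mathbb{S}} p_n(x) f_i(x) = \frac{1}{n}\sum_{j=1}^n f_i(X_j) = \bar{f_i}$ for each $i=1,\dots,k$, the measure $p_n$ satisfies exactly the defining constraints of $\widehat{\mathbb{L}}_n$, so $p_n\in\widehat{\mathbb{L}}_n$. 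Because $\widehat{\mathbb{L}}_n$ is a linear family determined by the same functions $f_i$ that characterize $\mathbb{B}^{(\alpha)}$, and $\alpha\in(0,1)$, Corollary \ref{1cor:orthogonality} applies verbatim: $\widehat{\mathbb{L}}_n\cap\text{cl}(\mathbb{B}^{(\alpha)})=\{p^*\}$, and
\[
B_\alpha(p,q') = B_\alpha(p,p^*) + B_\alpha(p^*,q') \quad\text{for all } p\in\widehat{\mathbb{L}}_n,\ q'\in\text{cl}(\mathbb{B}^{(\alpha)}).
\]
Specializing $p=p_n$ and letting $q'=p_\theta$ range over $\mathbb{B}^{(\alpha)}$ yields the identity
\[
B_\alpha(p_n,p_\theta) = B_\alpha(p_n,p^*) + B_\alpha(p^*,p_\theta),\qquad p_\theta\in\mathbb{B}^{(\alpha)},
\]
which is the engine driving the whole theorem.

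From this identity both cases follow with little more than nonnegativity and continuity of $B_\alpha$. For part (i), if $p^*\in\mathbb{B}^{(\alpha)}$, then since $B_\alpha(p^*,p_\theta)\geq 0$ with equality precisely when $p_\theta=p^*$, the right-hand side is minimized over $\mathbb{B}^{(\alpha)}$ exactly at $p_\theta=p^*$; hence $p^*$ is the reverse $B_\alpha$-projection of $p_n$ on $\mathbb{B}^{(\alpha)}$. For part (ii), if $p^*\notin\mathbb{B}^{(\alpha)}$, the same identity gives $\inf_{p_\theta\in\mathbb{B}^{(\alpha)}} B_\alpha(p_n,p_\theta) = B_\alpha(p_n,p^*) + \inf_{p_\theta\in\mathbb{B}^{(\alpha)}} B_\alpha(p^*,p_\theta)$. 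I would then use that $p^*\in\text{cl}(\mathbb{B}^{(\alpha)})$ (guaranteed by Theorem \ref{1thm:orthogonality1}) to choose a sequence $p_{\theta_m}\in\mathbb{B}^{(\alpha)}$ with $p_{\theta_m}\to p^*$; continuity of $q'\mapsto B_\alpha(p^*,q')$ forces $B_\alpha(p^*,p_{\theta_m})\to 0$, so the residual infimum is $0$. Thus the reverse-projection infimum equals $B_\alpha(p_n,p^*)$, is not attained inside $\mathbb{B}^{(\alpha)}$ (attainment would require $p_\theta=p^*$, which is excluded), but is attained at $p^*$ over the closure $\text{cl}(\mathbb{B}^{(\alpha)})$.

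The one point demanding care — and the main obstacle — is the implicit claim that the single measure $p^*$, defined as the forward projection of the \emph{fixed} reference $q$, simultaneously serves as the forward projection of \emph{every} member $p_\theta$ of the family, so that the Pythagorean identity can be applied with $p_\theta$ in the second slot. This is exactly the content packaged into Corollary \ref{1cor:orthogonality}, which in turn rests on Remark \ref{1rem:B_alpha_family}(b): changing the reference measure within a $\mathbb{B}^{(\alpha)}$-family only translates the parameter space, so all members share the common forward projection $p^*$ on $\widehat{\mathbb{L}}_n$. I would therefore invoke the corollary directly rather than re-derive the orthogonality, and confine the remaining work to the short limiting argument in case (ii).
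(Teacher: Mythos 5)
Your proposal is correct and follows essentially the same route as the paper: both note that $p_n\in\widehat{\mathbb{L}}_n$ by construction, invoke Corollary \ref{1cor:orthogonality} to obtain the Pythagorean identity $B_\alpha(p_n,q')=B_\alpha(p_n,p^*)+B_\alpha(p^*,q')$ for all $q'\in\text{cl}(\mathbb{B}^{(\alpha)})$, and then read off both cases from nonnegativity of the second term. Your explicit limiting argument in case (ii) merely spells out what the paper states tersely (that the infimum over $\mathbb{B}^{(\alpha)}$ coincides with that over its closure), so there is no substantive difference.
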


\begin{proof}
	Let us first observe that $\widehat{\mathbb{L}}_n$ is constructed so that $p_n\in\widehat{\mathbb{L}}_n$. Since the families $\widehat{\mathbb{L}}_n$ and $\mathbb{B}^{(\alpha)}$ are defined by the same functions $f_i$, $i\in\{1,\dots,k\}$, by Corollary \ref{1cor:orthogonality}, we have $\widehat{\mathbb{L}}\cap \text{cl}(\mathbb{B}^{(\alpha)}) = \{p^*\}$ and 
	\begin{equation*}
	\label{1eqn:pythagorean_equality2}
	B_\alpha(p_n,q) = B_\alpha (p_n,p^*) + B_\alpha(p^*,q) \quad\forall q\in \text{cl}(\mathbb{B}^{(\alpha)}).
	\end{equation*}
	Hence it is clear that the minimizer of $B_\alpha(p_n, q)$ over $q\in\text{cl}(\mathbb{B}^{(\alpha)})$ is same as the minimizer of $B_\alpha (p^* , q)$ over 
	$q\in\text{cl}(\mathbb{B}^{(\alpha)})$ (Notice that this statement is also true with $\text{cl}(\mathbb{B}^{(\alpha)})$ replaced by $\mathbb{B}^{(\alpha)}$). But $B_\alpha (p^* , q)$ over $q\in\text{cl}(\mathbb{B}^{(\alpha)})$ is uniquely minimized by $q = p^*$. Hence if $p^*\notin \mathbb{B}^{(\alpha)}$, since minimum value of $B_\alpha(p_n, q)$ over $q\in\text{cl}(\mathbb{B}^{(\alpha)})$ is same as that of $B_\alpha(p_n, q)$ over $q\in\mathbb{B}^{(\alpha)}$, the later is not attained on $\mathbb{B}^{(\alpha)}$.
\end{proof}

\begin{remark}
	\label{1rem:projection_B_alpha_alpha_greater_1}
	Theorems \ref{1thm:orthogonality1}, \ref{1thm:orthogonality2}, and Corollary \ref{1cor:orthogonality} continue to hold for $\alpha >1$ as well if attention is restricted to probability measures with strictly positive components and the existence of $p^*$ is guaranteed.
\end{remark}

\bibliographystyle{IEEEtranS}
\bibliography{myjmva}

\begin{thebibliography}{10}
\providecommand{\url}[1]{#1}
\csname url@samestyle\endcsname
\providecommand{\newblock}{\relax}
\providecommand{\bibinfo}[2]{#2}
\providecommand{\BIBentrySTDinterwordspacing}{\spaceskip=0pt\relax}
\providecommand{\BIBentryALTinterwordstretchfactor}{4}
\providecommand{\BIBentryALTinterwordspacing}{\spaceskip=\fontdimen2\font plus
\BIBentryALTinterwordstretchfactor\fontdimen3\font minus
  \fontdimen4\font\relax}
\providecommand{\BIBforeignlanguage}[2]{{%
\expandafter\ifx\csname l@#1\endcsname\relax
\typeout{** WARNING: IEEEtranS.bst: No hyphenation pattern has been}%
\typeout{** loaded for the language `#1'. Using the pattern for}%
\typeout{** the default language instead.}%
\else
\language=\csname l@#1\endcsname
\fi
#2}}
\providecommand{\BIBdecl}{\relax}
\BIBdecl

\bibitem{AteyaM13J}
S.~F. Ateya and E.~A. Madhagi, ``On multivariate truncated generalized cauchy
  distribution,'' \emph{Stat. Papers}, vol.~54, pp. 879--897, 2013.

\bibitem{Barnett66J1}
V.~D. Barnett, ``Evaluation of the maximum-likelihood estimator where the
  likelihood equation has multiple roots,'' \emph{Biometrika}, vol.~53, pp.
  151--165, 1966.

\bibitem{Bashkirov04J}
A.~G. Bashkirov, ``On maximum entropy principle, superstatistics, power-law
  distribution and r\'enyi parameter,'' \emph{Phys. A.}, vol. 340, pp.
  153--162, 2004.

\bibitem{BasuBC97J}
A.~Basu, S.~Basu, and G.~Chaudhury, ``Robust minimum divergence procedures for
  count data models,'' \emph{Sankhya: The Indian Journal of Statistic},
  vol.~59, pp. 11--27, 1997.

\bibitem{BasuHHJ98J}
A.~Basu, I.~R. Harris, N.~L. Hjort, and M.~C. Jones, ``Robust and efficient
  estimation by minimizing a density power divergence,'' \emph{Biometrika},
  vol.~85, pp. 549--559, 1998.

\bibitem{BasuSP11B}
A.~Basu, H.~Shioya, and C.~Park, \emph{Statistical Inference: The Minimum
  Distance Approach}.\hskip 1em plus 0.5em minus 0.4em\relax Chapman \& Hall/
  CRC Monographs on Statistics and Applied Probability 120, 2011.

\bibitem{Beran77J}
R.~Beran, ``Minimum hellinger distance estimates for parametric models,''
  \emph{Ann. Statist.}, vol.~5, pp. 445--463, 1977.

\bibitem{Bertsekas03B}
D.~P. Bertsekas, \emph{Nonlinear Programming}.\hskip 1em plus 0.5em minus
  0.4em\relax 2nd ed. Belmont, MA: Athena Scientific, 2003.

\bibitem{Bhattacharyya43J}
A.~Bhattacharyya, ``On a measure of divergence between two statistical
  populations defined by their probability distributions,'' \emph{Bull.
  Calcutta Math. Soc}, vol.~35, pp. 99--109, 1943.

\bibitem{Broniatowski14J}
M.~Broniatowski, ``Minimum divergence estimators, maximum likelihood and
  exponential families,'' \emph{Stat. Probabil. Lett.}, vol.~93, pp. 27--33,
  2014.

\bibitem{BroniatowskiK09J}
M.~Broniatowski and A.~Kezioub, ``Parametric estimation and tests through
  divergences and the duality technique,'' \emph{J. Multivar. Anal.}, vol. 100,
  pp. 16--36, 2009.

\bibitem{BroniatowskiTV12J}
M.~Broniatowski, A.~Toma, and I.~Vajda, ``Decomposable pseudo-distances and
  applications in statistical estimation,'' \emph{J. Statist. Plann.
  Inference}, vol. 142, pp. 2574--2585, 2012.

\bibitem{BroniatowskiV12J}
M.~Broniatowski and I.~Vajda, ``Several applications of divergence criteria in
  continuous families,'' \emph{Kybernetika (Prague)}, vol.~48, pp. 600--636,
  2012.

\bibitem{Brown86B}
L.~D. Brown, \emph{Fundamentals of Statistical Exponential Families: with
  Applications in Statistical Decision Theory}.\hskip 1em plus 0.5em minus
  0.4em\relax Institute of Mathematical Statistics, Hayward, CA., 1986.

\bibitem{BunteL14J}
C.~Bunte and A.~Lapidoth, ``Encoding tasks and r\'enyi entropy,'' \emph{IEEE
  Trans. Inform. Theory}, vol.~60, pp. 5065--5076, 2014.

\bibitem{Principe10B}
J.~c~Principe, \emph{Information Theoretic Learning: Renyi's Entropy and Kernel
  Perspectives}.\hskip 1em plus 0.5em minus 0.4em\relax Springer Science and
  Business Media, New York, USA., 2010.

\bibitem{CaoCF95J}
R.~Cao, A.~Cuevas, and R.~Fraiman, ``Minimum distance density-based
  estimation,'' \emph{Comput. Statist. Data Anal}, vol.~20, pp. 2555--2563,
  1995.

\bibitem{CichockiA10J}
A.~Cichocki and S.~Amari, ``Families of alpha-beta-and gamma-divergences:
  Flexible and robust measure of similarities,'' \emph{Entropy}, vol.~12, pp.
  1532--1568, 2010.

\bibitem{CressieR84J}
N.~Cressie and T.~R.~C. Read, ``Multinomial goodness-of-fit tests,'' \emph{J.
  R. Stat. Soc. Ser. B. Stat. Methodol.}, vol.~46, pp. 440--464, 1984.

\bibitem{Csiszar75J}
I.~Csisz\'{a}r, ``I-divergence geometry of probability distributions and
  minimization problems,'' \emph{Ann. Probab.}, vol.~3, pp. 146--158, 1975.

\bibitem{Csiszar84J}
------, ``Sanov property, generalized i-projection, and a conditional limit
  theorem,'' \emph{Ann. Probab.}, vol.~12, pp. 768--793, 1984.

\bibitem{Csiszar91J}
------, ``Why least squares and maximum entropy? an axiomatic approach to
  inference for linear inverse problems,'' \emph{Ann. Statist.}, vol.~19, pp.
  2032--2066, 1991.

\bibitem{Csiszar95J1}
------, ``Generalized cutoff rates and r\'enyi's information measures,''
  \emph{IEEE Trans. Inform. Theory.}, vol.~41, pp. 26--34, 1995.

\bibitem{Csiszar95J2}
------, ``Generalized projections for non-negative functions,'' \emph{Acta
  Math. Hungar.}, vol.~68, pp. 161--185, 1995.

\bibitem{CsiszarM03J}
I.~Csisz\'{a}r and F.~Mat\'u\v{s}, ``Information projections revisited,''
  \emph{IEEE Trans. Inform. Theory.}, vol.~49, pp. 1474--1490, 2003.

\bibitem{CsiszarM12J}
------, ``Generalized minimizers of convex integral functionals, bergman
  distance, pythagorean identities,'' \emph{Kybernetika (Prague).}, vol.~48,
  pp. 637--689, 2012.

\bibitem{CsiszarS04B}
I.~Csisz\'ar and P.~C. Shields, \emph{Information Theory and Statistics: A
  Tutorial}.\hskip 1em plus 0.5em minus 0.4em\relax Foundations and Trends in
  Communications and Information Theory, 2004.

\bibitem{CsiszarT84J}
I.~Csisz\'{a}r and G.~Tusn\'ady, ``Information geometry and alternating
  minimization procedures,'' \emph{Statistics and Decisions}, vol.~1, pp.
  205--237, 1984.

\bibitem{DempsterLR77J}
A.~P. Dempster, N.~M. Laird, and D.~B. Rubin, ``Maximum likelihood from
  incomplete data via the em algorithm,'' \emph{J. R. Stat. Soc. Ser. B. Stat.
  Methodol.}, vol.~39, pp. 1--38, 1977.

\bibitem{EguchiKK11J}
S.~Eguchi, O.~Komori, and S.~Kato, ``Projective power entropy and maximum
  tsallis entropy distributions,'' \emph{Entropy}, vol.~13, pp. 1746--1764,
  2011.

\bibitem{FieldS94J}
C.~Field and B.~Smith, ``Robust estimation: A weighted maximum likelihood
  approach,'' \emph{Int. Stat. Rev.}, vol.~62, pp. 405--424, 1994.

\bibitem{FujisawaE08J}
H.~Fujisawa and S.~Eguchi, ``Robust parameter estimation with a small bias
  against heavy contamination,'' \emph{J. Multivariate Anal.}, vol.~99, pp.
  2053--2081, 2008.

\bibitem{GayenK18ISIT}
A.~Gayen and M.~A. Kumar, ``Generalized estimating equation for the student-t
  distributions,'' in \emph{2018 IEEE International Symposium on Information
  Theory (ISIT)}, 2018, pp. 571--575.

\bibitem{GhoshdastidarDB12ISIT}
D.~Ghoshdastidar, A.~Dukkipati, and S.~Bhatnagar, ``q-gaussian based smoothed
  functional algorithms for stochastic optimization,'' in \emph{2012 IEEE
  International Symposium on Information Theory (ISIT)}, 2012, pp. 1059--1063.

\bibitem{HoggCM13B}
R.~V. Hogg, A.~Craig, and J.~W. Mckean, \emph{Introduction to Mathematical
  Statistics}.\hskip 1em plus 0.5em minus 0.4em\relax Pearson, 6th ed., 2013.

\bibitem{Jaynes83J}
E.~T. Jaynes, ``Papers on probability, statistics and statistical physics,''
  \emph{R. D. Rosenkrantz, Ed. P.O. Box 17,3300 AA Dordrecht, The Netherlands:
  Kluwer Academic Publishers}, 1983.

\bibitem{JohnsonV07J}
O.~Johnson and C.~Vignat, ``Some results concerning maximum r\'enyi entropy
  distributions,'' \emph{Ann. Inst. Henri Poincar\'e Probab. Stat.}, vol.~43,
  pp. 339--351, 2007.

\bibitem{JonesHHB01J}
M.~C. Jones, N.~L. Hjort, I.~R. Harris, and A.~Basu, ``A comparison of related
  density based minimum divergence estimators,'' \emph{Biometrika}, vol.~88,
  pp. 865--873, 2001.

\bibitem{Kanamori14J}
T.~Kanamori, ``Scale-invariant divergences for density functions,''
  \emph{Entropy}, vol.~16, pp. 2611--2628, 2014.

\bibitem{KarthikS17J}
P.~N. Karthik and R.~Sundaresan, ``On the equivalence of projections in
  relative $\alpha$-entropy and r\'enyi divergence,'' in \emph{Twenty Fourth
  National Conference on Communications (NCC)}, 2018, pp. 1--6.

\bibitem{KumarS16J}
M.~A. Kumar and I.~Sason, ``Projection theorems for the r\'enyi divergence on
  alpha-convex sets,'' \emph{IEEE Trans. Inform. Theory.}, vol.~62, pp.
  4924--4935, 2016.

\bibitem{KumarS15J1}
M.~A. Kumar and R.~Sundaresan, ``Minimization problems based on relative
  $\alpha$-entropy~i: Forward projection,'' \emph{IEEE Trans. Inform. Theory.},
  vol.~61, pp. 5063--5080, 2015.

\bibitem{KumarS15J2}
------, ``Minimization problems based on relative $\alpha$-entropy~ii: Reverse
  projection,'' \emph{IEEE Trans. Inform. Theory.}, vol.~61, pp. 5081--5095,
  2015.

\bibitem{LehmannC98B}
E.~L. Lehmann and G.~Casella, \emph{Theory of point estimation}.\hskip 1em plus
  0.5em minus 0.4em\relax Springer, 2nd ed., 1998.

\bibitem{LiuR95J}
C.~Liu and D.~B. Rubin, ``Ml estimation of the t-distribution using em and its
  extensions, ecm and ecme,'' \emph{Statist. Sinica.}, vol.~5, pp. 19--39,
  1995.

\bibitem{LutwakYZ05J}
E.~Lutwak, D.~Yang, and G.~Zhang, ``Cram\'er-rao and moment-entropy
  inequalities for r\'enyi entropy and generalized fisher information,''
  \emph{IEEE Trans. Inform. Theory.}, vol.~51, pp. 473--478, 2005.

\bibitem{MajiGB16J}
A.~Maji, A.~Ghash, and A.~Basu, ``The logarithmic super divergence and
  asymptotic inference properties,'' \emph{AStA Adv. Stat. Anal}, vol. 100, pp.
  99--131, 2016.

\bibitem{Matsuzoe17J}
H.~Matsuzoe, ``A sequence of escort distributions and generalizations of
  expectations on $q$-exponential family,'' \emph{Entropy}, vol.~19, pp. 1--13,
  2017.

\bibitem{MinamiE02J}
M.~Minami and S.~Eguchi, ``Robust blind source separation by beta-divergence,''
  \emph{Neural Comput.}, vol.~14, pp. 1859--1886, 2002.

\bibitem{Mohamad18J}
D.~A. Mohamad, ``Towards a better understanding of the dual representation of
  phi divergences,'' \emph{Stat. Pap.}, vol.~59, pp. 1205--1253, 2018.

\bibitem{Naudts04J}
J.~Naudts, ``Estimators, escort probabilities, and $\phi$-exponential families
  in statistical physics,'' \emph{J. Inequal. Pure. Appl. Math.}, vol.~5, p.
  102, 2004.

\bibitem{OharaW10J}
A.~Ohara and T.~Wada, ``Information geometry of $q$-gaussian densities and
  behaviors of solutions to related diffusion equations,'' \emph{J. Phys. A:
  Math. Theor.}, vol.~43, p. 035002, 2010.

\bibitem{Pardo06B}
L.~Pardo, \emph{Statistical Inference Based on Divergence Measures}.\hskip 1em
  plus 0.5em minus 0.4em\relax Chapman \& Hall/CRC, Taylor and Francis group,
  Boca Raton, Florida, USA, 2006.

\bibitem{PatraMPB13J}
S.~Patra, A.~Maji, L.~Pardo, and A.~Basu, ``The power divergence and the
  density power divergence families: the mathematical connection,''
  \emph{Sankhya B}, vol.~75, pp. 16--28, 2013.

\bibitem{PratoT99J}
D.~Prato and C.~Tsallis, ``Nonextensive foundation of l\'evy distributions,''
  \emph{Physical review. E, Statistical physics, plasmas, fluids, and related
  interdisciplinary topics}, vol.~60, pp. 2398--2401, 1999.

\bibitem{Renyi61J}
A.~R\'{e}nyi, ``On measures of entropy and information,'' in \emph{Proceedings
  of 4th Berkeley Symposium on Mathematical Statistics and Probability,
  Berkeley, California, USA}, 1961, pp. 547--561.

\bibitem{Rider57J}
P.~R. Rider, ``Generalized cauchy distribution,'' \emph{Ann. Inst. Statist.
  Math}, vol.~9, pp. 215--223, 1957.

\bibitem{Simpson89J}
D.~G. Simpson, ``Hellinger deviation test: Efficiency, breakdown points, and
  examples,'' \emph{J. Amer. Statist. Assoc.}, vol.~84, pp. 107--113, 1989.

\bibitem{Sundaresan02ISIT}
R.~Sundaresan, ``A measure of discrimination and its geometric properties,'' in
  \emph{2002 IEEE International Symposium on Information Theory (ISIT)}, 2002,
  p. 264.

\bibitem{Sundaresan07J}
------, ``Guessing under source uncertainty,'' \emph{IEEE Trans. Inform.
  Theory.}, vol.~53, pp. 269--287, 2007.

\bibitem{TamuraB86J}
R.~N. Tamura and D.~D. Boos, ``Minimum hellinger distance estimation for
  multivariate location and covariance,'' \emph{J. Amer. Statist. Assoc.},
  vol.~81, pp. 223--229, 1986.

\bibitem{TomaB11J}
A.~Toma and M.~Broniatowski, ``Dual divergence estimators and tests: Robustness
  results,'' \emph{J. Multivar. Anal.}, vol. 102, pp. 20--36, 2011.

\bibitem{Tsallis88J}
C.~Tsallis, ``Possible generalization of bolzmann-gibbs statistics,'' \emph{J.
  Stat. Phys.}, vol.~52, pp. 479--487, 1988.

\bibitem{Tsallis09B}
------, \emph{Introduction to non-extensive statistical mechanics: Approaching
  a complex world}.\hskip 1em plus 0.5em minus 0.4em\relax Springer. New York.,
  2009.

\bibitem{TsallisMP98J}
C.~Tsallis, R.~S. Mendes, and A.~R. Plastino, ``The role of constraints within
  generalized non-extensive statistics,'' \emph{Phys. A.}, vol. 261, pp.
  534--554, 1998.

\bibitem{VignatP07J}
C.~Vignat and A.~Plastino, ``Central limit theorem and deformed exponentials,''
  \emph{J. Phys. A.}, vol.~40, pp. F969--F978, 2007.

\bibitem{Wigner58J}
E.~P. Wigner, ``On the distribution of the roots of certain symmetric
  matrices,'' \emph{Ann. Math.}, vol.~67, pp. 325--327, 1958.

\bibitem{Windham95J}
M.~P. Windham, ``Robustifying model fitting,'' \emph{J. R. Stat. Soc. Ser. B.
  Stat. Methodol.}, vol.~57, pp. 599--609, 1995.

\end{thebibliography}
\end{document}